    \newcommand{\BC}{{\mathbb {C}}} 
     \newcommand{\BF}{{\mathbb {F}}}
    \newcommand{\BQ}{{\mathbb {Q}}} \newcommand{\BR}{{\mathbb {R}}}
     \newcommand{\BT}{{\mathbb {T}}}
     \newcommand{\BZ}{{\mathbb {Z}}}
    \newcommand{\CC}{{\mathcal {C}}} 
    \newcommand{\CE}{{\mathcal {E}}} 
     \newcommand{\CH}{{\mathcal {H}}}
     \newcommand{\CJ}{{\mathcal {J}}}
    \newcommand{\CM}{{\mathcal {M}}} 
    \newcommand{\CO}{{\mathcal {O}}} \newcommand{\CP}{{\mathcal {P}}}
     \newcommand{\CR}{{\mathcal {R}}}
     \newcommand{\CT}{{\mathcal {T}}}
    \newcommand{\fa}{{\mathfrak{a}}} \newcommand{\fb}{{\mathfrak{b}}}
     \newcommand{\ff}{{\mathfrak{f}}}
    \newcommand{\fm}{{\mathfrak{m}}} \newcommand{\fn}{{\mathfrak{n}}}
    \newcommand{\fq}{{\mathfrak{q}}}
      \newcommand{\fB}{{\mathfrak{B}}}
     \newcommand{\fP}{{\mathfrak{P}}}
     \newcommand{\fX}{{\mathfrak{X}}}
    \newcommand{\Ann}{{\mathrm{Ann}}}
    \newcommand{\codim}{{\mathrm{codim}}}
    \newcommand{\coker}{{\mathrm{coker}}}
    \newcommand{\Div}{{\mathrm{Div}}} 
    \newcommand{\End}{{\mathrm{End}}}
    \newcommand{\Frob}{{\mathrm{Frob}}}
    \newcommand{\Gal}{{\mathrm{Gal}}} \newcommand{\GL}{{\mathrm{GL}}}
    \newcommand{\Hom}{{\mathrm{Hom}}}
    \newcommand{\id}{{\mathrm{id}}}\renewcommand{\Im}{{\mathrm{Im}}}
    \newcommand{\Res}{{\mathrm{Res}}}
    \newcommand{\SL}{{\mathrm{SL}}}
    \newcommand{\tr}{{\mathrm{tr}}}
    \theoremstyle{plain}
    \newtheorem{thm}{Theorem}[section] \newtheorem{cor}[thm]{Corollary}
    \newtheorem{lem}[thm]{Lemma}  \newtheorem{prop}[thm]{Proposition}
     \newtheorem{defn}[thm]{Definition}
\theoremstyle{remark} \newtheorem{remark}[thm]{Remark}
\theoremstyle{remark} 
\theoremstyle{remark} 
    \newcommand{\Poincare}{Poincar\'{e}~}
    \numberwithin{equation}{section}
\begin{document}

\title{Maximal Eisenstein ideals and cuspidal subgroups of modular Jacobian varieties}
\author{Yuan Ren}

\address{School of Mathematical Science, Sichuan Normal University, Chengdu, Sichuan, China}
\email{rytgyx@126.com}

\begin{abstract}
In this paper, we study the torsion subgroup of $J_0(N)$ over the field generated by those points in the cuspidal group, where $N$ is an odd positive integer. We prove that, considered as Hecke modules, this group and the cuspidal subgroup are both supported at the maximal Eisenstein ideals.
\end{abstract}

\maketitle

\tableofcontents
\section{Introduction}
For any positive integer $N$, let $X_0(N)_{/\BQ}$ be the the modular curve of level $\Gamma_0(N)$, and denote by $J_0(N)_{/\BQ}$ to be its Jacobian variety. Denote $\CC_N$ to be the subgroup of $J_0(N)(\bar{\BQ})$ generated by the cusps of $X_0(N)$, which will be called as \emph{the cuspidal subgroup} of $J_0(N)$. It is well known that any cusp of $X_0(N)$ can be represented as $[\frac{x}{d}]$ with $d$ is a positive divisor of $N$ and $x$ prime to $d$, and such a cusp is defined exactly over the cyclotomic field $\BQ(\mu_{(d,N/d)})$ (see \S1.3 of \cite{St}). It follows that $\CC_N\subseteq J_0(N)(\BQ(\mu_{\widetilde{N}}))$, where $\widetilde{N}=\prod p^{[\frac{1}{2}v_p(N)]}$ and therefore $\BQ(\mu_{\widetilde{N}})$ is the field generated by those points in the cuspidal group. Let $\CJ_N:=J_0(N)(\BQ_{\widetilde{N}})_\text{tor}$. Thus by the theorem of Manin-Drinfeld we have $\CC_N\subseteq\CJ_N$ .

Let $\CC_N(\BQ)=\CC^{G_{\BQ}}_N$ and $\CJ(\BQ)=\CJ^{G_{\BQ}}_N$ where $G_\BQ=\Gal(\bar{\BQ}/\BQ)$. A lot of work have already been devoted to the so-called generalized Ogg's conjecture, which claims that $\CJ_N(\BQ)=\CC_N(\BQ)$ for any positive integer $N$. We know that
\begin{itemize}
  \item $\CJ_p=\CC_p$ for any prime $p$ (see\cite{M});
  \item $\CJ_{p^n}(\BQ)\otimes_{\BZ}\BZ[{1}/{6}]=\CC_{p^n}\otimes_{\BZ}\BZ[{1}/{6}]$ for any prime $p$ and integer $n\geq1$(see \cite{L});
  \item $\CJ_D\otimes_{\BZ}\BZ[{1}/{6}]=\CC_D\otimes_{\BZ}\BZ[{1}/{6}]$ for any square-free $D$ (see \cite{Oh})
\end{itemize}
Note that all point in the cuspidal subgroup are $\BQ$-rational in the cases considered by Mazur and Ohta. However we know little about the whole group $\CJ_N$. In particular, we are wondering about the role played by $\CC_N$ in $\CJ_N$. In this paper, we study the support of $\CC_N$ and $\CJ_N$ as $\BT_0(N)$-modules. Here $\BT_0(N)$ is the full Hecke algebra whose definition will be recalled later. We will show that, away from $6N$, these two Hecke modules are both supported at the maximal Eisenstein ideals (see Definition~\ref{def-Eisenstein}). More precisely, we will prove the following

\begin{thm}\label{main}
Let $N$ be an odd positive integer and $\fm\subseteq\BT_0(N)$ be a maximal ideal. If $(\fm,6N)=1$, then the followings are equivalent:
\begin{enumerate}
  \item $\CC_N[\fm]$ is non-zero;
  \item $\CJ_N[\fm]$ is non-zero;
  \item $\fm$ is a maximal Eisenstein ideal.
\end{enumerate}
\end{thm}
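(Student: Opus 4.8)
The plan is to prove the chain of implications $(3)\Rightarrow(1)\Rightarrow(2)\Rightarrow(3)$, since this is the natural order of difficulty. The implication $(1)\Rightarrow(2)$ is immediate from the inclusion $\CC_N\subseteq\CJ_N$ supplied by Manin--Drinfeld, so the content is concentrated in the other two steps. For $(3)\Rightarrow(1)$, I would first reduce to showing that the $\fm$-torsion of the cuspidal subgroup is nontrivial for any maximal Eisenstein prime $\fm$ of residue characteristic $\ell\nmid 6N$. Here I expect to exploit the explicit description of $\CC_N$ in terms of the degree-zero divisors supported on the cusps $[\frac{x}{d}]$, together with the action of the Hecke operators $T_p$ (for $p\nmid N$) and $U_p$ (for $p\mid N$) on those divisor classes: these operators act on the cuspidal group through ``Eisenstein'' eigenvalues, essentially $1+p$ or combinations of $1$ and $p$ coming from the two cusps above each $p\mid N$. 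So an Eisenstein maximal ideal, by definition of the relevant eigenvalue congruences modulo $\ell$, should annihilate a nonzero subquotient of $\CC_N\otimes\BF_\ell$; one then passes from ``$\fm$ annihilates a nonzero subquotient'' to ``$\CC_N[\fm]\neq 0$'' using that $\CC_N$ is a finite Hecke module, so its $\fm$-adic completion is nonzero exactly when $\fm$ lies in its support, which for a finite module is equivalent to $\CC_N[\fm]\neq 0$.

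The substantive direction is $(2)\Rightarrow(3)$: if $\fm$ is not Eisenstein, then $\CJ_N[\fm]=0$. I would argue by contraposition using the Galois representation attached to a non-Eisenstein maximal ideal. The key input is that for $\fm$ of residue characteristic $\ell\nmid 6N$, the mod-$\ell$ representation $\rho_\fm$ cut out on $J_0(N)[\fm]$ is either reducible (the Eisenstein case) or absolutely irreducible; in the irreducible case, a Ribet-type argument shows $J_0(N)[\fm]$ is two-dimensional over $\BT_0(N)/\fm$ and $\rho_\fm$ is ramified at some prime dividing $N$ in a controlled way. The point is then that $\CJ_N=J_0(N)(\BQ_{\widetilde N})_{\mathrm{tor}}$ is a Galois submodule of $J_0(N)(\bar\BQ)_{\mathrm{tor}}$ fixed by $\Gal(\bar\BQ/\BQ(\mu_{\widetilde N}))$; if $\CJ_N[\fm]\neq 0$ then $\rho_\fm$, restricted to that open subgroup, would have nonzero fixed vectors. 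Since $\BQ(\mu_{\widetilde N})$ is abelian over $\BQ$ and ramified only at primes dividing $N$, this forces $\rho_\fm$ to become reducible after restriction to an abelian extension unramified outside $N$, which — for $\ell$ coprime to $6N$ and $\rho_\fm$ irreducible — contradicts the known ramification/image properties of $\rho_\fm$ (irreducibility is preserved under restriction to such small extensions, e.g. by a standard argument bounding the index of the image or using that $\mathrm{SL}_2(\BF_\ell)$ has no abelian quotient for $\ell\geq 5$). Hence $\CJ_N[\fm]=0$, giving $(2)\Rightarrow(3)$.

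The main obstacle, as I see it, is the last step: controlling the restriction of $\rho_\fm$ to $\Gal(\bar\BQ/\BQ(\mu_{\widetilde N}))$ and ruling out new fixed vectors there. The delicate points are (i) handling the primes $p\mid N$ with $v_p(N)$ even versus odd, since $\widetilde N$ only picks up $p^{\lfloor v_p(N)/2\rfloor}$, so $\BQ(\mu_{\widetilde N})$ may be ramified at $p$ with a prescribed tame quotient, and one needs that the local image of $\rho_\fm$ at $p$ is not killed by this partial ramification; (ii) making sure the exclusion of $\ell\mid 6$ is genuinely used — the primes $2$ and $3$ are where $\mathrm{SL}_2(\BF_\ell)$ acquires abelian quotients, and also where the image of inertia can be too small. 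I expect these to be dispatched by combining Ribet's level-lowering/raising analysis of $J_0(N)[\fm]$ with a careful local study at each $p\mid N$ of the action of inertia on the $\fm$-torsion, distinguishing the multiplicative-reduction and potentially-good cases. Everything else — the reduction of $(3)\Rightarrow(1)$ to an explicit computation with cuspidal divisors, and the passage between support, completion, and $\fm$-torsion for the finite Hecke module $\CC_N$ — I expect to be routine.
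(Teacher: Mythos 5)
Your implications $(1)\Rightarrow(2)$ and $(2)\Rightarrow(3)$ are essentially the paper's argument. For the latter the paper simply notes that if $\rho_\fm$ is irreducible then $J_0(N)[\fm]$ is two-dimensional over $\kappa(\fm)$ by Ribet, so a nonzero $\CJ_N[\fm]$ forces $J_0(N)[\fm]=\CJ_N[\fm]$ and hence $\rho_\fm$ factors through the abelian group $\Gal(\BQ(\mu_{\widetilde N})/\BQ)$, which is impossible for an odd irreducible representation; your additional worries about partial ramification at $p\mid N$ and abelian quotients of $\SL_2(\BF_\ell)$ are not needed.

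The genuine gap is in $(3)\Rightarrow(1)$, which you dismiss as routine but which is the content of most of the paper. Two distinct problems arise. First, ``Eisenstein'' here means that $\rho_\fm$ is reducible, say $\rho_\fm=\bar{\chi}^{-1}\oplus\bar{\chi}\cdot\epsilon_q$; this only gives $T^{\Gamma_0(N)}_\ell\equiv\bar{\chi}(\ell)^{-1}+\ell\bar{\chi}(\ell)\pmod{\fm}$ for $\ell\nmid qN$, and determining the possible values of $T^{\Gamma_0(N)}_p\bmod\fm$ for $p\mid N$ requires the old/new factorizations of $\BT_0(N)$ (Corollaries~\ref{cor1} and~\ref{cor2}, Proposition~\ref{decomposition-prop}, Lemma~\ref{4.4}) before one can even write down an Eisenstein series $E_{M,L,\chi}$ of level $N$ whose system of eigenvalues matches $\fm$ at \emph{every} prime (Definition~\ref{key definition}, Proposition~\ref{Hecke eigen}). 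Second, and more seriously, matching eigenvalues does not by itself show $\CC_N[\fm]\neq0$: the cuspidal group is finite, and the issue is whether the residue characteristic $q$ actually divides the order of the relevant eigencomponent. Your phrase ``should annihilate a nonzero subquotient of $\CC_N\otimes\BF_\ell$'' assumes exactly what must be proved. The paper establishes this via Stevens' period computation: the order of $C_{\Gamma_0(N)}(E_{M,L,\chi})$ away from $6N$ is controlled by twisted $L$-values and generalized Bernoulli numbers (Lemma~\ref{4.6}, Corollary~\ref{4.7}), yielding the isomorphism $\BT_0(N)[\chi,\frac{1}{6N}]/I^{(N)}_{M,L,\chi}\simeq C_{\Gamma_0(N)}(E_{M,L,\chi})\otimes\BZ[\chi,\frac{1}{6N}]$ of Proposition~\ref{index}. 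It is this equality of the index of the Eisenstein ideal with the order of the associated cuspidal subgroup, not any formal support--completion argument for finite modules, that converts $(\BT_0(N)[\chi]/I^{(N)}_{M,L,\chi})_\fm\neq0$ into $(\CC_N)_\fm\neq0$. Your sketch offers no substitute for this quantitative input.
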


There are two main ingredients in our proof. The first is the work of Stevens about relation between weight two-Eisenstein series and cuspidal subgroups which will be recalled in \S2. The second ingredient is the factorizations of Hecke algebras into various old- and new-quotients which will be studied in \S3. Then in \S4 we give the definition of maximal Eisenstein ideal and derive some of their basic properties. In \S5 we associate an Eisenstein series to any such a maximal Eisenstein ideal of residue characteristic prime to $6N$. The cuspidal subgroup associated to this Eisenstein series will enable us to prove Theorem~\ref{main} in the last section.
\\

\textbf{Notations}:
Let $\CH=\{z\in\BC|\Im(z)>0\}$ be the \Poincare upper half-plane. Let $\fq:\CH\rightarrow\BC,z\mapsto e^{2\pi iz},$ be the function on $\CH$ which will be used in the Fourier expansions of modular forms.

For any function $g$ on the upper half plane and any $\gamma=\left(
                        \begin{array}{cc}
                          a & b \\
                          c & d \\
                        \end{array}
                      \right)
\in\GL^+_2(\BR)$, we denote by $g|\gamma$ to be the function $z\mapsto\det(\gamma)\cdot(cz+d)^{-2}\cdot g(\gamma z)$, where $\gamma z=\frac{az+b}{cz+d}$.

\section{Preliminaries}
In this section, we recall the relation between weight two Eisenstein series and cuspidal subgroups of modular Jacobian varieties. For more details, the reader is referred to \cite{St} and \cite{St2}.

Fix a positive integer $N$, and denote by $\Gamma$ to be either $\Gamma_0(N)$ or $\Gamma_1(N)$. Let $\CM_2(\Gamma,\BC)$ be the space of weight two modular forms of level $\Gamma$, then we have the following decomposition
\[\CM_2(\Gamma,\BC)=S_2(\Gamma,\BC)\oplus\CE_2(\Gamma,\BC),\]
where $S_2(\Gamma,\BC)$ and $\CE_2(\Gamma,\BC)$ is the sub-space of cusp forms and Eisenstein series respectively. For any positive integer $n$, there is a Hecke operator $\CT^{\Gamma}_n$ acting on $\CM_2(\Gamma,\BC)$ preserving the above decomposition. We denote the restriction of $\CT^{\Gamma}_n$ to $S_2(\Gamma,\BC)$ by $T^{\Gamma}_n$. Let $\CT_\Gamma$ be the sub-$\BZ$-algebra of $\End(\CM_2(\Gamma,\BC))$ which is generated by $\{\CT^{\Gamma}_n\}_{n\geq1}$. Let $\BT_\Gamma$ be the $\BZ$-algebra generated by $\{T^{\Gamma}_n\}_{n\geq1}$, which is the restriction of $\CT_\Gamma$ to $\End(S_2(\Gamma,\BC))$. We call $\BT_\Gamma$ as the \emph{full} Hecke algebra of level $\Gamma$. If $\Gamma=\Gamma_0(N)$, then we denote $\BT_{\Gamma_0(N)}$ simply as $\BT_0(N)$, which is in fact generated by the $T^{\Gamma_0(N)}_\ell$ for all the primes $\ell$.

Let $X_\Gamma$ be the modular curve over $\BQ$ of level $\Gamma$. We denote by $cusp(\Gamma)$ to be the set of cusps of $X_\Gamma$, and by $Y_\Gamma$ to be the complement of $cusp(\Gamma)$ in $X_\Gamma$. Let $J_\Gamma$ be the Jacobian variety of $X_\Gamma$ over $\BQ$. For any $g\in\CM_2(\Gamma,\BC)$, let $\omega_g$ be the meromorphic differential on $X_\Gamma(\BC)$ whose pullback to the $\CH$ equals $g(z)dz$. Then the differential $\omega_g$ has all its poles supported at the cusps of $X_\Gamma$, and $g$ is a cusp form if and only if $\omega_g$ is everywhere holomorphic on $X_\Gamma$. Let $\Div^0(cusp(\Gamma);\BC)=\Div^0(cusp(\Gamma);\BZ)\otimes_\BZ\BC$. We define the following homomorphism of $\BC$-vector spaces
\[\delta_\Gamma:\CE_2({\Gamma,\BC})\rightarrow \Div^0(cusp(\Gamma);\BC),\]
such that
\[E\mapsto 2\pi i\sum_{x\in{cusp(\Gamma)}}\Res_x(\omega_E)\cdot[x],\]
where $\Res_x(\omega_E)$ is the residue of $\omega_E$ at $[x]$, so that $2\pi i\cdot\Res_x(\omega_E)=e_x\cdot a_0(E;[x])$, with $e_x$ the ramification index of $X_\Gamma$ at $x$ and $a_0(g;[x])$ the constant term of the Fourier expansion of $g$ at the cusp $x$. The homomorphism $\delta_\Gamma$ is actually an isomorphism by the theorem of Manin-Drinfeld. Because the restriction of $\omega_E$ to $Y_\Gamma$ is holomorphic, it induces the following periods integral homomorphism
\begin{align*}
\xi_E:H_1(Y_\Gamma(\BC),\BZ)\rightarrow\BC,\ [c]\mapsto\int_{c}\omega_E
\end{align*}
where $[c]$ is the homology class represented by a $1$-cycle $c$ on $Y_\Gamma(\BC)$. Note that, for any cusp $x$, we have
\begin{align*}
  \int_{c_x}\omega=2\pi i\cdot \Res_x(\omega_E),
\end{align*}
where $c_x$ is a small circle around $x$.

\begin{defn}\label{def}Let $E\in\CE_2(\Gamma,\BC)$ be a weight-two Eisenstein series of level $\Gamma$. We denote by $\CR_\Gamma(E)$ to be the sub-$\BZ$-module of $\BC$ generated by the coefficients of $\delta_\Gamma(E)$, and by $\CR(E)^{\vee}$ to be the dual $\BZ$-module of $\CR(E)$. Then :

$(1)$ the cuspidal subgroup $C_\Gamma(E)$ associated with $E$ is defined to be the subgroup of $J_\Gamma(\overline{\BQ})$ which is generated by $\{w_\Gamma\left(\phi\circ\delta_\Gamma(E)\right)\}_{\phi\in\CR(E)^{\vee}}$, where $w_\Gamma$ is the Atkin-Lehner involution;

$(2)$ the periods $\CP_\Gamma(E)$ of $E$ is defined to be the image of $\xi_E$. Since $\CP_\Gamma(E)$ contains $\CR_\Gamma(E)$ as we have seen above, we can define $A_\Gamma(E)$ to be the quotient $\CP_\Gamma(E)/\CR_\Gamma(E)$.
\end{defn}

\begin{remark}
The above definition of $C_\Gamma(E)$ is slightly different from that given in \cite{St} by adding an action of the Atkin-Lehner operator $w_\Gamma$. Since $w_\Gamma$ is an isomorphism, this modification does not change the order of the associated cuspidal subgroups. However, $C_\Gamma(E)$ is now annihilated by $I_\Gamma(E)$ under the usual action of the Hecke algebra, because $\CT^t_\ell\circ\delta_\Gamma=\delta_\Gamma\circ\CT_\ell$ and $\CT^t_\ell\circ w_\Gamma=w_\Gamma\circ\CT_\ell$ for any prime $\ell$.
\end{remark}

By Proposition 1.1 and Theorem 1.2 of \cite{St2}, $A_\Gamma(E)$ is a finite abelian group and there is a perfect pairing $C_\Gamma(E)\times A_\Gamma(E)\rightarrow\BQ/\BZ$. Thus, the determination of the order of $C_\Gamma(E)$ is reduced to that of $\CP_\Gamma(E)$. Below we briefly review a method due to Stevens for the computation of the periods. The reader is referred to \cite{St2} for details.

We first consider the case when $\Gamma=\Gamma_1(N)$. Denote by $S_N$ to be the set of all primes $p$ satisfying $p\equiv-1\pmod{4N}$. Let $\fX_N$ be the set of all non-quadratic Dirichlet character $\eta$ whose conductor is a prime in $S_N$, and let $\fX^{\infty}_N$ be the set of all non-quadratic Dirichlet character $\eta$ whose conductor is of the form $p^M_\eta$ with $p_\eta\in S_N$ and $M\in\BZ_{\geq1}$.

For any $E=\sum^{\infty}_{n=0}a_n(E;[\infty])\cdot \fq^n\in\CE_2(\Gamma_1(N),\BC)$ and any Dirichlet character $\eta$, the $L$-function associated to the pair $(E,\eta)$ is defined as
\begin{align*}
L(E,\eta,s):=\sum^{\infty}_{n=1}\frac{a_n(E;[\infty])\cdot\eta(n)}{n^s}.
\end{align*}
If $\eta\in\fX^{\infty}_N$ is of conductor $p^M_\eta$, then we define
\begin{align*}
  &\Lambda(E,\eta,1):=\frac{\tau(\overline{\eta})\cdot L(E,\eta,1)}{2\pi i},\\
  &\Lambda_{\pm}(E,\eta,1):=\frac{1}{2}(\Lambda(E,\eta,1)\pm\Lambda(E,\eta\cdot(\frac{}{p_\eta}),1)),
\end{align*}
where $(\frac{}{p_\eta})$ is the Legendre symbol associated to $p_\eta$. It is proved in Theorem 1.3 of \cite{St2} that, if $\CM$ is a finitely generated sub-$\BZ$-module of $\BC$, then the following are equivalent:

(St1) $\CP_{\Gamma_1(N)}(E)\subseteq\CM$;

(St2) $\CR_{\Gamma_1(N)}(E)\subseteq\CM$ and $\Lambda_{\pm}(E,\eta,1)\in\CM[\eta,\frac{1}{p_\eta}]$ for any $\eta\in\fX_{N}$;

(St3) $\CR_{\Gamma_1(N)}(E)\subseteq\CM$ and $\Lambda_{\pm}(E,\eta,1)\in\CM[\eta,\frac{1}{p_\eta}]$ for any $\eta\in\fX^{\infty}_N$.

Because $\Lambda_{\pm}(E,\eta,1)$ is essentially the Bernoulli numbers whose integrality and divisibility are well known (see Theorem 4.2 of \cite{St2}), we can then use the above result to determine the periods $\CP_{\Gamma_1(N)}(E)$ of $E$ and hence the order of $C_{\Gamma_1(N)}(E)$.

On the other hand, if $\Gamma=\Gamma_0(N)$, then Stevens' method can only determine $C_{\Gamma_0(N)}(E)$ up to its intersection with the Shimura subgroup. Recall that, if we denote by $\pi_N$ to be the natural projection of $X_1(N)$ to $X_0(N)$, then the Shimura subgroup of $J_0(N)$ is defined to be
\[\Sigma_N:=\ker\left(\pi^*_N:\ J_0(N)\rightarrow J_1(N)\right),\]
which is finite and of multiplicative type as a $G_\BQ$-module. For any $E\in\CE_2(\Gamma_0(N),\BC)$, we define
\[A^{(s)}_{\Gamma_0(N)}(E):=\left(\CP_{\Gamma_1(N)}(E)+\CR_{\Gamma_0(N)}(E)\right)/\CR_{\Gamma_0(N)}(E),\]
then it can be shown that there is an exact sequence
\[
\xymatrix@C=0.5cm{
  0 \ar[r] & \Sigma_N\bigcap C_{\Gamma_0(N)}(E) \ar[r] & C_{\Gamma_0(N)}(E) \ar[r] & A^{(s)}_{\Gamma_0(N)}(E) \ar[r] & 0, }
  \]
which enables us to determine the order of $C_{\Gamma_0(N)}(E)/\left(\Sigma_N\bigcap C_{\Gamma_0(N)}(E)\right)$.

At the end of this section, we recall some basic properties of the collection of functions $\{\phi_{\underline{x}}\}_{\underline{x}\in(\BQ/\BZ)^{\oplus2}}$ due to Hecke (see \cite{St}, Chapter 2, \S2.4) which we will need later. For any $\underline{x}=(x_1,x_2)\in(\BQ/\BZ)^{\oplus2}$, the Fourier expansion of $\phi_{\underline{x}}$ at infinity is
\begin{align}\label{3.2}
  \phi_{\underline{x}}(z)+\delta(\underline{x})\cdot\frac{i}{2\pi(z-\overline{z})}=\frac{1}{2} B_2(x_1)-P_{\underline{x}}(z)-P_{-\underline{x}}(z)
\end{align}
for any $z\in\CH$, where $B_2(t)=\langle{t}\rangle^2-\langle{t}\rangle+\frac{1}{6}$ is the second Bernoulli polynomial and
\begin{align}\label{3.3}
P_{\underline{x}}(z)=\sum_{k\in\BQ_{>0},k\equiv x_1(1)}k\sum^{\infty}_{m=1}e^{2\pi im(kz+x_2)},
\end{align}
and $\delta(\underline{x})$ is defined to be $1$ or $0$ according to $\underline{x}=0$ or not. If $\underline{x}\neq0$, then $\phi_{\underline{x}}$ is a (holomorphic) Eisenstein series. The whole collection of functions satisfy the following important \emph{distribution law}
\begin{align}\label{3.4}
  \phi_{\underline{x}}=\sum_{\underline{y}:\ \underline{y}\cdot \alpha=\underline{x}}\phi_{\underline{y}}|\alpha,
\end{align}
where $\alpha\in M_2(\BZ)$ with $\det(\alpha)>0$. In particular, we have $\phi_{\underline{x}}|\gamma=\phi_{\underline{x}\cdot\gamma}$ for any $\gamma\in\SL_2(\BZ)$.
\\

\section{Factorizations of Hecke algebras}
\textbf{3.1. }Let $N$ be a positive integer. If $N'$ and $d$ are positive divisors of $N$ such that $N'd\mid N$, then there is a degeneracy morphism
\[\pi^N_{N',d}: X_0(N)\rightarrow X_0(N'),\ ([E,C])\mapsto[E/C[d],C[N'd]/C[d]]\]
where $E$ is an elliptic curve and $C$ a cyclic subgroup of order $N$. If $N''$ and $d'$ are positive divisors of $N'$ such that $N''d'\mid N'$, then it is easy to see that we have the following equation
\begin{align}
  \pi^N_{N'',dd'}=\pi^{N'}_{N'',d'}\circ\pi^N_{N',d}.
\end{align}
Moreover if $[E,C]=[\BC/\BZ z+\BZ,\langle\frac{1}{N}\rangle]$ for some $z\in\CH$, then
\begin{align*}
\pi^N_{N',d}([E,C])&=\pi^N_{N',d}([\BC/\BZ z+\BZ,\langle\frac{1}{N}\rangle])\\
&=[\BC/\BZ z+\frac{1}{d}\BZ,\langle\frac{1}{N'd}\rangle]\\
&=[\BC/\BZ(dz)+\BZ,\langle\frac{1}{N'}\rangle].
\end{align*}
Thus the morphism $\pi^N_{N',d}$ can be analytically described as sending $z$ to $dz$ for any $z\in\CH$. By the Picard functoriality that, there is a homomorphism
\[\pi^{N*}_{N',d}:J_0(N')\rightarrow J_0(N)\]
between abelian varieties over $\BQ$.

\begin{defn}\label{p-old}Let $N$ be a positive integer and $p\mid N$ be a prime. Denote
\[\iota_p:=\pi^{N*}_{N/p,1}+\pi^{N*}_{N/p,p}:\ J_0(N/p)^2\rightarrow J_0(N).\]
Then we define
\[J_0(N)_\text{p-old}:=\Im(\iota_p),\]
which is called as the \textbf{p-old subvariety} of $J_0(N)$, and define
\[J_0(N)^\text{p-new}:=\coker(\iota_p),\]
which is called as the \textbf{p-new quotient variety} of $J_0(N)$.
\end{defn}

\textbf{3.2. }For our later use, here we take a short digression to recall the algebraic definition of the Hecke operators. Fix $N\in\BZ_{\geq1}$ to be a positive integer. For any prime $\ell$, let $X_0(N,\ell)_{/\BQ}$ be the compactified coarse moduli space which classifies all triples $[E,C,D]$, where $E$ is an elliptic curve over some $\BQ$-scheme, $C$ a cyclic subgroup of order $N$, and $D$ a cyclic subgroup of order $\ell$ such that $D\bigcap C={0}$. Let
\[\alpha_\ell,\ \beta_\ell:X_0(N,\ell)\rightarrow X_0(N)\]
be morphisms between smooth curves over $\BQ$ such that
\begin{align*}
\begin{cases}
\alpha_\ell\left([E,C,D]\right)=[E,C]\\
\beta_\ell\left([E,C,D]\right)=[E/D,(C+D)/D],
\end{cases}
\end{align*}
then we define the \emph{$\ell$-th Hecke operator} on $J_0(N)$ as
\[T^{\small{\Gamma_0(N)}}_\ell:=\beta_{\ell*}\circ\alpha^*_\ell\in\End_{\BQ}(J_0(N)).\]
For any point $[E,C]$ on $X_0(N)$, we have the following equation of divisors
\begin{align}
T^{\small{\Gamma_0(N)}}_\ell([E,C])=\sum_{D}[E/D,(C+D)/D],
\end{align}
where $D$ runs over all cyclic subgroup of order $\ell$ such that $D\bigcap C={0}$. Define
\[\BT_0(N):=\BZ[\{T^{\small{\Gamma_0(N)}}_\ell\}_\ell]\subseteq\End_{\BQ}(J_0(N)),\]
which is called to be the (full) Hecke algebra of level $N$. Since
\begin{align*}
T^{\small{\Gamma_0(N)}}_{\ell_2}\circ T^{\small{\Gamma_0(N)}}_{\ell_1}([E,C])&=T^{\small{\Gamma_0(N)}}_{\ell_2}\left(\sum_{D_1}[E/{D_1},(C+D_1)/{D_1}]\right)\\
&=\sum_{D_1}\sum_{D_2}[E/(D_1+D_2),(C+D_1+D_2)/(D_1+D_2)]\\
&=T^{\small{\Gamma_0(N)}}_{\ell_1}\circ T^{\small{\Gamma_0(N)}}_{\ell_2}([E,C])
\end{align*}
for any two primes $\ell_1\neq\ell_2$, we find that $\BT_0(N)$ is a commutative ring, which is in fact free of finite rank over $\BZ$ (see \cite{Ri}). Moreover, for a generic complex point $[\BC/\BZ z+\BZ,\langle1/N\rangle]$ in $X_0(N)(\BC)$, it follows from the definition of $\alpha_\ell$ that
\begin{align*}
\alpha^{*}_\ell([\BC/\BZ z+\BZ,\langle\frac{1}{N}\rangle])=
\begin{cases}
\sum^{\ell-1}_{k=0}[\BC/\BZ z+\BZ,\langle\frac{1}{N}\rangle+\langle\frac{z+k}{\ell}\rangle]+[\BC/\BZ z+\BZ,\langle\frac{1}{N\ell}\rangle]&,\text{if}\ \ell\nmid N\\
\sum^{\ell-1}_{k=0}[\BC/\BZ z+\BZ,\langle\frac{1}{N}\rangle+\langle\frac{z+k}{\ell}\rangle] &,\text{if}\ \ell\mid N,
\end{cases}
\end{align*}
so that analytically we find that
\begin{align*}
T^{\Gamma_0(N)}_\ell=
\begin{cases}
\sum^{\ell-1}_{k=0}\left(
                     \begin{array}{cc}
                       1 & k \\
                       0 & \ell \\
                     \end{array}
                   \right)
+\left(
                                                \begin{array}{cc}
                                                  \ell & 0 \\
                                                  0 & 1 \\
                                                \end{array}
                                              \right)
&,\text{if}\ \ell\nmid N\\
\sum^{\ell-1}_{k=0}\left(
                     \begin{array}{cc}
                       1 & k \\
                       0 & \ell \\
                     \end{array}
                   \right)&,\text{if}\ \ell\mid N,
\end{cases}
\end{align*}
which therefore coincides with the classical expressions we discussed before.

\begin{lem}\label{lemma1}Let $N\in\BZ_{\geq1}$ and $p\mid N$ be a prime. If $\ell\neq p$ is a prime, then
\begin{align*}
\begin{cases}
T^{\Gamma_0(N)}_\ell\circ\pi^{N*}_{N/p,1}=\pi^{N*}_{N/p,1}\circ T^{\Gamma_0(N/p)}_\ell\\
T^{\Gamma_0(N)}_\ell\circ\pi^{N*}_{N/p,p}=\pi^{N*}_{N/p,p}\circ T^{\Gamma_0(N/p)}_\ell
\end{cases}
\end{align*}
\end{lem}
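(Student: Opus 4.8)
The plan is to prove the commutation relation by working with the moduli-theoretic description of all the morphisms involved, reducing both sides to an explicit equality of divisors (or $0$-cycles) on $X_0(N)$, and then transferring this to the Jacobians via Picard/Albanese functoriality. Since $\BT_0(N)$ acts on $J_0(N)$ through pullbacks and pushforwards of correspondences, and the degeneracy map $\pi^{N*}_{N/p,d}$ is itself induced by a morphism of curves, the natural strategy is: first establish the corresponding identity for the correspondences on the level of curves, and then apply the (contravariant) Picard functor, using that pushforward and pullback are compatible in the appropriate fiber-product situations.

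Concretely, I would first set up the fiber-product diagram relating $X_0(N,\ell)$, $X_0(N)$, $X_0(N/p,\ell)$, and $X_0(N/p)$: the maps $\alpha_\ell,\beta_\ell$ on the $N$-level and on the $N/p$-level, together with the two degeneracy morphisms $\pi^N_{N/p,1}$ and $\pi^N_{N/p,p}$. The key claim is that for $d\in\{1,p\}$ the diagram
\begin{align*}
\xymatrix{
X_0(N,\ell)\ar[r]^{\beta_\ell}\ar[d]_{\alpha_\ell} & X_0(N)\ar[d]^{\pi^N_{N/p,d}}\\
X_0(N)\ar[d]_{\pi^N_{N/p,d}} & \\
X_0(N/p) &
}
\end{align*}
fits into a larger commuting square in which the fiber product $X_0(N,\ell)\times_{X_0(N)}X_0(N)$ (along $\alpha_\ell$ and $\pi^N_{N/p,d}$) is identified, away from a finite set, with $X_0(N/p,\ell)$, so that $\pi^N_{N/p,d}\circ\beta_\ell$ and $\beta_\ell^{N/p}\circ\pi$ agree and likewise on the $\alpha$ side. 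The cleanest way to see this is on the moduli functors: $\alpha_\ell[E,C,D]=[E,C]$, $\beta_\ell[E,C,D]=[E/D,(C+D)/D]$, and $\pi^N_{N/p,d}[E,C]=[E/C[d],C[N/(pd)\cdot? ]]$ — more precisely $[E/C[d],C[(N/p)d]/C[d]]$ in the notation of \S3.1; since $\ell\neq p$, the subgroup $D$ of order $\ell$ is disjoint from $C[d]$ and from $C[(N/p)d]$, and the operations "quotient by $D$" and "quotient by $C[d]$" commute, yielding on both composites the triple/point $[E/(C[d]+D),(C[(N/p)d]+D)/(C[d]+D)]$ with the order-$\ell$ structure carried along. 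This is exactly the statement that the two squares of degeneracy/Hecke correspondences commute as correspondences.

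Having the curve-level identity, I would then invoke the standard compatibility: if we have a commutative square of finite flat morphisms of smooth curves which is (generically) cartesian, then pushforward along one pair commutes with pullback along the other (base-change for $\Pic^0$), so that on Jacobians
\begin{align*}
\pi^{N*}_{N/p,d}\circ\bigl(\beta_\ell^{N/p}{}_*\circ(\alpha_\ell^{N/p})^*\bigr)=\bigl(\beta_{\ell*}\circ\alpha_\ell^*\bigr)\circ\pi^{N*}_{N/p,d},
\end{align*}
which is precisely $\pi^{N*}_{N/p,d}\circ T^{\Gamma_0(N/p)}_\ell=T^{\Gamma_0(N)}_\ell\circ\pi^{N*}_{N/p,d}$ for $d=1$ and $d=p$. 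Alternatively, and perhaps more transparently for the reader, one can argue entirely analytically: using the description in \S3.1 that $\pi^N_{N/p,1}$ is "$z\mapsto z$" and $\pi^N_{N/p,p}$ is "$z\mapsto pz$" on $\CH$, and the matrix description of $T^{\Gamma_0(N)}_\ell$ recalled in \S3.2, the identity reduces to the elementary fact that $\left(\begin{smallmatrix}1&0\\0&1\end{smallmatrix}\right)$ and $\left(\begin{smallmatrix}p&0\\0&1\end{smallmatrix}\right)$ commute with each of $\left(\begin{smallmatrix}1&k\\0&\ell\end{smallmatrix}\right)$ and $\left(\begin{smallmatrix}\ell&0\\0&1\end{smallmatrix}\right)$ up to a permutation of the index $k$ (here one uses $\gcd(\ell,p)=1$ to reindex the residues $k\bmod\ell$), and that $\pi^{N*}$ is computed on differentials/homology by this change of variables; one then uses that the $\ell\nmid N/p$ versus $\ell\mid N/p$ cases of $T_\ell$ match the $\ell\nmid N$ versus $\ell\mid N$ cases since $\ell\neq p$ implies $\ell\mid N\iff\ell\mid N/p$.

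The main obstacle is bookkeeping rather than conceptual: one must check carefully that the relevant square of curves is cartesian up to a controlled finite set (coming from elliptic points, cusps, and points with extra automorphisms), so that the base-change identity for $\Pic^0$ applies, and that the "disjointness" hypotheses ($D\cap C=0$ in the definition of $X_0(N,\ell)$, and $\ell\neq p$) are exactly what is needed to make the two quotient operations commute on the nose at the level of moduli. Once that is pinned down, everything else is formal. I would present the analytic/matrix argument as the main line since it is self-contained given \S3.1–3.2, remarking that the moduli-theoretic version gives the same conclusion over $\BQ$.
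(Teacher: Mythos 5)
Your moduli-theoretic argument is essentially the paper's own proof: the paper computes $\pi^{N*}_{N/p,d}$ on a point $[E,C]$ of $Y_0(N/p)$ as a sum over refinements of the $p$-primary part of $C$, and then observes that since $\ell\neq p$ the correspondences $\alpha_\ell,\beta_\ell$ defining $T_\ell$ act only on the $\ell$-part and leave the $p$-part untouched --- which is exactly your observation that quotienting by the order-$\ell$ subgroup $D$ commutes with quotienting by $C[d]$. The paper does not invoke the cartesian-square/base-change formalism for $\Pic^0$; it simply checks the equality of divisors on a dense open set and lets that determine the morphism of Jacobians, which is all that is needed. Your preferred analytic/matrix route (reindexing $k\mapsto pk\bmod\ell$ and using that $\ell\mid N\iff\ell\mid N/p$ when $\ell\neq p$) is a standard alternative, and indeed the paper itself switches to exactly this style for the harder $p$-power identities in Lemma~\ref{lemma3}; its only cost is the bookkeeping you flag, namely deciding whether $\pi^{N*}_{N/p,d}$ acts on the chosen realization (differentials, homology, or divisors) by the single matrix $\diag(d,1)$ or by the full coset sum, a point you should pin down explicitly if you make the analytic argument the main line.
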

\begin{proof}
For any point $[E,C]$ on $Y_0(N/p)$, we decompose $C=\prod_{q}C_q$ with $C_q$ be the $q$-primary part for any prime $q$. Then
\begin{align*}
\pi^{N*}_{N/p,1}([E,C])&=\pi^{N*}_{N/p,1}([E,\prod_{q}C_q])\\
&=\sum[E,(\prod_{q\neq p}C_q)\times C'_p],
\end{align*}
where $C'_p$ runs over all cyclic subgroups of order $p^{v_p(N)}$ such that $C'_p[p^{v_p(N)-1}]=C_p$. Since $\ell\neq p$, the morphisms used to define the $\ell$-th Hecke operators have their affections only on the $\ell$-part and hence leave the $p$-part unchanged. So the first equation follows. The proof for $\pi^{N*}_{N/p,p}$ is similar.
\end{proof}

Let $p$ a prime divisor of $N$. Recall that we have the $p$-th Atkin-Lehner operator
\[w_p: X_0(N)\rightarrow X_0(N),\ ([E,C])\mapsto[E/C[p^v_p(N)],(C+E[p^{v_p(N)}])/C[p^{v_p(N)}]],\]
where $v_p(N)$ is the $p$-adic valuation of $N$. Since we have
\begin{align*}
w^2_p([E,C])&=w_p([E/C[p^{v_p(N)}],(C+E[p^{v_p(N)}])/C[p^{v_p(N)}]])\\
&=[E/E[p^{v_p(N)}],(C+\frac{1}{p^{v_p(N)}}C[p^{v_p(N)}])/E[p^{v_p(N)}]]\\
&=[E,C],
\end{align*}
it follows that $w^2_p=\id$, that is, $w_p$ is an involution on $X_0(N)$. In particular we have $w^*_p=W_{p,*}$ as automorphisms on $J_0(N)$ which will be simply denoted as $w_p$.

\begin{lem}\label{lemma2}Let $N\in\BZ_{\geq1}$ and $p$ be a prime such that $p\parallel N$. Then
\begin{align*}
\begin{cases}
\pi^{N}_{N/p,1}\circ w_p=\pi^{N}_{N/p,p}\\
\pi^{N}_{N/p,p}\circ w_p=\pi^{N}_{N/p,1},
\end{cases}
\end{align*}
and we have that
\[\pi^{N*}_{N/p,p}\circ\pi^{N}_{N/p,1*}=T^{\Gamma_0(N)}_p+w_p.\]
\end{lem}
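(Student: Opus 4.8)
The plan is to verify the three identities on moduli points, i.e.\ on the dense open $Y_0(N)\subseteq X_0(N)$, and then to conclude: two morphisms $X_0(N)\to X_0(N/p)$ agreeing on $Y_0(N)$ coincide, and two endomorphisms of $J_0(N)$ agreeing as correspondences over the generic point of $X_0(N)$ coincide, so that ramification at the cusps plays no role. Throughout one uses $v_p(N)=1$: for a cyclic $C\subseteq E$ of order $N$ the subgroup $C[p]$ is its $p$-primary part, $C=C[p]\oplus C[N/p]$, and $w_p([E,C])=[E/C[p],(C+E[p])/C[p]]$.

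For the first identity I would evaluate both sides at $[E,C]\in Y_0(N)$. Let $q\colon E\to E/C[p]$ be the quotient isogeny. Then $w_p([E,C])=[E/C[p],\,q(C)+q(E[p])]$, and since $\pi^N_{N/p,1}$ replaces the level structure by its prime-to-$p$ part, the left side becomes $[E/C[p],\,q(C[N/p])]$, because $q(C[p])=0$ and $q(E[p])$ is $p$-torsion. On the other side $\pi^N_{N/p,p}([E,C])=[E/C[p],\,C/C[p]]=[E/C[p],\,q(C)]=[E/C[p],\,q(C[N/p])]$. Hence the two composites agree on $Y_0(N)$, so everywhere; the second identity then follows by composing the first on the right with $w_p$ and using $w_p^2=\id$.

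The real content is the last identity. Write $\pi=\pi^N_{N/p,1}$ and $\pi'=\pi^N_{N/p,p}$, so $\pi_*\colon J_0(N)\to J_0(N/p)$ is the Albanese pushforward and $\pi'^{\,*}\colon J_0(N/p)\to J_0(N)$ the Picard pullback. For $[E,C]\in Y_0(N)$ one has $\pi_*([E,C])=[E,C[N/p]]$, and I must describe $\pi'^{\,*}([E,C[N/p]])$, i.e.\ the fibre of $\pi'$ over that point. A point $[\bar E,\bar C]$ lies in this fibre exactly when there is a degree-$p$ isogeny $\bar E\to E$ with kernel $\bar C[p]$ and with $\bar C/\bar C[p]$ mapping to $C[N/p]$; such $\bar E$ are precisely the $p+1$ curves $E/D_0$ with $D_0\subseteq E[p]$ of order $p$, and for each, $\bar C[p]$ is the kernel $E[p]/D_0$ of the dual isogeny $E/D_0\to E$, while the prime-to-$p$ part of $\bar C$ is $q_{D_0}(C[N/p])$ with $q_{D_0}\colon E\to E/D_0$. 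Therefore
\[
\pi'^{\,*}\pi_*([E,C])=\sum_{\substack{D_0\subseteq E[p]\\ |D_0|=p}}\Bigl[\,E/D_0,\ (E[p]/D_0)+q_{D_0}(C[N/p])\,\Bigr].
\]
I then split the sum according to whether $D_0=C[p]$. For the $p$ subgroups $D_0\neq C[p]$ one has $C[p]+D_0=E[p]$, whence $C+D_0=E[p]\oplus C[N/p]$ and $(E[p]/D_0)+q_{D_0}(C[N/p])=(C+D_0)/D_0$; these are exactly the summands of $T^{\Gamma_0(N)}_p([E,C])$, since for $p\mid N$ that operator sums $[E/D,(C+D)/D]$ over order-$p$ subgroups $D$ with $D\cap C=0$, i.e.\ $D\neq C[p]$. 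For $D_0=C[p]$ one gets $(E[p]/C[p])+q_{C[p]}(C[N/p])=q_{C[p]}(E[p]+C)=(C+E[p])/C[p]$, which is $w_p([E,C])$. Summing, $\pi'^{\,*}\pi_*([E,C])=T^{\Gamma_0(N)}_p([E,C])+w_p([E,C])$, and the identity of endomorphisms of $J_0(N)$ follows.

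The main obstacle is the moduli bookkeeping for $\pi'^{\,*}$: pinning down the fibre of $\pi^N_{N/p,p}$, in particular that on a point of the fibre the $p$-part of the level structure is the kernel $E[p]/D_0$ of the dual isogeny, and then matching $(E[p]/D_0)+q_{D_0}(C[N/p])$ with $(C+D_0)/D_0$ for $D_0\neq C[p]$ and with $(C+E[p])/C[p]$ for $D_0=C[p]$. One must also keep straight that $T^{\Gamma_0(N)}_p$ for $p\mid N$ is the $p$-term sum just used, and that checking on $Y_0(N)$ suffices (so that no ramification correction at the cusps enters $\pi'^{\,*}$). As an alternative to the moduli-theoretic computation, the same split into a ``$U_p$-part'' and a ``$w_p$-part'' can be obtained from the matrix/lattice descriptions of \S3.2 by tracking $\BZ z+\BZ$ together with its cyclic subgroups.
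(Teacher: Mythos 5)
Your proposal is correct and follows essentially the same route as the paper: both identities involving $w_p$ are checked on moduli points of $Y_0(N)$, and the key identity is obtained by describing the fibre of $\pi^{N}_{N/p,p}$ over $[E,C[N/p]]$ as the $p+1$ points $[E/D_0,(C+E[p])/D_0]$ indexed by order-$p$ subgroups $D_0\subseteq E[p]$, then separating the term $D_0=C[p]$ (giving $w_p$) from the terms $D_0\cap C=0$ (giving $T^{\Gamma_0(N)}_p$). The only cosmetic difference is that you deduce the second degeneracy identity from the first via $w_p^2=\id$, whereas the paper computes both directly.
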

\begin{proof}
For any $[E,C]$ in $Y_0(N)\subseteq X_0(N)$, we have
\begin{align*}
  \pi^{N}_{N/p,1}\circ w_p([E,C])&=\pi^{N}_{N/p,1}\left([E/C[p],(C+E[p])/C[p]]\right)\\
  &=[E/C[p],C/C[p]]\\
  &=\pi^{N}_{N/p,p}([E,C]),
\end{align*}
and similarly,
\begin{align*}
  \pi^{N}_{N/p,p}\circ w_p([E,C])&=\pi^{N}_{N/p,p}\left([E/C[p],(C+E[p])/C[p]]\right)\\
  &=[E/E[p],(C+E[p])/E[p]]\\
  &=[E,C[N/p]]=\pi^{N}_{N/p,1}([E,C]),
\end{align*}
which prove the first assertion. For the second assertion, note that we have
\begin{align*}
\pi^{N*}_{N/p,p}\circ\pi^{N}_{N/p,1*}([E,C])&=\pi^{N*}_{N/p,p}([E,C[N/p]])\\
&=\sum_{i}[E_i,C_i],
\end{align*}
where the sum runs over all points which are mapped to $[E,C[N/p]]$ by $\pi^{N}_{N/p,p}$. Because $\pi^{N}_{N/p,p}([E_i,C_i])=[E_i/C_i[p],C_i/C_i[p]]$, it follows that there is an isogeny $\phi_i:E_i\rightarrow E$ with $\ker(\phi_i)=C_i[p]$, which induces an isomorphism $C_i/C_i[p]\simeq C[N/p]$. Let $D_i=\phi_i(E_i[p])$. Then we find that there is an isomorphism $\psi_i:E_i\simeq E/D_i$, such that where $C_i[N/p]\subseteq E_i$, the image of $C_i/C_i[p]$ under $[p]$, is mapped to $\overline{C[N/p]}$ in $E/D_i$ via $\psi_i$. On the other hand, if we write $C_i[p]=\BZ/p\BZ\cdot{(x)}$ for some generator $x$, then the pre-image of $C_i[p]$ under $E_i\rightarrow E_i/E_i[p]\simeq E_i$ can be described as $\BZ/p^2\BZ\cdot{(\frac{x}{p})}+E_i[p]$. Thus through the identification $E_i/C_i[p]\simeq E$, we find that $\BZ/p^2\BZ\cdot{(\frac{x}{p})}+E_i[p]$ is mapped to $E[p]$. It follows that $\psi_i(C_i[p])=E[p]/D_i$, and therefore
\begin{align*}
\pi^{N*}_{N/p,p}\circ\pi^{N}_{N/p,1*}([E,C])&=\sum_{i}[E/D_i,(C[N/p]+E[p])/D_i]\\
&=\sum_{i}[E/D_i,(C+E[p])/D_i],
\end{align*}
where $D_i$ runs over all cyclic subgroup of order $p$. We have thus proved the lemma because
\[T^{\Gamma_0(N)}([E,C])=\sum_{D\bigcap C=0}[E/D,(C+E[p])/D],\]
and
\[w_p([E,C])=[E/C[p],(C+E[p])/C[p]].\]
\end{proof}

\begin{cor}\label{cor1}
If $N\in\BZ_{\geq1}$ and $p$ is a prime such that $p\parallel N$, then
\begin{align*}
  T^{\Gamma_0(N)}_p\circ(\pi^{N*}_{N/p,1}\ \pi^{N*}_{N/p,p})=(\pi^{N*}_{N/p,1}\ \pi^{N*}_{N/p,p})\circ\left(
                                                                                                   \begin{array}{cc}
                                                                                                     0 & -1 \\
                                                                                                     p & {T^{\Gamma_0(N/p)}_p} \\
                                                                                                   \end{array}
                                                                                                 \right).
\end{align*}
Therefore both $J_0(N)_\text{p-old}$ and $J_0(N)^\text{p-new}$ are stable under the action of $\BT_0(N)$. In particular we have $T^{\Gamma_0(N)}_p=-w_p$ on $J_0(N)^\text{p-new}$.
\end{cor}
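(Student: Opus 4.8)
The claimed matrix identity is essentially Lemma~\ref{lemma2} repackaged, so the first step is to extract the four scalar relations it encodes and verify each against Lemmas~\ref{lemma1} and~\ref{lemma2}. Writing $\pi_1 = \pi^{N*}_{N/p,1}$ and $\pi_p = \pi^{N*}_{N/p,p}$, the matrix equation asserts
\[
T^{\Gamma_0(N)}_p\circ\pi_1 = p\cdot\pi_p, \qquad
T^{\Gamma_0(N)}_p\circ\pi_p = -\pi_1 + \pi_p\circ T^{\Gamma_0(N/p)}_p.
\]
For the first relation I would use $\pi_p = \pi_1\circ w_p$ (the dual of the first identity in Lemma~\ref{lemma2}, since $w_p$ is an involution and push-forward on $J_0(N)$ equals pull-back) together with $\pi_p\circ\pi^N_{N/p,1*} = T^{\Gamma_0(N)}_p + w_p$; composing the latter with $\pi_1$ on the right and using $\pi_1^*\circ\pi^N_{N/p,1*}\colon J_0(N/p)\to J_0(N/p)$ equals multiplication by the degree $[\Gamma_0(N/p):\Gamma_0(N)] = p+1$ (here $p\parallel N$), one gets $(T^{\Gamma_0(N)}_p + w_p)\circ\pi_1 = (p+1)\pi_p$, and since $w_p\circ\pi_1 = \pi_p$ by the dual of Lemma~\ref{lemma2} this rearranges to $T^{\Gamma_0(N)}_p\circ\pi_1 = p\,\pi_p$. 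For the second relation, apply $w_p$ to the first: $T^{\Gamma_0(N)}_p\circ\pi_p = T^{\Gamma_0(N)}_p\circ w_p\circ\pi_1 = w_p\circ T^{\Gamma_0(N)}_p\circ\pi_1 = p\,w_p\circ\pi_p = p\,\pi_1$—wait, that gives the wrong sign, so instead I would derive it directly from $\pi_p\circ\pi^N_{N/p,p*} = $ (the $w_p$-conjugate of Lemma~\ref{lemma2}) $= T^{\Gamma_0(N)}_p + w_p$ again, now composed with $\pi_p$, using $\pi_p^*\circ\pi^N_{N/p,p*}$ again has degree $p+1$ but the cross-term $\pi_1^*\circ\pi^N_{N/p,p*}$ contributes a copy of $T^{\Gamma_0(N/p)}_p$; this is the standard computation identifying the old-subspace Hecke action, and I expect the bookkeeping there to be the one genuinely delicate point.

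**From the matrix identity to stability.** Once the displayed matrix relation holds for $T_p$, stability of $J_0(N)_\text{p-old} = \Im(\iota_p) = \Im(\pi_1\ \pi_p)$ under $T^{\Gamma_0(N)}_p$ is immediate: the identity says $T_p$ maps the image of $(\pi_1\ \pi_p)$ into itself via the right-multiplication-by-matrix factorization. Stability under the remaining generators $T^{\Gamma_0(N)}_\ell$ for $\ell\neq p$ is exactly Lemma~\ref{lemma1}, which gives $T_\ell\circ\pi_1 = \pi_1\circ T^{\Gamma_0(N/p)}_\ell$ and $T_\ell\circ\pi_p = \pi_p\circ T^{\Gamma_0(N/p)}_\ell$. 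Since $\{T_\ell\}_\ell$ together with $T_p$ generate $\BT_0(N)$, this proves $J_0(N)_\text{p-old}$ is $\BT_0(N)$-stable. For $J_0(N)^\text{p-new} = \coker(\iota_p)$, stability is formal: a $\BT_0(N)$-stable subvariety has a $\BT_0(N)$-stable quotient, because each Hecke operator descends.

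**The final assertion $T_p = -w_p$ on the $p$-new quotient.** Here the key is the relation $T^{\Gamma_0(N)}_p + w_p = \pi_p\circ\pi^N_{N/p,1*}$ from Lemma~\ref{lemma2}: the right-hand side factors through $J_0(N/p)$ via $\pi_p = \pi^{N*}_{N/p,p}$, hence its image lies in $J_0(N)_\text{p-old} = \Im(\iota_p)$, so $T_p + w_p$ is killed in $\coker(\iota_p) = J_0(N)^\text{p-new}$, i.e.\ $T_p = -w_p$ there. I would write this last step as the cleanest argument and present it before or alongside the stability claim, since it only uses the already-proved Lemma~\ref{lemma2} directly. The only subtlety to flag is that all these identities are stated as maps of abelian varieties over $\BQ$ (via Picard/Albanese functoriality), so one should be slightly careful that push-forward and pull-back are being composed in the order the degeneracy-map diagrams dictate; I expect no real obstacle there, just notational care.
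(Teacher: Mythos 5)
Your proposal is correct and follows essentially the same route as the paper: compose the identity $T^{\Gamma_0(N)}_p+w_p=\pi^{N*}_{N/p,p}\circ\pi^{N}_{N/p,1*}$ from Lemma~\ref{lemma2} with the two pullbacks, using $\pi^{N}_{N/p,1*}\circ\pi^{N*}_{N/p,1}=[p+1]$, $\pi^{N}_{N/p,1*}\circ\pi^{N*}_{N/p,p}=T^{\Gamma_0(N/p)}_p$, and the Atkin--Lehner swaps $w_p\circ\pi^{N*}_{N/p,1}=\pi^{N*}_{N/p,p}$, $w_p\circ\pi^{N*}_{N/p,p}=\pi^{N*}_{N/p,1}$, then deduce stability from Lemma~\ref{lemma1} plus the matrix identity, and get $T_p=-w_p$ on the cokernel because $T_p+w_p$ factors through $\pi^{N*}_{N/p,p}$ and hence lands in the $p$-old subvariety. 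The only blemishes are a few reversed compositions in your write-up (e.g.\ the endomorphism of $J_0(N/p)$ that is multiplication by the degree is $\pi^{N}_{N/p,1*}\circ\pi^{N*}_{N/p,1}$, not $\pi^{N*}_{N/p,1}\circ\pi^{N}_{N/p,1*}$, and the ``cross-term'' giving $T^{\Gamma_0(N/p)}_p$ is $\pi^{N}_{N/p,1*}\circ\pi^{N*}_{N/p,p}$), which you yourself flag as a matter of notational care and which do not affect the argument.
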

\begin{proof}
By Lemma~\ref{lemma1}, it suffices to show that $J_0(N)_\text{p-old}$ is stable under the action of the $p$-th Hecke operator $T^{\Gamma_0(N)}_p$. Since $\deg(\pi^{N}_{N/p,1})=p+1$, we find from Lemma~\ref{lemma2} that
\begin{align*}
  T^{\Gamma_0(N)}_p\circ\pi^{N*}_{N/p,1}&=\pi^{N*}_{N/p,p}\circ\pi^{N}_{N/p,1*}\circ\pi^{N*}_{N/p,1}-w_p\circ\pi^{N*}_{N/p,1}\\
  &=(p+1)\cdot\pi^{N*}_{N/p,p}-\pi^{N*}_{N/p,p}\\
  &=\pi^{N*}_{N/p,p},
\end{align*}
and similarly
\begin{align*}
  T^{\Gamma_0(N)}_p\circ\pi^{N*}_{N/p,p}&=\pi^{N*}_{N/p,p}\circ\pi^{N}_{N/p,1*}\circ\pi^{N*}_{N/p,p}-w_p\circ\pi^{N*}_{N/p,p}\\
  &=\pi^{N*}_{N/p,p}\circ T^{\Gamma_0(N/p)}-\pi^{N*}_{N/p,1}.
\end{align*}
Thus we have proved the first assertion, from which follows the stability of $J_0(N)_\text{p-old}$ and hence that of $J_0(N)^\text{p-new}$. The last assertion is clear from Lemma~\ref{lemma2}.
\end{proof}

\begin{lem}\label{lemma3}
If $p$ is a prime such that $p^2\mid N$, then we have
\begin{align*}
\begin{cases}
T^{\Gamma_0(N)}_p=\pi^{N*}_{N/p,p}\circ\pi^{N}_{N/p,1*}\\
T^{\Gamma_0(N/p)}_p=\pi^{N}_{N/p,1*}\circ\pi^{N*}_{N/p,p}.
\end{cases}
\end{align*}
\end{lem}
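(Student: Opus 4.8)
The plan is to mimic the structure of Lemma~\ref{lemma2}, but now in the case $p^2\mid N$, where the degeneracy map $\pi^{N}_{N/p,p}$ replaces $\pi^{N}_{N/p,1}$ in the role of "forgetting the top of the $p$-level". Concretely, I would work out $\pi^{N*}_{N/p,p}\circ\pi^{N}_{N/p,1*}$ on a divisor $[E,C]$ of $Y_0(N)$. Decompose $C=C_p\times C^{(p)}$ into its $p$-primary part $C_p$ (cyclic of order $p^{v_p(N)}$, with $v_p(N)\geq 2$) and its prime-to-$p$ part. By the moduli description, $\pi^{N}_{N/p,1*}([E,C])=[E,C[N/p]]$, i.e.\ one replaces $C_p$ by its subgroup $C_p[p^{v_p(N)-1}]$; then $\pi^{N*}_{N/p,p}$ pulls this back by summing over all points $[E',C']$ on $X_0(N)$ with $\pi^{N}_{N/p,p}([E',C'])=[E,C[N/p]]$. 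Unwinding $\pi^{N}_{N/p,p}([E',C'])=[E'/C'[p],C'[N'p]/C'[p]]$ (with $N'=N/p$), I expect to find, exactly as in Lemma~\ref{lemma2}, that each such $[E',C']$ is isomorphic to $[E/D, (\text{lift of }C)/D]$ for a cyclic $D\subseteq E$ of order $p$ — and the key point is that because $v_p(N)\geq 2$, the condition $D\cap C=0$ is \emph{automatic}: since $C_p$ is cyclic of order divisible by $p^2$, every order-$p$ subgroup $D$ meets $C_p$ trivially only if... wait, that is false; rather, the correct statement is that here $D$ ranges over \emph{all} $p+1$ order-$p$ subgroups with no coprimality constraint surviving, because the "new" direction that $w_p$ would contribute has collapsed. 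This is precisely why the Atkin--Lehner term $w_p$ present in Lemma~\ref{lemma2} disappears when $p^2\mid N$, and one gets cleanly $T^{\Gamma_0(N)}_p=\pi^{N*}_{N/p,p}\circ\pi^{N}_{N/p,1*}$.

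For the first identity I would therefore (i) recall from \S3.2 that $T^{\Gamma_0(N)}_p([E,C])=\sum_{D}[E/D,(C+D)/D]$ with $D$ running over order-$p$ cyclic subgroups with $D\cap C=0$, (ii) observe that when $p^2\mid N$ the subgroup $C[p]$ is the unique order-$p$ subgroup of $C_p$ and it does \emph{not} satisfy $D\cap C=0$, so effectively $D$ ranges over the $p$ remaining order-$p$ subgroups — matching exactly the points appearing in the pushforward-pullback computation above — and (iii) verify the level structures $(C+D)/D$ agree. The second identity, $T^{\Gamma_0(N/p)}_p=\pi^{N}_{N/p,1*}\circ\pi^{N*}_{N/p,p}$, I would derive either by the same direct moduli computation at level $N/p$ (noting $p^2\mid N$ forces $p\mid N/p$, so the level-$N/p$ Hecke operator is also the "ramified" one $\sum_{k}\left(\begin{smallmatrix}1&k\\0&p\end{smallmatrix}\right)$), or more quickly by a projection-formula / adjointness argument: compose the first identity with the appropriate degeneracy map and use that $\pi^{N}_{N/p,p}\circ\pi^{N*}_{N/p,p}$ and $\pi^{N}_{N/p,1}\circ\pi^{N*}_{N/p,1}$ are multiplication by $\deg=p$ (as $p\mid N/p$, these degeneracy maps have degree $p$, not $p+1$).

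The main obstacle I anticipate is bookkeeping of the level structures under the chain of isogenies — i.e.\ checking that the cyclic subgroup $(C+D)/D$ of $E/D$ really is the image of the level structure that $\pi^{N*}_{N/p,p}\circ\pi^{N}_{N/p,1*}$ produces, including getting the prime-to-$p$ part along for the ride correctly (here Lemma~\ref{lemma1}'s observation that away from $p$ nothing moves is the relevant fact). The reasoning is essentially the $p^2\mid N$ analogue of the delicate computation in the proof of Lemma~\ref{lemma2}, where one identifies $E_i\simeq E/D_i$ and tracks $C_i[p]\mapsto E[p]/D_i$; I would reuse that argument nearly verbatim, the simplification being that the $w_p$-term drops out because the relevant "missing" order-$p$ subgroup now lies inside $C$ rather than meeting it trivially. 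Once that identification is in hand, both displayed equations follow by matching the two divisor expressions termwise.
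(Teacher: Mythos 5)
Your route is genuinely different from the paper's. The paper does not redo the moduli-theoretic bookkeeping of Lemma~\ref{lemma2}: it passes to the complex uniformization, observes that the fibre of $\pi^{N}_{N/p,p}$ over a generic point $[\BC/\BZ z+\BZ,\langle\frac{1}{N/p}\rangle]$ consists of the $p$ points $\frac{z+k}{p}$, $k=0,\dots,p-1$, so that $\pi^{N*}_{N/p,p}$ is given by $\sum_{k=0}^{p-1}\left(\begin{smallmatrix}1&k\\0&p\end{smallmatrix}\right)$, and then notes that both composites in the statement are given by this same sum of matrices, which equals $T_p$ at level $N$ and at level $N/p$ alike because $p$ divides both levels. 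That single matrix identity yields both displayed equations at once, which is why the paper's proof is short. Your moduli-theoretic computation, modelled on the proof of Lemma~\ref{lemma2}, is a legitimate alternative; it costs you the level-structure bookkeeping twice but keeps the argument on the moduli problem throughout.

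Two points in your write-up need repair. First, your opening paragraph asserts that in $\pi^{N*}_{N/p,p}\circ\pi^{N}_{N/p,1*}$ the subgroup $D$ ``ranges over all $p+1$ order-$p$ subgroups with no coprimality constraint surviving''; that is false and would contradict the identity being proved, since $T^{\Gamma_0(N)}_p$ has only $p$ terms when $p\mid N$. The correct statement, which you do reach in item (ii), is that $\deg\pi^{N}_{N/p,p}=p$ (because $p\mid N/p$), so the fibre over $[E,C[N/p]]$ has exactly $p$ points, and these are precisely the $[E/D,(C+D)/D]$ with $D\cap C=0$, i.e.\ $D\neq C[p]$; for $D=C[p]$ the group $(C+D)/D=C/C[p]$ has order only $N/p$, and the degree count excludes any further point of the fibre. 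Delete the contradictory sentence and let the degree count carry that step. Second, the proposed ``projection-formula'' shortcut for $T^{\Gamma_0(N/p)}_p=\pi^{N}_{N/p,1*}\circ\pi^{N*}_{N/p,p}$ does not close up: composing the first identity with degeneracy maps introduces extraneous factors of $\deg=p$ and presupposes a push--pull compatibility for $T_p$ which is essentially what is being proved. Use your first alternative instead — the direct moduli (or matrix) computation at level $N/p$, where again $p\mid N/p$ makes $T_p$ the $p$-term ramified operator.
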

\begin{proof}
It suffices to verify the above equality after base changing to $\BC$. For any (generic) point $z\in\CH$, we have thpat
\[\pi^{N*}_{N/p,p}\left([\BC/\BZ\cdot z+\BZ,\langle\frac{1}{N/p}\rangle]\right)=\sum[\BC/\BZ\cdot z'+\BZ,\langle\frac{1}{N}\rangle],\]
where the sum on the right hand side runs over all those points which are mapped to $[\BC/\BZ\cdot z+\BZ,\langle\frac{1}{N/p}\rangle]$ by $\pi^{N}_{N/p,p}$. Thus, for any $z'$, there is some $\gamma\in\Gamma_0(N/p)$ such that $pz'=\gamma(z)$, and hence
\begin{align*}
z'=\left(
     \begin{array}{cc}
       p^{-1} & 0 \\
       0 & 1 \\
     \end{array}
   \right)\gamma\left(
            \begin{array}{cc}
              1 & -k \\
              0 & 1 \\
            \end{array}
          \right)\left(
                   \begin{array}{cc}
                     p & 0 \\
                     0 & 1 \\
                   \end{array}
                 \right)(\frac{z+k}{p}),
\end{align*}
where $k\in\{0,1,...,p-1\}$ is the unique integer such that
\[\left(
     \begin{array}{cc}
       p^{-1} & 0 \\
       0 & 1 \\
     \end{array}
   \right)\gamma\left(
            \begin{array}{cc}
              1 & -k \\
              0 & 1 \\
            \end{array}
          \right)\left(
                   \begin{array}{cc}
                     p & 0 \\
                     0 & 1 \\
                   \end{array}
                 \right)\in\Gamma_0(N).\]
It follows that $[\BC/\BZ\cdot z'+\BZ,\langle\frac{1}{N}\rangle]=[\BC/\BZ\cdot\frac{z+k}{p}+\BZ,\langle\frac{1}{N}\rangle]$, and therefore
\[\pi^{N*}_{N/p,p}=\sum^{p-1}_{k=0}\left(
                                     \begin{array}{cc}
                                       1 & k \\
                                       0 & p \\
                                     \end{array}
                                   \right),
\]
which implies that both $\pi^{N}_{N/p,1*}\circ\pi^{N*}_{N/p,p}$ and $\pi^{N*}_{N/p,p}\circ\pi^{N}_{N/p,1*}$ are analytically given as $\sum^{p-1}_{k=0}\left(
                                                                                                                                      \begin{array}{cc}
                                                                                                                                        1 & k \\
                                                                                                                                        0 & p \\
                                                                                                                                      \end{array}
                                                                                                                                    \right)
$ and hence prove the lemma.
\end{proof}

\begin{cor}\label{cor2}
If $p$ is a prime such that $p^2\mid N$, then we have
\begin{align*}
  T^{\Gamma_0(N)}_p\circ(\pi^{N*}_{N/p,1}\ \pi^{N*}_{N/p,p})=(\pi^{N*}_{N/p,1}\ \pi^{N*}_{N/p,p})\circ\left(
                                                                                                   \begin{array}{cc}
                                                                                                     0 & 0 \\
                                                                                                     p & {T^{\Gamma_0(N/p)}_p} \\
                                                                                                   \end{array}
                                                                                                 \right).
\end{align*}
Therefore both $J_0(N)_\text{p-old}$ and $J_0(N)^\text{p-new}$ are stable under the action of $\BT_0(N)$. In particular we have $T^{\Gamma_0(N)}_p=0$ on $J_0(N)^\text{p-new}$.
\end{cor}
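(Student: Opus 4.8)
The plan is to follow the proof of Corollary~\ref{cor1} almost verbatim, with Lemma~\ref{lemma3} replacing Lemma~\ref{lemma2}; the only numerical change is that here $\deg(\pi^{N}_{N/p,1})=p$ rather than $p+1$, since $p^{2}\mid N$ forces $N/p$ and $N$ to have the same set of prime divisors, so that $[\Gamma_0(N/p):\Gamma_0(N)]=p$. First I would compute the two relevant compositions. By the first identity of Lemma~\ref{lemma3} we have $T^{\Gamma_0(N)}_p=\pi^{N*}_{N/p,p}\circ\pi^{N}_{N/p,1*}$, hence
\[
T^{\Gamma_0(N)}_p\circ\pi^{N*}_{N/p,1}=\pi^{N*}_{N/p,p}\circ\bigl(\pi^{N}_{N/p,1*}\circ\pi^{N*}_{N/p,1}\bigr)=p\cdot\pi^{N*}_{N/p,p},
\]
the last equality because $\pi^{N}_{N/p,1*}\circ\pi^{N*}_{N/p,1}$ is multiplication by $\deg(\pi^{N}_{N/p,1})=p$. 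Using both identities of Lemma~\ref{lemma3},
\[
T^{\Gamma_0(N)}_p\circ\pi^{N*}_{N/p,p}=\pi^{N*}_{N/p,p}\circ\bigl(\pi^{N}_{N/p,1*}\circ\pi^{N*}_{N/p,p}\bigr)=\pi^{N*}_{N/p,p}\circ T^{\Gamma_0(N/p)}_p.
\]
Assembling these as a single identity of maps $J_0(N/p)^{2}\to J_0(N)$ along the row vector $(\pi^{N*}_{N/p,1}\ \pi^{N*}_{N/p,p})$ yields exactly the displayed formula with matrix $\matrixx{0}{0}{p}{T^{\Gamma_0(N/p)}_p}$.

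For the two stability claims I would argue as in Corollary~\ref{cor1}. By Lemma~\ref{lemma1} each Hecke operator $T^{\Gamma_0(N)}_\ell$ with $\ell\neq p$ commutes with both $\pi^{N*}_{N/p,1}$ and $\pi^{N*}_{N/p,p}$, hence stabilizes $J_0(N)_\text{p-old}=\Im(\iota_p)$; and the matrix identity just proved shows that $T^{\Gamma_0(N)}_p$ also carries $\Im(\iota_p)$ into itself. Since $\BT_0(N)$ is generated by the $T^{\Gamma_0(N)}_\ell$, the subvariety $J_0(N)_\text{p-old}$ is $\BT_0(N)$-stable, so the Hecke action descends to $J_0(N)^\text{p-new}=\coker(\iota_p)$. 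Finally, the first identity of Lemma~\ref{lemma3} exhibits $T^{\Gamma_0(N)}_p$ as $\pi^{N*}_{N/p,p}\circ\pi^{N}_{N/p,1*}$, whose image is contained in $\Im(\pi^{N*}_{N/p,p})\subseteq J_0(N)_\text{p-old}$; therefore $T^{\Gamma_0(N)}_p$ acts as $0$ on $J_0(N)^\text{p-new}$.

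I do not expect a genuine obstacle: this corollary is the $p^{2}\mid N$ analogue of Corollary~\ref{cor1} and its proof is essentially a transcription. The only step beyond formal bookkeeping is the value $\deg(\pi^{N}_{N/p,1})=p$. This can be checked directly on moduli --- the fibre of $\pi^{N}_{N/p,1}$ over a point $[E,C']$, with $C'$ cyclic of order $N/p$, consists of the $p$ cyclic subgroups $C$ of order $N$ with $C[N/p]=C'$, since every primary part of $C$ away from $p$ is forced to equal that of $C'$ while the cyclic subgroups of $E[p^{v_p(N)}]$ of order $p^{v_p(N)}$ restricting to a fixed cyclic subgroup of order $p^{v_p(N)-1}$ number exactly $p$ --- or simply read off from the proof of Lemma~\ref{lemma3}, where $\pi^{N*}_{N/p,p}$ is computed analytically as the $p$-term sum $\sum_{k=0}^{p-1}\matrixx{1}{k}{0}{p}$.
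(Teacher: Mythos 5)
Your proposal is correct and follows the paper's argument in all essentials: both rest on the two identities of Lemma~\ref{lemma3} together with $\deg(\pi^{N}_{N/p,1})=p$, and the stability and vanishing claims are handled identically. The only (harmless) difference is in the second column of the matrix identity, where you substitute Lemma~\ref{lemma3} directly to get $T^{\Gamma_0(N)}_p\circ\pi^{N*}_{N/p,p}=\pi^{N*}_{N/p,p}\circ T^{\Gamma_0(N/p)}_p$, whereas the paper derives it by computing $(T^{\Gamma_0(N)}_p)^2$ two ways and cancelling the surjection $\pi^{N}_{N/p,1*}$ --- your route is in fact slightly cleaner.
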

\begin{proof}
Since $p\mid N/p$, we find that $\deg(\pi^{N}_{N/p,1})=p$. Then it follows from Lemma~\ref{lemma3} that
\begin{align*}
T^{\Gamma_0(N)}_p\circ\pi^{N*}_{N/p,1}&=\pi^{N*}_{N/p,p}\circ\pi^{N}_{N/p,1*}\circ\pi^{N*}_{N/p,1}\\
&=\pi^{N*}_{N/p,p}\circ[p].
\end{align*}
On the other hand, since
\begin{align*}
(T^{\Gamma_0(N)}_p)^2&=T^{\Gamma_0(N)}_p\circ\pi^{N*}_{N/p,p}\circ\pi^{N}_{N/p,1*}\\
&=\pi^{N*}_{N/p,p}\circ\pi^{N}_{N/p,1*}\circ\pi^{N*}_{N/p,p}\circ\pi^{N}_{N/p,1*}\\
&=\pi^{N*}_{N/p,p}\circ T^{\Gamma_0(N/p)}_p\circ\pi^{N}_{N/p,1*},
\end{align*}
we find that $T^{\Gamma_0(N)}_p\pi^{N*}_{N/p,p}\circ\pi^{N}_{N/p,1*}=\pi^{N*}_{N/p,p}\circ T^{\Gamma_0(N/p)}_p\circ\pi^{N}_{N/p,1*}$, and hence $T^{\Gamma_0(N)}_p\pi^{N*}_{N/p,p}=\pi^{N*}_{N/p,p}\circ T^{\Gamma_0(N/p)}_p$ because $\pi^{N}_{N/p,1*}$ is surjective. The other assertions are then clear from the definitions.
\end{proof}

\begin{defn}\label{def1}
Let $N$ be a positive integer. Then for any prime $p\mid N$ we define
\begin{align*}
\begin{cases}
\BT_0(N)^\text{p-old}:=\Im\left(\BT_0(N)\rightarrow\End_{\BQ}(J_0(N)_\text{p-old})\right)\\
\BT_0(N)^\text{p-new}:=\Im\left(\BT_0(N)\rightarrow\End_{\BQ}(J_0(N)^\text{p-new})\right),
\end{cases}
\end{align*}
which are called as \textbf{the p-old quotient} and \textbf{the p-new quotient} of $\BT_0(N)$ respectively. Thus there are two $\BZ$-algebra surjections
\begin{align*}
\begin{cases}
\BT_0(N)\twoheadrightarrow\BT_0(N)^\text{p-old}\\
\BT_0(N)\twoheadrightarrow\BT_0(N)^\text{p-new},
\end{cases}
\end{align*}
which combine to give an injection
\begin{align*}
\BT_0(N)\hookrightarrow\BT_0(N)^\text{p-old}\times\BT_0(N)^\text{p-new}.
\end{align*}
\end{defn}

\textbf{3.3. }The $\BZ$-algebras $\BT_0(N)^\text{p-old}$ and $\BT_0(N/p)$ are closely related. By Lemma~\ref{lemma1}, $\iota_p=\pi^{N*}_{N/p,1}+\pi^{N*}_{N/p,p}$ commutes with the $\ell$-th Hecke operators for any $\ell\neq p$. Let
\[R:=\BZ[\{T^{\Gamma_0(N/p)}_\ell\}_{\ell\neq p}].\]
Then $R$ acts on $J_0(N/p)^2$ diagonally, and hence on $J_0(N)_\text{p-old}$ via $\iota_p$. It follows that there is an injection $R\hookrightarrow\End_{\BQ}(J_0(N)_\text{p-old})$, and we have that
\[\BT_0(N)^\text{p-old}=R[T^{\Gamma_0(N)}_p].\]
By Corollary~\ref{cor1}, if $p$ is a prime with $p\| N$, then the image of the $p$-th Hecke operator in $\BT_0(N)^\text{p-old}$ satisfies the equation $x^2-T^{\Gamma_0(N/p)}x+p=0$, that is to say, we have
\begin{align}\label{2.3}
(T^{\Gamma_0(N)}_p)^2-T^{\Gamma_0(N/p)}\cdot T^{\Gamma_0(N)}_p+p=0
\end{align}
in $\BT_0(N)^\text{p-old}$. Moreover by the lemma on P495 in \cite{Wiles} we have $\BT_0(N/p)[\frac{1}{2}]=R[\frac{1}{2}]$, and $\BT_0(N/p)=R$ if $p\neq2$. So we have that
\begin{align}\label{2.4}
\BT_0(N)^\text{p-old}[\frac{1}{2}]\simeq\BT_0(N/p)[\frac{1}{2},x]/(x^2-T^{\Gamma_0(N/p)}x+p);
\end{align}
and if $p\neq2$, then we have that
\begin{align}\label{2.5}
\BT_0(N)^\text{p-old}\simeq\BT_0(N/p)[x]/(x^2-T^{\Gamma_0(N/p)}x+p).
\end{align}
\\

\textbf{3.4. }Hereafter we assume that $p^2\mid N$. Define for any $i=0,1,2,...,v_p(N)-1$
\begin{align*}
  J_0(N)^{(i)}_\text{p-old}:&=\pi^{N*}_{N/p^i,1}(J_0(N/p^i)_\text{p-old})+\pi^{N*}_{N/p,p}(J_0(N/p))\\
  &=\pi^{N*}_{N/p^{i+1},1}(J_0(N/p^{i+1}))+\pi^{N*}_{N/p,p}(J_0(N/p)).
\end{align*}
where $J_0(N)^{(-1)}_\text{p-old}:=J_0(N)$. It follows that there is a filtration
\begin{align*}
  J_0(N)\supseteq J_0(N)^{(0)}_\text{p-old}\supseteq J_0(N)^{(1)}_\text{p-old}\supseteq...\supseteq J_0(N)^{(n_p-1)}_\text{p-old},
\end{align*}
and we denote the sub-quotients as
\[J_0(N)^\text{p-new}_{(i)}:=J_0(N)^{(i-1)}_\text{p-old}/J_0(N)^{(i)}_\text{p-old}\]
for any $i=0,...,v_p(N)-1$. 

\begin{lem}\label{lemma4}
For any $i=0,1,...,n_p-1$, both $J_0(N)^{(i)}_\text{p-old}$ and $J_0(N)^{(i)}_\text{p-new}$ are stable under the action of $\BT_0(N)$.
\end{lem}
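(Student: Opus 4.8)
The plan is to reduce the statement to stability under the individual Hecke operators $T^{\Gamma_0(N)}_\ell$ with $\ell$ prime, since these generate $\BT_0(N)$ as a $\BZ$-algebra, and then to treat the cases $\ell\neq p$ and $\ell=p$ separately, working throughout with the description from \S3.4,
\[J_0(N)^{(i)}_\text{p-old}=\pi^{N*}_{N/p^{i+1},1}\bigl(J_0(N/p^{i+1})\bigr)+\pi^{N*}_{N/p,p}\bigl(J_0(N/p)\bigr),\]
and checking that each of the two summands is carried into this sum.

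For $\ell\neq p$ I would first factor the multi-step degeneracy map, using equation $(1)$ iteratively, as $\pi^N_{N/p^{i+1},1}=\pi^{N/p^i}_{N/p^{i+1},1}\circ\cdots\circ\pi^{N/p}_{N/p^2,1}\circ\pi^N_{N/p,1}$, and pass to Picard functoriality so that $\pi^{N*}_{N/p^{i+1},1}$ becomes the corresponding composite of the pullbacks $\pi^{(N/p^j)*}_{N/p^{j+1},1}$. Applying Lemma~\ref{lemma1} at each of the intermediate levels $N,N/p,\dots,N/p^i$ — note $p$ still divides each of these since $i\le n_p-1$ and $p^2\mid N$, so the lemma applies with the same prime $p\neq\ell$ at every stage — and composing the resulting intertwining identities shows that $\pi^{N*}_{N/p^{i+1},1}$ carries $T^{\Gamma_0(N/p^{i+1})}_\ell$ to $T^{\Gamma_0(N)}_\ell$; hence $T^{\Gamma_0(N)}_\ell$ stabilizes the first summand. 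Lemma~\ref{lemma1} applied directly at level $N$ gives $T^{\Gamma_0(N)}_\ell\circ\pi^{N*}_{N/p,p}=\pi^{N*}_{N/p,p}\circ T^{\Gamma_0(N/p)}_\ell$, so the second summand is stabilized too, and therefore so is the sum $J_0(N)^{(i)}_\text{p-old}$.

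For $\ell=p$ I would invoke Lemma~\ref{lemma3} — this is precisely where the standing hypothesis $p^2\mid N$ is used — which gives $T^{\Gamma_0(N)}_p=\pi^{N*}_{N/p,p}\circ\pi^N_{N/p,1*}$. Consequently $\Im\bigl(T^{\Gamma_0(N)}_p\bigr)\subseteq\pi^{N*}_{N/p,p}(J_0(N/p))$, which is one of the two summands of $J_0(N)^{(i)}_\text{p-old}$; thus $T^{\Gamma_0(N)}_p$ maps all of $J_0(N)$, a fortiori $J_0(N)^{(i)}_\text{p-old}$, into $J_0(N)^{(i)}_\text{p-old}$. Combining the two cases yields stability of each $J_0(N)^{(i)}_\text{p-old}$ under $\BT_0(N)$, and the stability of the sub-quotients $J_0(N)^\text{p-new}_{(i)}=J_0(N)^{(i-1)}_\text{p-old}/J_0(N)^{(i)}_\text{p-old}$ follows formally, as a quotient of a Hecke-stable object by a Hecke-stable subobject.

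Rather than a genuine obstacle, the main point requiring care is the bookkeeping in the case $\ell\neq p$: one must check that the iterated application of Lemma~\ref{lemma1} really produces a clean intertwining of $T_\ell$ on $J_0(N/p^{i+1})$ with $T_\ell$ on $J_0(N)$, and that every intermediate level $N/p^j$ (for $0\le j\le i$) still has $p$ as a divisor so that the lemma is applicable there. The case $\ell=p$, by contrast, is essentially immediate once one observes via Lemma~\ref{lemma3} that, under $p^2\mid N$, the image of $T_p^{\Gamma_0(N)}$ already lands inside the $\pi^{N*}_{N/p,p}$-summand of every $J_0(N)^{(i)}_\text{p-old}$.
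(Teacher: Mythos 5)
Your proposal is correct and follows essentially the same route as the paper: iterated application of Lemma~\ref{lemma1} for $\ell\neq p$, and for $\ell=p$ the observation (via Lemma~\ref{lemma3}, using $p^2\mid N$) that the image of $T^{\Gamma_0(N)}_p$ already lies in the summand $\pi^{N*}_{N/p,p}(J_0(N/p))\subseteq J_0(N)^{(i)}_\text{p-old}$ — the paper reaches the same containment through the intertwining relations of Corollary~\ref{cor2}, and itself uses your exact shortcut later in Proposition~\ref{decomposition-prop}. The bookkeeping you flag (that $p$ divides every intermediate level $N/p^j$ for $j\le i\le n_p-1$) is handled correctly.
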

\begin{proof}
It suffices to prove that $J_0(N)^{(i)}_\text{p-old}$ is stable. By Lemma~\ref{lemma1} we find that both $\pi^{N*}_{N/p^{i+1},1}=\pi^{N*}_{N/p,1}\circ...\circ\pi^{N/p^i*}_{N/p^{i+1},1}$ and $\pi^{N*}_{N/p,p}$ commute with the $\ell$-the Hecke operators for any prime $\ell\neq p$. Since we have that
\[T^{\Gamma_0(N)}_p\circ\pi^{N*}_{N/p,p}=\pi^{N*}_{N/p,p}\circ T^{\Gamma_0(N/p)}_p\]
and that
\begin{align*}
  T^{\Gamma_0(N)}\circ\pi^{N*}_{N/p^{i+1},1}&=T^{\Gamma_0(N)}\circ\pi^{N*}_{N/p,1}\circ...\circ\pi^{N/p^i*}_{N/p^{i+1},1}\\
  &=p\cdot\pi^{N*}_{N/p,p}\circ...\circ\pi^{N/p^i*}_{N/p^{i+1},1},
\end{align*}
by Corollary~\ref{cor2}, it follows that $T^{\Gamma_0(N)}_p(J_0(N)^{(i)}_\text{p-old})\subseteq\pi^{N*}_{N/p,p}(J_0(N/p))\subseteq J_0(N)^{(i)}_\text{p-old}$, which completes the proof.
\end{proof}

Let $n_p=v_p(N)$ and we \textbf{assume that $p\neq2$} so that  $T^{\Gamma_0(N/p^{n_p})}_p\in\BT_0(N/p^{n_p-1})^\text{p-old}$. Let
\begin{align*}
\epsilon_p:=T^{\Gamma_0(N/p^{n_p-1})}_p-T^{\Gamma_0(N/p^{n_p})}_p;
\end{align*}
and for any $i=1,...,n_p-1$, we let
\begin{align*}
\tau_{p,i}:=\pi^{N*}_{N/p^{n_p-1},p^{i-1}}+\pi^{N*}_{N/p^{n_p-1},p^{i}}\circ\epsilon_p.
\end{align*}
Then we define
\begin{align*}
J_0(N)^{(n_p)}_\text{p-old}:=\sum^{n_p-1}_{i=1}\tau_{p,i}(J_0(N/p^{n_p-1})^2_\text{p-old}).
\end{align*}

\begin{lem}\label{lemma6}
$J_0(N)^{(n_p)}_\text{p-old}$ is stable under the action of $\BT_0(N)$. Moreover, $(T^{\Gamma_0(N)}_p)^{n_p-1}$ acts as zero on $J_0(N)^{(n_p)}_\text{p-old}$.
\end{lem}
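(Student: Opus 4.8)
The plan is to verify $\BT_0(N)$-stability on the generators $T^{\Gamma_0(N)}_\ell$ of $\BT_0(N)$ one at a time, and then to read off the nilpotence of $T^{\Gamma_0(N)}_p$ from the same computation. Write $M = N/p^{n_p-1}$, so that $p \parallel M$; by Corollary~\ref{cor1} the subvariety $J_0(M)_{\text{p-old}}$ is stable under $\BT_0(M)$, and I shall view each $\tau_{p,i} = \pi^{N*}_{M,p^{i-1}} + \pi^{N*}_{M,p^{i}}\circ\epsilon_p$ as a homomorphism out of $J_0(M)_{\text{p-old}}$, the images of these homomorphisms for $i = 1,\dots,n_p-1$ generating $J_0(N)^{(n_p)}_{\text{p-old}}$. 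The case $\ell \neq p$ is routine: by Lemma~\ref{lemma1}, applied level by level along the tower $N, N/p, \dots, M$ (each term divisible by $p$), $T^{\Gamma_0(N)}_\ell$ commutes with the degeneracy maps $\pi^{N*}_{M,p^{j}}$, being carried over to $T^{\Gamma_0(M)}_\ell$; and $T^{\Gamma_0(M)}_\ell$ commutes with $\epsilon_p$ on $J_0(M)_{\text{p-old}}$ since both live in the commutative ring $\BT_0(M)^{\text{p-old}}$. Hence $T^{\Gamma_0(N)}_\ell \circ \tau_{p,i} = \tau_{p,i}\circ T^{\Gamma_0(M)}_\ell$, which preserves $J_0(N)^{(n_p)}_{\text{p-old}}$.

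For $\ell = p$ the key inputs come from Corollary~\ref{cor2} and the composition law for degeneracy morphisms. I would first establish two identities. For $0 \leq j \leq n_p - 2$,
\[
T^{\Gamma_0(N)}_p \circ \pi^{N*}_{M,p^{j}} = p\cdot\pi^{N*}_{M,p^{j+1}},
\]
obtained from the single relation $T^{\Gamma_0(N)}_p \circ \pi^{N*}_{N/p,1} = p\,\pi^{N*}_{N/p,p}$ of Corollary~\ref{cor2} (legitimate as $p^2\mid N$) after precomposing with $\pi^{(N/p)*}_{M,p^{j}}$ and recomposing via $\pi^{N*}_{N/p,p}\circ\pi^{(N/p)*}_{M,p^{j}} = \pi^{N*}_{M,p^{j+1}}$; and
\[
T^{\Gamma_0(N)}_p \circ \pi^{N*}_{M,p^{n_p-1}} = \pi^{N*}_{M,p^{n_p-1}} \circ T^{\Gamma_0(M)}_p,
\]
obtained by writing $\pi^{N*}_{M,p^{n_p-1}}$ as the composite of the $n_p-1$ one-step twists $\pi^{(N/p^{t})*}_{N/p^{t+1},p}$ ($t = 0,\dots,n_p-2$) and pushing $T_p$ through them one level at a time with Corollary~\ref{cor2}, which applies at each level $N/p^{t}$ because $p^2\mid N/p^{t}$ for $t\leq n_p-2$. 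Finally I would record the algebraic identity $\epsilon_p\cdot T^{\Gamma_0(M)}_p = -p$ in $\BT_0(M)^{\text{p-old}}$: by \eqref{2.5} (valid since $p\neq 2$) the image of $T^{\Gamma_0(M)}_p$ is a root $x$ of $x^2 - T^{\Gamma_0(M/p)}_p x + p$, so $\epsilon_p x = x^2 - T^{\Gamma_0(M/p)}_p x = -p$; in particular $T^{\Gamma_0(M)}_p\circ\epsilon_p = -p\cdot\mathrm{id}$ on $J_0(M)_{\text{p-old}}$.

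Assembling these: for $1\leq i \leq n_p-2$ the first identity gives $T^{\Gamma_0(N)}_p \circ \tau_{p,i} = p\,\pi^{N*}_{M,p^{i}} + p\,\pi^{N*}_{M,p^{i+1}}\circ\epsilon_p = p\cdot\tau_{p,i+1}$, whereas for $i = n_p-1$ the two identities together with $\epsilon_p T^{\Gamma_0(M)}_p = -p$ yield, on $J_0(M)_{\text{p-old}}$,
\[
T^{\Gamma_0(N)}_p \circ \tau_{p,n_p-1} = p\,\pi^{N*}_{M,p^{n_p-1}} + \pi^{N*}_{M,p^{n_p-1}}\circ T^{\Gamma_0(M)}_p\circ\epsilon_p = p\,\pi^{N*}_{M,p^{n_p-1}} - p\,\pi^{N*}_{M,p^{n_p-1}} = 0.
\]
Thus $T^{\Gamma_0(N)}_p$ sends $\tau_{p,i}(J_0(M)_{\text{p-old}})$ into $\tau_{p,i+1}(J_0(M)_{\text{p-old}})$ for $i < n_p-1$ and kills $\tau_{p,n_p-1}(J_0(M)_{\text{p-old}})$, giving $T^{\Gamma_0(N)}_p$-stability and hence, with the first paragraph, $\BT_0(N)$-stability of $J_0(N)^{(n_p)}_{\text{p-old}}$. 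Iterating the shift and then using the vanishing at the top, $(T^{\Gamma_0(N)}_p)^{n_p-i}\circ\tau_{p,i} = p^{n_p-1-i}\bigl(T^{\Gamma_0(N)}_p\circ\tau_{p,n_p-1}\bigr) = 0$ for every $i$, and since $n_p-i \leq n_p-1$ this gives $(T^{\Gamma_0(N)}_p)^{n_p-1}\circ\tau_{p,i} = 0$ for all $i$; hence $(T^{\Gamma_0(N)}_p)^{n_p-1}$ annihilates $J_0(N)^{(n_p)}_{\text{p-old}}$.

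The step I expect to be the main obstacle is the second degeneracy-map identity $T^{\Gamma_0(N)}_p\circ\pi^{N*}_{M,p^{n_p-1}} = \pi^{N*}_{M,p^{n_p-1}}\circ T^{\Gamma_0(M)}_p$: it needs a careful bookkeeping of the $(n_p-1)$-fold factorization of the degeneracy morphism and a verification that Corollary~\ref{cor2} genuinely applies at each intermediate level, and one must be mindful that $\epsilon_p\cdot T^{\Gamma_0(M)}_p = -p$ is an identity precisely on the $p$-old part — which is the part on which $\tau_{p,n_p-1}$ is defined. Once that is in hand, the crucial cancellation $T^{\Gamma_0(N)}_p\circ\tau_{p,n_p-1} = 0$, the one place where the particular shape of $\epsilon_p$ is needed, is immediate, and everything else is formal.
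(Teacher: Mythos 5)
Your proof is correct and follows essentially the same route as the paper: the shift identity $T^{\Gamma_0(N)}_p\circ\tau_{p,i}=p\cdot\tau_{p,i+1}$ for $i\leq n_p-2$ via Corollary~\ref{cor2}, the cancellation $T^{\Gamma_0(N)}_p\circ\tau_{p,n_p-1}=0$ via $T^{\Gamma_0(M)}_p\circ\epsilon_p=-p$ on the $p$-old part, and the resulting nilpotence. Your explicit verification of $T^{\Gamma_0(N)}_p\circ\pi^{N*}_{M,p^{n_p-1}}=\pi^{N*}_{M,p^{n_p-1}}\circ T^{\Gamma_0(M)}_p$ by factoring through the intermediate levels is a step the paper uses only implicitly, so it is a welcome addition rather than a deviation.
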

\begin{proof}
By Lemma~\ref{lemma1}, $J_0(N)^\text{p-old}_{(n_p)}$ is stable under the action of $\ell$-th Hecke operator for any prime $\ell\neq p$. If $i\leq n_p-2$, then we have
\begin{align*}
T^{\Gamma_0(N)}_p\circ\pi^{N*}_{N/p^{n_p-1},p^i}=&T^{\Gamma_0(N)}_p\circ\pi^{N*}_{N/p,1}\circ\pi^{N/p*}_{N/p^{n_p-1},p^{i}}\\
=&p\cdot\pi^{N*}_{N/p,p}\circ\pi^{N/p*}_{N/p^{n_p-1},p^{i}}\\
=&p\cdot\pi^{N*}_{N/p^{n_p-1},p^{i+1}},
\end{align*}
so that
\begin{align*}
T^{\Gamma_0(N)}_p\circ\tau_{p,i}&=T^{\Gamma_0(N)}_p\circ(\pi^{N*}_{N/p^{n_p-1},p^{i-1}}+\pi^{N*}_{N/p^{n_p-1},p^{i}}\circ\epsilon_p)\\
&=p\cdot(\pi^{N*}_{N/p^{n_p-1},p^{i}}+\pi^{N*}_{N/p^{n_p-1},p^{i+1}}\circ\epsilon_p)\\
&=p\cdot\tau_{p,i+1}.
\end{align*}
Moreover, since
\begin{align*}
T^{\Gamma_0(N)}_p\circ\tau_{p,n_p-1}&=T^{\Gamma_0(N)}_p\circ(\pi^{N*}_{N/p^{n_p-1},p^{n_p-2}}+\pi^{N*}_{N/p^{n_p-1},p^{n_p-1}}\circ\epsilon_p)\\
&=\pi^{N*}_{N/p^{n_p-1},p^{n_p-1}}\circ(p+T^{\Gamma_0(N/p^{n_p-1})}_p\circ\epsilon_p)\\
&=0,
\end{align*}
we find that $J_0(N)^\text{p-old}_{(n_p)}$ is also stable under the action of $T^{\Gamma_0(N)}_p$ and hence under the action of $\BT_0(N)$, and by the way also  prove the second assertion.
\end{proof}

\begin{defn}\label{hecke old and new}
Let $N$ be a positive integer. If $p$ is a prime with $p$ a prime with $n_p=v_p(N)\geq2$, then we define
\begin{align}\label{def-old and new}
\begin{cases}
\BT_0(N)^\text{p-old}_{(i)}:=\Im(\BT_0(N)\rightarrow\End_\BQ(J_0(N)^{(i)}_{\text{p-old}}))\\
\BT_0(N)^\text{p-new}_{(i)}:=\Im(\BT_0(N)\rightarrow\End_\BQ(J_0(N)^{\text{p-new}}_{(i)}))
\end{cases}
\end{align}
for any $i=0,...,v_p(N)-1$, and define
\begin{align}
  \BT_0(N)^\text{p-old}_{(n_p)}:=\Im(\BT_0(N)\rightarrow\End_\BQ(J_0(N)^{(n_p)}_\text{p-old})).
\end{align}
Note that $\BT_0(N)^\text{p-old}_{(0)}=\BT_0(N)^\text{p-old}$ and $\BT_0(N)^\text{p-new}_{(0)}=\BT_0(N)^\text{p-new}$. 
\end{defn}

\begin{prop}\label{decomposition-prop}
Let $N$ be a positive integer and $p$ an odd prime with $n_p=v_p(N)\geq2$. Then there are $\BZ$-algebra injections
\begin{align*}
\begin{cases}
  \BT_0(N)\hookrightarrow\BT_0(N)^\text{p-new}\times\BT_0(N)^\text{p-old}\\
  \BT_0(N)^\text{p-old}\hookrightarrow(\prod^{n_p-1}_{i=1}\BT_0(N)^\text{p-new}_{(i)})\times\BT_0(N)^\text{p-old}_{(n_p)}\times\BT_0(N/p),
\end{cases}
\end{align*}
such that
\begin{enumerate}
  \item For any $i=1,...,n_p-1$, we have $T^{\Gamma_0(N)}_p\mapsto0$ under $\BT_0(N)\twoheadrightarrow\BT_0(N)^\text{p-new}_{(i)}$. Moreover , $\BT_0(N)^\text{p-new}_{(i)}$ is a quotient of $\BT_0(N/p^i)^\text{p-new}$;
  \item Under $\BT_0(N)^\text{p-old}\twoheadrightarrow\BT_0(N/p)$, we have $T^{\Gamma_0(N)}_\ell\mapsto T^{\Gamma_0(N/p)}_\ell$ for any prime $\ell$;
  \item Under $\BT_0(N)^\text{p-old}\twoheadrightarrow\BT_0(N)^\text{p-old}_{(n_p)}$, we have $T^{\Gamma_0(N)}_\ell\mapsto0$ for any prime $\ell\neq p$ and $(T^{\Gamma_0(N)}_p)^{n_p-1}\mapsto0$.
\end{enumerate}
\end{prop}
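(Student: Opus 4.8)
The first injection is the one already recorded in Definition~\ref{def1}: it is the product of the two quotient maps, and an element $t$ in its kernel acts as zero on $J_0(N)_{\text{p-old}}$ and on $J_0(N)^{\text{p-new}}$. Since the $p$-old part of $H_1(X_0(N),\BQ)$ is stable under all the $T_\ell$ and under all Atkin--Lehner involutions, the polarization pairing furnishes a $\BT_0(N)$-stable complement, so $J_0(N)$ is $\BT_0(N)$-equivariantly isogenous to $J_0(N)_{\text{p-old}}\times J_0(N)^{\text{p-new}}$; as $\BT_0(N)$ acts faithfully on $J_0(N)$ this forces $t=0$. The content of the proposition is the second injection, and the plan is to feed the $\BT_0(N)$-stable filtration of \S3.4 into the same mechanism.

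First I would splice the constituents of \S3.4 into one $\BT_0(N)$-stable chain. By Lemma~\ref{lemma4} and Lemma~\ref{lemma6}, the subvarieties $J_0(N)_{\text{p-old}}=J_0(N)^{(0)}_{\text{p-old}}\supseteq\cdots\supseteq J_0(N)^{(n_p-1)}_{\text{p-old}}$ and $J_0(N)^{(n_p)}_{\text{p-old}}$ are all $\BT_0(N)$-stable, and a short manipulation of composed degeneracy maps (via the composition law of \S3.1) shows $J_0(N)^{(n_p)}_{\text{p-old}}\subseteq J_0(N)^{(n_p-1)}_{\text{p-old}}$. This produces a $\BT_0(N)$-stable filtration with successive quotients $J_0(N)^{\text{p-new}}_{(1)},\dots,J_0(N)^{\text{p-new}}_{(n_p-1)}$, then $Q:=J_0(N)^{(n_p-1)}_{\text{p-old}}/J_0(N)^{(n_p)}_{\text{p-old}}$, then $J_0(N)^{(n_p)}_{\text{p-old}}$, and the map in the statement is the product of the associated quotient maps of Hecke algebras.

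It then remains to identify $\Im(\BT_0(N)\to\End_\BQ(Q))$ with $\BT_0(N/p)$, to verify (1)--(3), and to prove injectivity. For (2) and the identification of the $Q$-factor: $\pi^{N*}_{N/p,p}\colon J_0(N/p)\to J_0(N)$ has finite kernel and, by Lemma~\ref{lemma1} (for $\ell\neq p$) together with Corollary~\ref{cor2} (for $\ell=p$, using $p^2\mid N$), is $\BT_0(N)$-equivariant with $T^{\Gamma_0(N)}_\ell\mapsto T^{\Gamma_0(N/p)}_\ell$ for every $\ell$; hence $\BT_0(N)$ acts on $\pi^{N*}_{N/p,p}(J_0(N/p))$ through $\BT_0(N/p)$, and the identification of $Q$ reduces to showing $J_0(N)^{(n_p-1)}_{\text{p-old}}=\pi^{N*}_{N/p,p}(J_0(N/p))+J_0(N)^{(n_p)}_{\text{p-old}}$ with finite intersection. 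Property (1) follows from the inclusions $T^{\Gamma_0(N)}_p(J_0(N)^{(i-1)}_{\text{p-old}})\subseteq J_0(N)^{(i)}_{\text{p-old}}$ of \S3.4 and a comparison of degeneracy maps at level $N/p^i$ exhibiting $\BT_0(N)^{\text{p-new}}_{(i)}$ as a quotient of $\BT_0(N/p^i)^{\text{p-new}}$; property (3) is Lemma~\ref{lemma6} for the $T_p$-part, while $T^{\Gamma_0(N)}_\ell\mapsto 0$ ($\ell\neq p$) comes from the fact that every map in the definition of $J_0(N)^{(n_p)}_{\text{p-old}}$ commutes with $T^{\Gamma_0(\cdot)}_\ell$ and the corresponding operator dies on this quotient.

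For injectivity: an element $t$ in the kernel of the product map acts as zero on each graded quotient of the chain, so $t^{\,n_p+1}=0$ on $J_0(N)_{\text{p-old}}$; to upgrade this to $t=0$ I would show that the chain, together with the bottom decomposition, realizes $J_0(N)_{\text{p-old}}$ as $\BT_0(N)$-equivariantly isogenous to the product of its graded pieces --- which are pairwise ``Hecke-disjoint'', as one sees from the levels involved and the order of nilpotence of $T^{\Gamma_0(N)}_p$ --- whence faithfulness of $\BT_0(N)^{\text{p-old}}$ on $J_0(N)_{\text{p-old}}$ concludes. The main obstacle is the bottom: proving that $J_0(N)^{(n_p-1)}_{\text{p-old}}$ splits, $\BT_0(N)$-equivariantly up to isogeny, as $\pi^{N*}_{N/p,p}(J_0(N/p))\times J_0(N)^{(n_p)}_{\text{p-old}}$. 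This is where the precise shape of $J_0(N)^{(n_p)}_{\text{p-old}}$ --- in particular the twist by $\epsilon_p$ built into the $\tau_{p,i}$ --- must be used, via an explicit computation of the $T^{\Gamma_0(N)}_p$-action on $J_0(N)^{(n_p-1)}_{\text{p-old}}$ in terms of the degeneracy maps and an accompanying rank count; I expect this, and the equivariant isogeny splitting it feeds, to be the technical heart of the argument.
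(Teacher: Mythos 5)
Your overall architecture --- the $\BT_0(N)$-stable filtration of \S3.4, the product of the induced quotient maps on Hecke algebras, and the verification of (1)--(3) from Lemma~\ref{lemma1}, Corollary~\ref{cor2}, Lemma~\ref{lemma4} and Lemma~\ref{lemma6} --- coincides with the paper's. The divergence is at the bottom of the filtration and in how injectivity is obtained, and this is precisely where your proposal stops short of a proof. For the last factor the paper never passes to the quotient $Q=J_0(N)^{(n_p-1)}_\text{p-old}/J_0(N)^{(n_p)}_\text{p-old}$: it defines the map to $\BT_0(N/p)$ through the action of $\BT_0(N)$ on the \emph{subvariety} $\pi^{N*}_{N/p,p}(J_0(N/p))$ (isogenous to $J_0(N/p)$, with $T^{\Gamma_0(N)}_\ell\mapsto T^{\Gamma_0(N/p)}_\ell$ for all $\ell$ by Lemma~\ref{lemma1} and Corollary~\ref{cor2}), and then uses that $J_0(N)^{(n_p-1)}_\text{p-old}$ is the \emph{sum} of the two subvarieties $J_0(N)^{(n_p)}_\text{p-old}$ and $\pi^{N*}_{N/p,p}(J_0(N/p))$, so that an endomorphism vanishing on both summands vanishes on the sum. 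No finite-intersection statement and no isogeny splitting is required there; your detour through $Q$ is what manufactures the ``main obstacle'' you flag at the end.

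Concerning injectivity at the intermediate stages: your observation that killing a subvariety and the corresponding quotient only yields $t^2=0$ (hence only nilpotence of $t$ on $J_0(N)_\text{p-old}$ after running down the whole chain) is a legitimate concern, and the paper itself passes over this point when it asserts the injection $\BT_0(N)^\text{p-old}_{(i-1)}\hookrightarrow\BT_0(N)^\text{p-old}_{(i)}\times\BT_0(N)^\text{p-new}_{(i)}$ without comment. But your proposed remedy --- a $\BT_0(N)$-equivariant isogeny decomposition of $J_0(N)_\text{p-old}$ into all of its graded pieces, plus a ``Hecke-disjointness'' and rank count --- is announced rather than carried out, and it is exactly the technical content that would have to be supplied. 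As written, the proposal establishes only that elements of the kernel of the product map are nilpotent; the injectivity assertions, which are the substance of the proposition, remain unproved. A repair closer to the paper's mechanism would be to express each $J_0(N)^{(i-1)}_\text{p-old}$ as a sum of $\BT_0(N)$-stable subvarieties on which the relevant quotient algebras act faithfully (as the paper does at the bottom stage), rather than to invoke a global splitting into graded quotients.
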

\begin{proof}
We first note that if $i\geq1$, then there are surjective $\BZ$-algebra homomorphisms $\BT_0(N)\twoheadrightarrow\BT_0(N)^\text{p-old}_{(i)}$ and $\BT_0(N)\twoheadrightarrow\BT_0(N)^\text{p-new}_{(i)}$, which clearly factors as
\begin{align*}
\begin{cases}
\BT_0(N)^\text{p-old}_{(i-1)}\twoheadrightarrow\BT_0(N)^\text{p-old}_{(i)}\\
\BT_0(N)^\text{p-old}_{(i-1)}\twoheadrightarrow\BT_0(N)^\text{p-new}_{(i)}
\end{cases}
\end{align*}
and combine to give an injection
\[\BT_0(N)^\text{p-old}_{(i-1)}\hookrightarrow\BT_0(N)^\text{p-old}_{(i)}\times\BT_0(N)^\text{p-new}_{(i)}.\]
It follows that we obtain an injection
\[\BT_0(N)^\text{p-old}\hookrightarrow\BT_0(N)^\text{p-old}_{(n_p-1)}\times(\prod^{n_p-1}_{i=1}\BT_0(N)^\text{p-new}_{(i)}).\]
Since $J_0(N)^{(n_p)}_\text{p-old}$ is clearly contained in $J_0(N)^{(n_p-1)}_\text{p-old}$, the action of $\BT_0(N)$ factors to give the following $\BZ$-algebra homomorphism
\begin{align*}
  \BT_0(N)^\text{p-old}_{(n_p-1)}\twoheadrightarrow\BT_0(N)^\text{p-old}_{(n_p)}.
\end{align*}
On the other hand, $J_0(N)^{(n_p-1)}_\text{p-old}$ contains the subvariety $\pi^{N*}_{N/p,p}(J_0(N/p))$, which is isogenous to $J_0(N/p)$ as $\ker(\pi^{N*}_{N/p,p})\subseteq\ker([p])$ is finite. Thus it follows from Corollary~\ref{cor2} that there is a surjective $\BZ$-algebra homomorphism
\begin{align*}
  \BT_0(N)^\text{p-old}_{(n_p-1)}\twoheadrightarrow\BT_0(N/p),
\end{align*}
which maps $T^{\Gamma_0(N)}_\ell$ to $T^{\Gamma_0(N/p)}_\ell$ for any prime $\ell$. As $J_0(N)^{(n_p-1)}_\text{p-old}$ is generated by its subvarieties $J_0(N)^{(n_p)}_\text{p-old}$ and $\pi^{N*}_{N/p,p}(J_0(N/p))$, the above two surjections combined to give the following injection
\begin{align*}
  \BT_0(N)^\text{p-old}_{(n_p-1)}\hookrightarrow\BT_0(N)^\text{p-old}_{(n_p)}\times\BT_0(N/p).
\end{align*}
It remains to prove (1). Since $T^{\Gamma_0(N)}_p=\pi^{N*}_{N/p,p}\circ\pi^{N}_{N/p,1*}$ by Lemma~\ref{lemma3}, we find that $T^{\Gamma_0(N)}_p(J_0(N))\subseteq\pi^{N*}_{N/p,p}(J_0(N/p))$, and is therefore contained in $J_0(N)^{(i)}_\text{p-old}$ for any $i$. So the first assertion follows. In particular, we find that $\BT_0(N)^\text{p-new}_{(i)}$ is generated by (the image of) $T^{\Gamma_0(N)}_\ell$ for those prime $\ell\neq p$. However, it follows from the construction that $\pi^{N*}_{N/p^i,1}$ induces a surjective homomorphism
\[J_0(N/p^i)^\text{p-new}\twoheadrightarrow J_0(N)^\text{p-new}_{(i)}.\]
As $\pi^{N}_{N/p^i,1}$ commutes with the $\ell$-th Hecke operators for any prime $\ell\neq p$, we find that there is an induced surjective $\BZ$-algebra homomorphism $\BT_0(N/p^i)^\text{p-new}\twoheadrightarrow\BT_0(N)^\text{p-new}_{(i)}$ which completes the proof.
\end{proof}

\section{Definition and basic properties of maximal Eisenstein ideals}
Let $N$ be a positive integer. If $\fm\subseteq\BT_0(N)$ is a maximal ideal, then there exists a unique semi-simple representation
\[\rho_\fm:~\Gal(\bar{\BQ}/\BQ)\rightarrow\GL_2(\BT_0(N)/\fm),\]
which is unramified outside $N\fm$, such that
\begin{align}\label{4.1}
  \begin{cases}
    \tr(\rho_\fm(\Frob_\ell))=T^{\Gamma_0(N)}_\ell &\pmod\fm\\
    \det(\rho_\fm(\Frob_\ell))=\ell &\pmod\fm
  \end{cases}
\end{align}
for any prime $\ell$ with $(\ell,N\fm)=1$. See \cite{Ri2}.

Let $\fP$ be a minimal prime of $\BT_0(N)$ with $\fP\subseteq\fm$. Then $\CO:=\BT_0(N)/\fP$ is an order in the number field $K:=\CO\otimes_{\BZ}\BQ$. Let $\CO_K$ be the ring of integers in $K$. Note that $\fm$ corresponds to a prime in $\CO$ which will also be denoted as $\fm$. Choose a prime $\lambda$ of $\CO_K$ lying over $\fm$. Let $\kappa(\lambda):=\CO_K/\lambda$ and $\kappa(\fm):=\CO/\fm$. Then we have the following commutative diagram
\[\xymatrix{
                & \CO_K  \ar[r] & \kappa(\lambda)   \\
  \BT_0(N)  \ar[r] &\CO  \ar[u]\ar[r] & \ar[u]\kappa(\fm).            }
\]
Let $f=\sum_{n\geq1}a_n(f)\fq^n\in S^B_2(\Gamma_0(N),\BC)$ be the normalized eigenform corresponding to the ring homomorphism $\BT_0(N)\twoheadrightarrow\CO$. It follows that there is a $\lambda$-adic representation $\rho_{f,\lambda}:\Gal(\bar{\BQ}/\BQ)\rightarrow\GL_2(\CO_{K,\lambda})$ which is unramified outside $N\lambda$, such that for any prime $\ell$ with $(\ell,N\lambda)=1$
\begin{align*}
  \begin{cases}
    \tr(\rho_{f,\lambda}(\Frob_\ell))=a_\ell(f)\\
    \det(\rho_{f,\lambda}(\Frob_\ell))=\ell .
  \end{cases}
\end{align*}
Let $\bar{\rho}^{ss}_{f,\lambda}:\Gal(\bar{\BQ}/\BQ)\rightarrow\GL_2(\kappa(\lambda))$ be the semi-simplification of the reduction $\bar{\rho}_{f\lambda}$ of $\rho_{f,\lambda}$. Then we find by the density theorem and the Brauer-Nesbitt theorem that
\begin{align}\label{4.2}
  \rho_{\fm}\otimes_{\kappa(\fm)}\kappa(\lambda)=\bar{\rho}^{ss}_{f,\lambda}.
\end{align}

\begin{defn}\label{def-Eisenstein}
  Let $N\in\BZ_{\geq1}$ be a positive integer. If $\fm\subseteq\BT_0(N)$ is a maximal ideal such that $\rho_\fm$ is reducible, then we call $\fm$ to be \textbf{a maximal Eisenstein ideal}.
\end{defn}

We fix some notations before going on. Let $\fm\subseteq\BT_0(N)$ be a maximal Eisenstein ideal as above. Denote by $q:=char(\kappa(\fm))$ be the residue characteristic of $\fm$. We always \textbf{assume} that $q\neq2$. Since $\rho_\fm$ is semi-simple and reducible, we have
\[\rho_\fm=\bar{\chi}_1\oplus\bar{\chi}_2,\]
where $\bar{\chi}_i:\Gal(\bar{\BQ}/\BQ)\rightarrow\kappa(\fm)^\times$ is a character for $i=1,2$. Denote by $\epsilon_q$ to be the modulo-$q$ cyclotomic character. Then we find by Eq.~\ref{4.1} that
\[\bar{\chi}_1\cdot\bar{\chi}_2=\epsilon_q.\]
For $i=1,2$ we can write $\bar{\chi}_i=\bar{\eta}_i\cdot\epsilon^{k_i}_q$, where $\bar{\eta_i}$ is unramified at $q$ and $k_i\in\{0,...,q-2\}$. In particular, we find that $K_1+k_2\equiv1\pmod{q-1}$. It follows that $k_1\neq k_2$ as we have assumed $q$ to be odd. We define
\begin{align}\label{4.3}
\bar{\chi}^{-1}:=
  \begin{cases}
    \bar{\chi}_1,& \text{if }k_1<k_2\\
    \bar{\chi}_2,& \text{if }k_2<k_1,
  \end{cases}
\end{align}
which will be called as \text{the character associated with $\fm$}. Thus we have $\rho_\fm=\bar{\chi}^{-1}\oplus\bar{\chi}\cdot\epsilon_q$. Denote by $\ff_{\bar{\chi}}$ to be the conductor of $\bar{\chi}$. Note that by Lemma 4.12 of \cite{DDT}, $\bar{\chi}$ is unramified at $q$ if $(q,N)=1$.

\begin{prop}\label{bound for conductor}
Let $N\in\BZ_{\geq1}$ and $\fm$ be a maximal Eisenstein ideal with residue characteristic $q\neq2$. Let $\bar{\chi}$ be the character associated to $\fm$ and $\ff_{\bar{\chi}}$ be the conductor of $\bar{\chi}$. Then we have $\ff^2_{\bar{\chi}}\mid N$.
\end{prop}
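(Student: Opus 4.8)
The plan is to prove $\ff_{\bar\chi}^2 \mid N$ by induction on the number of prime factors of $N$, using the factorizations of the Hecke algebra developed in \S3 to reduce to lower level whenever $\fm$ comes from an oldform. First I would reduce to the case that $\fm$ is \emph{new}, in the sense that $\fm$ does not arise from $\BT_0(N/p)$ for any prime $p\mid N$. Indeed, if $\fm$ arises (via $\BT_0(N)^{\text{p-old}}\twoheadrightarrow\BT_0(N/p)$ of Proposition~\ref{decomposition-prop}(2), or via Eq.~\ref{2.5} when $p\neq 2$) from a maximal Eisenstein ideal $\fm'$ of $\BT_0(N/p)$, then $\rho_\fm$ and $\rho_{\fm'}$ have the same associated character $\bar\chi$ (the characters are determined by the traces of Frobenius away from $N\fm$, which agree), so by induction $\ff_{\bar\chi}^2 \mid N/p \mid N$. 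Thus it suffices to handle $\fm$ that is new at every prime dividing $N$.

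Next I would analyze the ramification of $\bar\chi$ prime by prime. For a prime $p \mid N$ with $p \neq q$, the key point is that $\rho_\fm = \bar\chi^{-1} \oplus \bar\chi\epsilon_q$ has conductor dividing $N$ (more precisely, the prime-to-$q$ part of the Artin conductor of $\rho_\fm$ divides $N$, since $\rho_\fm$ is unramified outside $N\fm$ and arises by reduction from $\rho_{f,\lambda}$ whose conductor divides $N\lambda$). The conductor of $\rho_\fm$ at $p$ is $v_p(\ff_{\bar\chi^{-1}}) + v_p(\ff_{\bar\chi\epsilon_q})$. Since $\epsilon_q$ is unramified at $p$ (as $p\neq q$), we get $v_p(\ff_{\bar\chi\epsilon_q}) = v_p(\ff_{\bar\chi}) = v_p(\ff_{\bar\chi^{-1}})$, so $v_p(\Cond(\rho_\fm)) = 2 v_p(\ff_{\bar\chi})$, giving $2 v_p(\ff_{\bar\chi}) \le v_p(N)$ at all such $p$. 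For the prime $p = q$ (if $q \mid N$), one uses that $\bar\chi = \bar\eta\cdot\epsilon_q^k$ with $\bar\eta$ unramified at $q$ by the normalization~\eqref{4.3} and Lemma 4.12 of~\cite{DDT}; then $v_q(\ff_{\bar\chi})$ is $0$ unless the tame part of $\bar\eta$ at $q$ is nontrivial, and in all cases a local analysis of $\rho_\fm$ at $q$ (the two characters $\bar\chi^{-1}$ and $\bar\chi\epsilon_q$ differ by $\epsilon_q$, which is ramified) forces $v_q(\ff_{\bar\chi}) \le [\tfrac12 v_q(N)]$ once $\fm$ is $q$-new; this last point is where I would invoke the structure of $\rho_\fm|_{D_q}$ together with the bound coming from the $q$-newness via Corollary~\ref{cor1} and Corollary~\ref{cor2}.

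Combining the local bounds $2 v_p(\ff_{\bar\chi}) \le v_p(N)$ over all $p$ yields $\ff_{\bar\chi}^2 \mid N$. I expect the main obstacle to be the prime $p = q$ when $q \mid N$: away from $q$ the argument is the clean conductor bookkeeping sketched above, but at $q$ the cyclotomic character is ramified and the local representation $\rho_\fm|_{D_q}$ need not be a direct sum of tamely ramified characters, so controlling $v_q(\ff_{\bar\chi})$ requires the input that $\fm$ is new at $q$ — precisely the reduction step, together with the explicit action of $T_q$ on the $q$-new quotient ($T_q = -w_q$ if $q\|N$ by Corollary~\ref{cor1}, $T_q = 0$ if $q^2\mid N$ by Corollary~\ref{cor2}), which pins down the behavior of $\rho_\fm$ at $q$ and hence the allowed ramification of $\bar\chi$. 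A secondary technical point is making sure the induction base case ($N$ a prime power, or $N=1$) is handled, but there $\ff_{\bar\chi}$ is forced to be $1$ by the same conductor count, so $\ff_{\bar\chi}^2 = 1 \mid N$ trivially.
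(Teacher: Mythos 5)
Your proposal is correct and follows essentially the same route as the paper: the core of both arguments is the local bound $2\,v_p(\ff_{\bar{\chi}})\le v_p(N)$ at primes $p\neq q$, obtained from the fact that the conductor of $\bar{\rho}_{f,\lambda}$ divides $N$ away from $q$, together with a separate case analysis at $p=q$. The only organizational differences are these. At $p\neq q$ you quote the general inequality $\Cond(\rho_\fm)=\Cond(\bar{\rho}^{ss}_{f,\lambda})\mid\Cond(\bar{\rho}_{f,\lambda})$ and compute the left side as a sum of conductors of characters, whereas the paper proves the same thing by hand, checking $2\codim(\bar{\chi}^{I_u})\le\codim(\bar{\rho}^{I_u}_{f,\lambda})$ case by case along the upper-numbering ramification filtration; these are the same estimate. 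Your upfront induction to the everywhere-new case is harmless but not needed at $p\neq q$ (the conductor bound is insensitive to whether $\fm$ is old or new there); the paper performs such a reduction only at $p=q$ with $q\parallel N$ and $\fm$ $q$-old. The one spot your sketch leaves genuinely open is the case you flag, $q\parallel N$ with $\fm$ $q$-new, and the way it is closed is: $T^{\Gamma_0(N)}_q\equiv\pm1\pmod{\fm}$ is a unit, so one may take $f$ to be $q$-new and hence ordinary at $q$, so $\bar{\rho}_{f,\lambda}|_{G_q}$ has an unramified quotient; comparing with $\bar{\chi}^{-1}\oplus\bar{\chi}\cdot\epsilon_q$ on $I_q$ and using the normalization defining $\bar{\chi}$ (which assigns the smaller cyclotomic exponent to $\bar{\chi}^{-1}$) forces the unramified constituent to be $\bar{\chi}^{-1}$, whence $v_q(\ff_{\bar{\chi}})=0$. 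Note also that for $q^2\mid N$ no newness or local analysis is required at all: a character valued in a field of characteristic $q$ is tamely ramified at $q$, so $v_q(\ff_{\bar{\chi}})\le1$ and $2\,v_q(\ff_{\bar{\chi}})\le v_q(N)$ holds automatically, which is exactly how the paper disposes of that case.
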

\begin{proof}
Choose $f,\lambda$ as in Eq.~\ref{4.2}, and denote also by $\bar{\chi}$ to be its base change from $\kappa(\fm)$ to $\kappa(\lambda)$. For any prime $p\neq q$, we have
\begin{align*}
\begin{cases}
  v_p(\ff_\chi)=\int^{+\infty}_{-1}\codim(\bar{\chi}^{I_u})du\\
  v_p(\bar{\rho}_{f,\lambda})=\int^{+\infty}_{-1}\codim(\bar{\rho}^{I_u}_{f,\lambda})du\leq v_p(N),
\end{cases}
\end{align*}
where the last inequality follows from Lemma 2.7 and Theorem 3.1,(d) of \cite{DDT}. By symmetry, we may assume that there is an exact sequence $0\rightarrow\bar{\chi}^{-1}\rightarrow\bar{\rho}_{f,\lambda}\rightarrow\bar{\chi}\rightarrow0$ of $I_p$-modules. Then:
\begin{enumerate}
  \item If $\codim(\bar{\rho}^{I_u}_{f,\lambda})=0$, then $\bar{\chi}|_{I_u}$ is trivial so that $2\codim(\bar{\chi}^{I_u})\leq\codim(\bar{\rho}^{I_u}_{f,\lambda})$;
  \item If $\codim(\bar{\rho}^{I_u}_{f,\lambda})=2$, then $2\codim(\bar{\chi}^{I_u})\leq\codim(\bar{\rho}^{I_u}_{f,\lambda})$ because $\codim(\bar{\chi}^{I_u})\leq1$;
  \item Suppose that $\codim(\bar{\rho}^{I_u}_{f,\lambda})=1$. If $\codim(\bar{\chi}^{I_u})\leq1$, or equivalently, $\bar{\chi}|_{I_u}$ is not trivial, then $\bar{\chi}^{-1}\bigcap\bar{\rho}^{I_u}_{f,\lambda}=0$, so the exact sequence induces $\bar{\rho}^{I_u}_{f,\lambda}\cong\bar{\chi}$. But then we find that $\bar{\chi}|_{I_u}=\id$ which is a contradiction. Therefore $\codim(\bar{\chi}^{I_u})=0$ and $2\codim(\bar{\chi}^{I_u})\leq\codim(\bar{\rho}^{I_u}_{f,\lambda})$ also holds.
\end{enumerate}
It follows that $2\cdot v_p(\ff_{\bar{\chi}})\leq v_p(N)$ for any prime $p\neq q$. Thus to complete the proof we only need to show that $2\cdot v_q(\ff_{\bar{\chi}})\leq v_q(n)$. If $q\nmid N$, this follows from Lemma 4.2 of \cite{DDT} which says that $\bar{\chi}$ is unramified.
On the other hand, since $v_q(\ff_{\bar{\chi}})\leq1$, the inequality $v_q(\ff_{\bar{\chi}})\leq v_q(N)$ automatically holds if $q^2\mid N$. So it remains to consider the situation when $q\|N$.
\begin{itemize}
  \item If $q\|N$ and $m$ is $q$-new, then we may assume the modular form $f$ in Eq.~\ref{4.2} to be $q$-new. Then it follows from Theorem 3.1 of \cite{DDT} that $\bar{\rho}_{f,\lambda}$ is ordinary and hence has an unramified quotient. Since $\bar{\rho}^{ss}_{f,\lambda}|_{G_q}=\rho_{\fm}|_{G_q}=\bar{\chi}^{-1}|_{G_q}\oplus\bar{\chi}\cdot\epsilon_q|_{G_q}$, we find that $\bar{\chi}$ is unramified at $q$, whence the inequality;
  \item If $q\|N$ and $m$ is $q$-old, then we have $\BT_0(N)/\fm\simeq\BT_0(N)^{\text{q-old}}/\fm$. Denote by $\fn$ to be the inverse image of $\fm$ in $\BT_0(N/q)$ via $\BT_0(N/q)\rightarrow\BT_0(N)^{\text{q-old}}\simeq\BT_0(N/q)[x]/(x^2-T^{\Gamma_0(N/q)}_q\cdot x-q)$. Then we find by the density theorem and the Brauer-Nesbitt Theorem that $\rho_\fn\simeq\rho_\fm=\bar{\chi}^{-1}\oplus\bar{\chi}\cdot\epsilon_q$. So we also find that $\bar{\chi}$ is unramified at $q$ because $q\nmid(N/q)$, which completes the proof.
\end{itemize}
\end{proof}

The above proposition gives us an upper bound for the conductor $\ff_{\bar{\chi}}$. In particular, we find that $\bar{\chi}$ is unramified outside $qN$. Moreover, it follows from Eq.~(\ref{4.1}) that
\begin{align}
  T^{\Gamma_0(N)}_\ell\equiv\bar{\chi}(\ell)^{-1}+\ell\cdot\bar{\chi}(\ell)\pmod{\fm}
\end{align}
for any prime $\ell\nmid qN$.

\begin{lem}\label{4.4}
Let $N\in\BZ_{\geq1}$ and $p$ be a prime with $p\mid N$ but $p\nmid 2q$. Then:
\begin{enumerate}
  \item $T^{\Gamma_0(N)}_p\equiv\bar{\chi}(p)^{-1}, \ p\cdot\bar{\chi}(p)\pmod{\fm}$ if $p\|N$;
  \item $T^{\Gamma_0(N)}_p\equiv0,\ \bar{\chi}(p)^{-1} \text{ or } p\cdot\bar{\chi}(p)\pmod{\fm}$ if $p^2\mid N$. Moreover we have $T^{\Gamma_0(N)}_p\equiv0\pmod{\fm}$ if $p\mid\ff_{\bar{\chi}}$.
\end{enumerate}
\end{lem}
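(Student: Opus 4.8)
The plan is to pin down $T^{\Gamma_0(N)}_p\bmod\fm$ by pushing it down through the level‑lowering factorizations of the Hecke algebra built in \S3: at each stage either $p$ disappears from the level (and $T_p$ is forced to be $0$) or $p$ becomes prime to the remaining level, at which point one feeds in the Eisenstein congruence $T^{\Gamma_0(M)}_\ell\equiv\bar\chi(\ell)^{-1}+\ell\bar\chi(\ell)\pmod{\fm}$, valid for $\ell\nmid qM$, which is just \eqref{4.1}.

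For part (1), assume $p\|N$ and start from the injection $\BT_0(N)\hookrightarrow\BT_0(N)^{\text{p-old}}\times\BT_0(N)^{\text{p-new}}$ of Corollary~\ref{cor1}, so that $\fm$ is pulled back from a maximal ideal of one of the two factors. On the $p$-old side, \eqref{2.5} (available since $p\neq2$) presents $\BT_0(N)^{\text{p-old}}$ as $\BT_0(N/p)[x]/(x^2-T^{\Gamma_0(N/p)}_p x+p)$ with $x$ the image of $T^{\Gamma_0(N)}_p$; I would let $\fn$ be the induced maximal ideal of $\BT_0(N/p)$, observe that $\rho_\fn\simeq\rho_\fm$ by Brauer–Nesbitt and Chebotarev (exactly as in the last bullet of the proof of Proposition~\ref{bound for conductor}), so $\fn$ is again Eisenstein with the same character $\bar\chi$, and then use $p\nmid q(N/p)$ to get $T^{\Gamma_0(N/p)}_p\equiv\bar\chi(p)^{-1}+p\bar\chi(p)$; the quadratic then factors as $(x-\bar\chi(p)^{-1})(x-p\bar\chi(p))$ modulo $\fm$, giving the claim. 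On the $p$-new side I would realize $\fm$ by a $p$-new eigenform (possible because, when $p\|N$, a $p$-new minimal prime of $\BT_0(N)$ lies under $\fm$); such an eigenform is an unramified twist of a Steinberg newform $g$ at a level $M$ with $p\|M$, so $a_p(g)=\pm1$, and by \eqref{4.2} the local representation $\rho_\fm|_{G_p}\otimes\kappa(\lambda)=\bar\rho^{\mathrm{ss}}_{g,\lambda}|_{G_p}$ — unramified at $p$ since $p\nmid\ff_{\bar\chi}$ by Proposition~\ref{bound for conductor} — has geometric Frobenius eigenvalue set $\{\bar\chi(p)^{-1},p\bar\chi(p)\}$ on one side and $\{\overline{a_p(g)},\,p\,\overline{a_p(g)}\}$ on the other; comparing forces $T^{\Gamma_0(N)}_p\equiv a_p(g)\in\{\bar\chi(p)^{-1},p\bar\chi(p)\}$.

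For part (2) I would induct on $v_p(N)\ge2$, proving both assertions simultaneously. By Corollary~\ref{cor2} and Proposition~\ref{decomposition-prop}, $\fm$ is pulled back from a maximal ideal of $\BT_0(N)^{\text{p-new}}$, of some $\BT_0(N)^{\text{p-new}}_{(i)}$ with $1\le i\le n_p-1$, of $\BT_0(N)^{\text{p-old}}_{(n_p)}$, or of $\BT_0(N/p)$. In the first three cases $T^{\Gamma_0(N)}_p$ maps to $0$, resp. (by Proposition~\ref{decomposition-prop}(3)) to a nilpotent element, hence $T^{\Gamma_0(N)}_p\equiv0\pmod\fm$. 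In the $\BT_0(N/p)$ case, Proposition~\ref{decomposition-prop}(2) identifies the Hecke operators, the induced $\fn\subseteq\BT_0(N/p)$ is again Eisenstein with the same $\bar\chi$, and part (1) (if $v_p(N/p)=1$) or the inductive hypothesis (if $v_p(N/p)\ge2$) gives $T^{\Gamma_0(N/p)}_p\in\{0,\bar\chi(p)^{-1},p\bar\chi(p)\}\bmod\fn$, whence the same for $T^{\Gamma_0(N)}_p$. For the final clause, if $p\mid\ff_{\bar\chi}$ then Proposition~\ref{bound for conductor} (applied to $\fn$ at any level $M$ with $v_p(M)=1$) shows no Eisenstein ideal with character $\bar\chi$ can live at such an $M$; so the $\BT_0(N/p)$‑branch can only be iterated while $v_p$ stays $\ge2$, and hence $\fm$ must eventually exit through a $p$‑new quotient or through $\BT_0(\cdot)^{\text{p-old}}_{(n_p)}$, each of which yields $T^{\Gamma_0(N)}_p\equiv0\pmod\fm$.

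I expect the $p$-new subcase of part (1) to be the real obstacle: the Hecke‑algebraic input there gives only $T_p^2=1$, so to locate $T_p\bmod\fm$ inside the two‑element set $\{\bar\chi(p)^{-1},p\bar\chi(p)\}$ one genuinely needs the local structure at $p$ of the Galois representation attached to the associated Steinberg newform, together with the (mild, but slightly fiddly) structural fact that a $p$‑new maximal ideal is carried by an honest $p$‑new eigenform. The remaining ingredients — the quadratic factorization, the iterated decomposition, and the bookkeeping behind the ``moreover'' clause — are routine once Proposition~\ref{bound for conductor} is in hand.
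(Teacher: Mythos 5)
Your proposal is correct and follows essentially the same route as the paper: the same $p$-old/$p$-new dichotomy with the quadratic relation \eqref{2.5} and Brauer--Nesbitt transfer to $\fn$ for (1), the local-at-$p$ description of the Galois representation of a $p$-new (Steinberg) eigenform for the $p$-new subcase, and the iterated decomposition of Proposition~\ref{decomposition-prop} together with Proposition~\ref{bound for conductor} for (2) and the ``moreover'' clause. The only cosmetic difference is that in the $p$-new subcase of (1) the paper pins down $T^{\Gamma_0(N)}_p\equiv\bar{\chi}(p)^{-1}$ by identifying the unramified character $\eta$ with $\bar{\chi}^{-1}|_{G_p}$, whereas you conclude only membership in the two-element set, which is all the statement requires.
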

\begin{proof}
We first prove (1), and we will distinguish into the following situations:

(1.a) If $\fm$ is $p$-new so that we have $\BT_0(N)/\fm\simeq\BT_0(N)^\text{p-new}/\fm$, then we find that $T^{\Gamma_0(N)}_p\equiv\pm1\pmod{\fm}$, and we can choose the modular form $f$ in Eq.~\ref{4.2} to be $p$-new. Thus it follows from Theorem 3.1,(e) if \cite{DDT} that
\begin{align*}
  \rho_{f,p}|_{G_p}\sim\left(
                         \begin{array}{cc}
                           \eta\cdot\epsilon_q & \star \\
                           0 & \eta \\
                         \end{array}
                       \right),
\end{align*}
where $\eta:G_p\rightarrow\{\pm1\}$ is the unique unramified quadratic character such that $\eta(p)=a_p(f)=\pm1$. It follows that $\eta=\bar{\chi}^{-1}|_{G_p}$, and that
\begin{align*}
  T^{\Gamma_0(N)}_p&\equiv\eta(p)\pmod{\fm}\\
  &=\bar{\chi}(p)^{-1}.
\end{align*}

(1.b) If $\fm$ is $p$-old, then we have $\BT_0(N)/\fm\simeq\BT_0(N)^\text{p-old}/\fm$. Recall that $\BT_0(N)^\text{p-old}\simeq\BT_0(N/p)[x]/(x^2-T^{\Gamma_0(N/p)}\cdot x+p)$ as $p$ is odd. Let $\fn$ be the inverse image of $\fm$ under $\BT_0(N/p)\rightarrow\BT_0(N)^\text{p-old}$. Then
\begin{align*}
T^{\Gamma_0(N/p)}_\ell\pmod{\fn}&=T^{\Gamma_0(N)}_\ell\pmod{\fm}\\
&=\bar{\chi}(\ell)^{-1}+\ell\cdot\bar{\chi}(\ell)
\end{align*}
for any prime $\ell\nmid qN$, so it follows from the density theorem and the Brauer-Nesbitt theorem that $\rho_\fn\simeq\rho_\fm=\bar{\chi}^{-1}\oplus\bar{\chi}\cdot\epsilon_q$. In particular, we find that $T^{\Gamma_0(N/p)}_p\equiv\bar{\chi}(p)^{-1}+p\cdot\bar{\chi}(p)\pmod\fn$. It follows that $T^{\Gamma_0(N)}_p\pmod{\fm}$ satisfies the equation $(x-\bar{\chi}(p)^{-1})\cdot(x-p\cdot\bar{\chi}(p))=0$, so that $T^{\Gamma_0(N)}_p\pmod{\fm}=\bar{\chi}(p)^{-1}$ or $p\cdot\bar{\chi}(p)$.

Now we turn to the proof of (2). We will also distinguish into several situations:

(2.a) If $\fm$ is $p$-new, then $\BT_0(N)/\fm\simeq\BT_0(N)^\text{p-new}/\fm$ so that $T^{\Gamma_0(N)}_p\equiv0\pmod{\fm}$.

(2.b) If $\fm$ is $p$-old, then $\BT_0(N)/\fm\simeq\BT_0(N)^\text{p-old}/\fm$. Recall that there is an injection of $\BZ$-algebras
\[\BT_0(N)^\text{p-old}\hookrightarrow(\prod^{n_p-1}_{i=1}\BT_0(N)^\text{p-new}_{(i)})\times\BT_0(N)^\text{p-old}_{(n_p)}\times\BT_0(N/p),\]
where $n_p=v_p(N)\geq2$. Suppose the image of $\fm$ in $\BT_0(N)^\text{p-new}_{(i)}$ is still maximal for some $i=1,2,...,n_p-1$, then $T^{\Gamma_0(N)}_p\equiv0\pmod{\fm}$ because $T^{\Gamma_0(N)}_p\mapsto0$ via $\BT_0(N)^\text{p-old}\rightarrow\BT_0(N)^\text{p-new}_{(i)}$. Similarly, since the image of $T^{\Gamma_0(N)}_p$ in $\BT_0(N)^\text{p-old}_{(n_p)}$ is nilpotent, we find that $T^{\Gamma_0(N)}_p\equiv0\pmod{\fm}$ if $\fm$ stays maximal in $\BT_0(N)^\text{p-old}_{(n_p)}$. Finally, if the image of $\fm$ in $\BT_0(N/p)$ is still maximal, then we have
\[\BT_0(N)/\fm\simeq\BT_0(N)^\text{p-old}/\fm\simeq\BT_0(N/p)/\fm,\]
and the assertion follows inductively from the above results.

(2.c) We still need to show that $T^{\Gamma_0(N)}_p\equiv0\pmod{\fm}$ if $p\mid\ff_{\bar{\chi}}$. By the above inductive process, we may assume that there is an isomorphism $\BT_0(N)/\fm\simeq\BT_0(N^{(p)}\ff^{v_p(N)}_{\bar{\chi}})/\fm$ with $N^{(p)}$ being the prime-to-$p$ part of $N$. Note that by Proposition~\ref{bound for conductor} the map $\BT_0(N)\rightarrow\BT_0(N^{(p)}\ff^{v_p(N)}_{\bar{\chi}})/\fm$ can not factor through $\BT_0(N^{(p)}\ff^{v_p(N)-1}_{\bar{\chi}})$, so we find that $T^{\Gamma_0(N)}_p$ must be congruent to zero modulo $\fm$, which completes the proof of the lemma.
\end{proof}

\section{Eisenstein series associated to a maximal Eisenstein ideal}
\textbf{4.1. }We need need some preliminaries for the construction of the Eisenstein series associated to a maximal Eisenstein ideal. Let $C^{\infty}(\CH,\BC)=\{f:\CH\rightarrow\BC|f\text{ is smooth}\}$ be the space of all $\BC$-valued smooth functions on the $\CH$. Then, for any prime $p$, we let
\[\gamma_p:C^{\infty}(\CH,\BC)\rightarrow C^{\infty}(\CH,\BC),\ g\mapsto g|\left(
                                                                             \begin{array}{cc}
                                                                               p & 0 \\
                                                                               0 & 1 \\
                                                                             \end{array}
                                                                           \right).
\]
If $\chi$ is a Dirichlet character of conductor $\ff_\chi$ and $p$ is a prime with $(p,\ff_{\chi})=1$, then we define
\begin{align*}
\begin{cases}
[p]^+_\chi:=1-\chi(p)\cdot\gamma_p\\
[p]^-_\chi:=1-\frac{1}{p\cdot\chi(p)}\cdot\gamma_p.
\end{cases}
\end{align*}
It is clear that for any two primes $p_1,p_2$ which are prime to $\ff_\chi$, the operators $[p_1]^+_\chi,[p_1]^-_\chi,[p_2]^+_\chi$ and $[p_2]^-_\chi$ commutes with each other. For any integer $M=\prod^{k}_{i=1}p^{n_i}_i$ with $(M,\ff_\chi)=1$, we define
\[[M]^{\pm}_\chi:=[p_1]^{\pm}_\chi\circ...\circ[p_1]^{\pm}_\chi\circ...\circ[p_k]^{\pm}_\chi\circ...\circ[p_k]^{\pm}_\chi,\]
where the composition on the right hand side can in fact be in any order.

\begin{lem}\label{lemma3.3}
Let $\chi$ be a Dirichlet character of conductor $\ff_\chi$, $p\nmid\ff_\chi$ be a prime and $N$ be a positive integer. Then $[p]^{\pm}_\chi M_2(\Gamma_0(N),\BC)\subseteq M_2(\Gamma_0(Np),\BC)$, and we have

(1) $\CT^{\Gamma_0(Np)}_\ell\circ[p]^{\pm}_\chi=[p]^{\pm}_\chi\circ\CT^{\Gamma_0(N)}_\ell$ for any prime $\ell\neq p$;

(2) If $p\nmid N$, then $\CT^{\Gamma_0(Np)}_p\circ[p]^{+}_\chi=\CT^{\Gamma_0(N)}_p-\gamma_p-p\cdot\chi(p)$ and $\CT^{\Gamma_0(Np)}_p\circ[p]^{-}_\chi=\CT^{\Gamma_0(N)}_p-\gamma_p-\chi(p)^{-1}$;

(3) If $p\mid N$, then $\CT^{\Gamma_0(Np)}_p\circ[p]^{+}_\chi=\CT^{\Gamma_0(N)}_p-p\cdot\chi(p)$ and $\CT^{\Gamma_0(Np)}_p\circ[p]^{-}_\chi=\CT^{\Gamma_0(N)}_p-\chi(p)^{-1}$.
\end{lem}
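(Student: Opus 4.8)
The plan is to reduce the whole statement to an elementary computation with $\fq$-expansions at the cusp $\infty$. First, for the inclusion $[p]^{\pm}_\chi M_2(\Gamma_0(N),\BC)\subseteq M_2(\Gamma_0(Np),\BC)$: since $[p]^{\pm}_\chi$ is a polynomial in $\gamma_p$ with constant coefficients, it suffices to check that $\gamma_p$ carries $M_2(\Gamma_0(N),\BC)$ into $M_2(\Gamma_0(Np),\BC)$. This follows from the inclusion $\begin{pmatrix}p&0\\0&1\end{pmatrix}\Gamma_0(Np)\begin{pmatrix}p&0\\0&1\end{pmatrix}^{-1}\subseteq\Gamma_0(N)$, which gives $g|\begin{pmatrix}p&0\\0&1\end{pmatrix}|\gamma=g|\begin{pmatrix}p&0\\0&1\end{pmatrix}$ for every $\gamma\in\Gamma_0(Np)$ and $g\in M_2(\Gamma_0(N),\BC)$, together with the obvious preservation of holomorphy at the cusps. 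On $\fq$-expansions, $\gamma_p$ sends $\sum_{n\geq0}a_n\fq^n$ to $\sum_{n\geq0}pa_n\fq^{pn}$; writing $V_p$ for the operator $\sum a_n\fq^n\mapsto\sum a_n\fq^{pn}$ and $U_p$ for $\sum a_n\fq^n\mapsto\sum a_{pn}\fq^n$, we therefore have $\gamma_p=p\cdot V_p$ in weight two. (The hypothesis $p\nmid\ff_\chi$ enters only to guarantee $\chi(p)\in\BC^{\times}$, so that $[p]^{-}_\chi$ makes sense.)

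Next I recall the classical action on $\fq$-expansions: for a prime $r$ and a level $\Gamma_0(M)$, $\CT^{\Gamma_0(M)}_r$ acts by $a_m\mapsto a_{rm}+r\,a_{m/r}$ when $r\nmid M$ and by $a_m\mapsto a_{rm}$ (that is, as $U_r$) when $r\mid M$. Two consequences will be used repeatedly: $U_pV_p=\id$, and, for $p\nmid N$, the restriction of $\CT^{\Gamma_0(Np)}_p$ to $M_2(\Gamma_0(N),\BC)$ equals $\CT^{\Gamma_0(N)}_p-p\cdot V_p$ (both are immediate from the displayed formulas). For part (1): since $\ell\neq p$ we have $\ell\mid Np$ if and only if $\ell\mid N$, so $\CT^{\Gamma_0(Np)}_\ell$ and $\CT^{\Gamma_0(N)}_\ell$ are given by the same formula on $\fq$-expansions; comparing $a_m$-coefficients and using $a_m(\gamma_p h)=p\,a_{m/p}(h)$ gives $\CT^{\Gamma_0(Np)}_\ell\circ\gamma_p=\gamma_p\circ\CT^{\Gamma_0(N)}_\ell$ at once, hence the same identity for $[p]^{\pm}_\chi$.

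For parts (2) and (3) one applies $\CT^{\Gamma_0(Np)}_p=U_p$ to $[p]^{+}_\chi g=g-\chi(p)\,p\,V_pg$ and to $[p]^{-}_\chi g=g-\chi(p)^{-1}V_pg$. When $p\nmid N$, using $U_pV_p=\id$ and $\CT^{\Gamma_0(Np)}_p|_{M_2(\Gamma_0(N),\BC)}=\CT^{\Gamma_0(N)}_p-p\,V_p$ yields $U_p([p]^{+}_\chi g)=\CT^{\Gamma_0(N)}_pg-p\,V_pg-p\,\chi(p)g=\CT^{\Gamma_0(N)}_pg-\gamma_pg-p\,\chi(p)g$ and likewise $U_p([p]^{-}_\chi g)=\CT^{\Gamma_0(N)}_pg-\gamma_pg-\chi(p)^{-1}g$. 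When $p\mid N$, the operator $\CT^{\Gamma_0(N)}_p$ is already $U_p$ on $\fq$-expansions, so $\CT^{\Gamma_0(Np)}_p$ and $\CT^{\Gamma_0(N)}_p$ agree on $M_2(\Gamma_0(N),\BC)$, and the same computation (now with no $p\,V_p$ term) gives $U_p([p]^{+}_\chi g)=\CT^{\Gamma_0(N)}_pg-p\,\chi(p)g$ and $U_p([p]^{-}_\chi g)=\CT^{\Gamma_0(N)}_pg-\chi(p)^{-1}g$.

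There is no serious obstacle: the lemma is a bookkeeping exercise. The one point that genuinely needs attention is the normalization — the determinant factor in the weight-two slash action forces $\gamma_p=p\,V_p$ rather than $V_p$, which is precisely why the constants on the right-hand sides are $p\,\chi(p)$ and $\chi(p)^{-1}$ (and not $\chi(p)$ and $1/(p\,\chi(p))$) — and one must keep track, in each of (1)--(3), of whether the prime in question divides $N$ or only $Np$, since this determines which $\fq$-expansion formula for the Hecke operator applies.
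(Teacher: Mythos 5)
Your proof is correct and follows essentially the same route as the paper: the paper carries out the identical computation with explicit coset representatives $\sum_{k}\left(\begin{smallmatrix}1&k\\0&p\end{smallmatrix}\right)$ and $\left(\begin{smallmatrix}p&0\\0&1\end{smallmatrix}\right)$ under the weight-two slash action, which under the paper's normalization are exactly your $U_p$ and $p\,V_p$, so your Fourier-coefficient bookkeeping (including the key point $\gamma_p=p\,V_p$) is just the same argument read off on $\fq$-expansions. No gap.
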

\begin{proof}
Since $\gamma_p$ maps $M_2(\Gamma_0(N),\BC)$ to $M_2(\Gamma_0(Np),\BC)$ and $[p]^{\pm}_\chi$ is defined to be a linear combination of the identity map and $\gamma_p$, we find that $[p]^{\pm}_\chi$ also maps $M_2(\Gamma_0(N),\BC)$ to $M_2(\Gamma_0(Np),\BC)$. Moreover, if $\ell$ is a prime and $\ell\neq p$, then $\gamma_p$ commutes with $\CT_\ell=\sum^{\ell-1}_{k=0}\left(
                                                                                           \begin{array}{cc}
                                                                                             1 & k \\
                                                                                             0 & \ell \\
                                                                                           \end{array}
                                                                                         \right)+\left(
                                                                                                   \begin{array}{cc}
                                                                                                     \ell & 0 \\
                                                                                                     0 & 1 \\
                                                                                                   \end{array}
                                                                                                 \right)
$ (or $\sum^{\ell-1}_{k=0}\left(
         \begin{array}{cc}
           1 & k \\
           0 & \ell \\
         \end{array}
       \right)
$) if $\ell\nmid N$ (or respectively $\ell\mid N$) as operators on corresponding space of modular forms, so the first assertion follows.

If $p\nmid N$, then we have by definition that
\begin{align*}
  \CT^{\Gamma_0(Np)}_p\circ[p]^{+}_\chi(g)&=g|\left[1-\chi(p)\cdot\left(
                                                                    \begin{array}{cc}
                                                                      p & 0 \\
                                                                      0 & 1 \\
                                                                    \end{array}
                                                                  \right)
  \right]|\sum^{p-1}_{k=0}\left(
                               \begin{array}{cc}
                                 1 & k \\
                                 0 & p \\
                               \end{array}
                             \right)\\
  &=g|\sum^{p-1}_{k=0}\left(
                               \begin{array}{cc}
                                 1 & k \\
                                 0 & p \\
                               \end{array}
                             \right)-\chi(p)\cdot g|\sum^{p-1}_{k=0}\left(
                               \begin{array}{cc}
                                 p & pk \\
                                 0 & p \\
                               \end{array}
                             \right)\\
  &=\CT^{\Gamma_0(N)}_p(g)-f|\gamma_p-p\cdot\chi(p)\cdot g,
\end{align*}
for any $g\in M_2(\Gamma_0(N),\BC)$; similarly, we have by definition that
\begin{align*}
  \CT^{\Gamma_0(Np)}_p\circ[p]^{-}_\chi(g)&=g|\left[1-p^{-1}\cdot\chi^{-1}(p)\cdot\left(
                                                                    \begin{array}{cc}
                                                                      p & 0 \\
                                                                      0 & 1 \\
                                                                    \end{array}
                                                                  \right)
  \right]|\sum^{p-1}_{k=0}\left(
                               \begin{array}{cc}
                                 1 & k \\
                                 0 & p \\
                               \end{array}
                             \right)\\
  &=g|\sum^{p-1}_{k=0}\left(
                               \begin{array}{cc}
                                 1 & k \\
                                 0 & p \\
                               \end{array}
                             \right)-p^{-1}\cdot\chi^{-1}(p)\cdot g|\sum^{p-1}_{k=0}\left(
                               \begin{array}{cc}
                                 p & pk \\
                                 0 & p \\
                               \end{array}
                             \right)\\
  &=\CT^{\Gamma_0(N)}_p(g)-f|\gamma_p-\chi^{-1}(p)\cdot g
\end{align*}
so the second assertion follows. The proof of the third assertion is similar and we leave it to the reader.
\end{proof}

\textbf{4.2. }Hereafter we fix an \textbf{odd} positive integer $N$ and a maximal Eisenstein ideal $\fm\subset\BT_0(N)$ of residue characteristic $q\nmid6N$. Let $\bar{\chi}$ be the associated character which is of conductor $\ff_{\bar{\chi}}$. We denote by $\chi$ to be the Teichm$\ddot{u}$ller lifting of $\bar{\chi}$ which is of conductor $\ff_{\chi}=\ff_{\bar{\chi}}$. Since $\ff^2_{\bar{\chi}}\mid N$ by Lemma~\ref{bound for conductor}, we can decompose $N$ as
\begin{align}\label{eq4.5}
  N=N_{\bar{\chi}}\cdot(D\cdot C\cdot C_1\cdot\cdot\cdot C_r),
\end{align}
where $N_{\bar{\chi}}:=\prod_{p\mid\ff_{\bar{\chi}}}p^{v_p(N)}$ and $1\leq C_r|\cdot\cdot\cdot|C_1|C|D$ are all square-free positive integers. Let
\begin{align*}
\begin{cases}
  \CP_1(\fm):=\{p\mid D:T^{\Gamma_0(N)}_p\equiv0,\bar{\chi}(p)^{-1}\}\\
  \CP_2(\fm):=\{p\mid D:T^{\Gamma_0(N)}_p\equiv0,p\bar{\chi}(p)\neq\bar{\chi}(p)^{-1}\}\\
  \bar{M}:=\prod_{p\in\CP_1(\fm)}p,\ \bar{L}:=\prod_{p\in\CP_2(\fm)}p,
\end{cases}
\end{align*}
then we define
\begin{align}\label{eq4.6}
  M:=\ff_{\bar{\chi}}\cdot\bar{M},\ L:=\ff_{\bar{\chi}}\cdot\bar{L}.
\end{align}

\begin{lem}\label{4.5}
Let $N$ be an odd positive integer and $\fm\subset\BT_0(N)$ a maximal ideal with $(\fm,6N)=1$. Let notations be as in Eqs.~(\ref{eq4.5})-(\ref{eq4.6}). Then
\begin{enumerate}
  \item $M>1$;
  \item $T^{\Gamma_0(N)}_q\equiv\bar{\chi}(q)^{-1}\pmod{\fm}$.
\end{enumerate}
\end{lem}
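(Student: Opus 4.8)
I would prove the two parts separately; (1) is the substantive one and I would establish it by a descent along the factorizations of \S3, while (2) is a local computation at $q$. For (1): if $\ff_{\bar\chi}>1$ there is nothing to prove, since $M=\ff_{\bar\chi}\cdot\bar M\geq\ff_{\bar\chi}>1$. So assume $\ff_{\bar\chi}=1$, i.e. $\rho_\fm=\mathbf{1}\oplus\epsilon_q$ and $D=\prod_{p\mid N}p$, and show $\CP_1(\fm)\neq\emptyset$ by contradiction and induction on the number of prime factors of $N$ (the base case $N=1$ being vacuous since $\BT_0(1)=0$). Suppose $\CP_1(\fm)=\emptyset$. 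For each prime $p\mid N$ we have $p\nmid 2q$, so Lemma~\ref{4.4} gives $T^{\Gamma_0(N)}_p\equiv\bar\chi(p)^{-1}=1$ or $p\bar\chi(p)=p\pmod\fm$ if $p\|N$, and $T^{\Gamma_0(N)}_p\equiv 0,1$ or $p\pmod\fm$ if $p^2\mid N$. Since $p\notin\CP_1(\fm)$ rules out $0$ and $1$, we get $T^{\Gamma_0(N)}_p\equiv p\pmod\fm$, and moreover $p\not\equiv 1\pmod q$ (else $T_p\equiv1$). But the argument in the proof of Lemma~\ref{4.4} (cases (1.a) and (2.a)) shows that if $\fm$ were $p$-new then $T^{\Gamma_0(N)}_p\equiv\bar\chi(p)^{-1}=1$ (for $p\|N$) or $\equiv0$ (for $p^2\mid N$); since $T_p\equiv p\notin\{0,1\}$, $\fm$ is not $p$-new, hence $p$-old, for every $p\mid N$.

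Now descend one prime at a time. Fix $p\mid N$; being $p$-old, $\fm$ is the preimage of a maximal ideal $\fq\subseteq\BT_0(N)^{\text{p-old}}$, with $\BT_0(N)/\fm\cong\BT_0(N)^{\text{p-old}}/\fq$. Using $p$ odd together with the injections of Proposition~\ref{decomposition-prop} (for $p^2\mid N$) or the description $\BT_0(N)^{\text{p-old}}\cong\BT_0(N/p)[x]/(x^2-T^{\Gamma_0(N/p)}_px+p)$ of \S3.3 (for $p\|N$), and using that $T^{\Gamma_0(N)}_p\equiv p\not\equiv0\pmod\fm$ eliminates the factors $\BT_0(N)^{\text{p-new}}_{(i)}$ and $\BT_0(N)^{\text{p-old}}_{(n_p)}$ (on which the image of $T_p$ is $0$, respectively nilpotent), one sees that $\fq$ corresponds to a maximal ideal $\fn\subseteq\BT_0(N/p)$ under which $T^{\Gamma_0(N/p)}_\ell$ and $T^{\Gamma_0(N)}_\ell$ have the same image, for all primes $\ell\neq p$. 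The density theorem and the Brauer--Nesbitt theorem (as in the proof of Lemma~\ref{4.4}, case (1.b)) then give $\rho_\fn\cong\mathbf{1}\oplus\epsilon_q$, so $\fn$ is again a maximal Eisenstein ideal of residue characteristic $q\nmid 6(N/p)$ with trivial associated character, and $T^{\Gamma_0(N/p)}_{p'}\equiv p'\not\equiv0,1\pmod\fn$ for every $p'\mid N/p$, so $\CP_1(\fn)=\emptyset$. This contradicts the inductive hypothesis at level $N/p$, and hence $\CP_1(\fm)\neq\emptyset$ and $M>1$.

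For (2): since $q\nmid N$, $T^{\Gamma_0(N)}_q$ is a good Hecke operator and $T^{\Gamma_0(N)}_q\equiv a_q(f)\pmod\fm$ for the eigenform $f$ and prime $\lambda$ of Eq.~(\ref{4.2}), and $\bar\chi$ is unramified at $q$. I would first show that $f$ is ordinary at $q$: otherwise its underlying newform, of weight two and level prime to $q$, would have $\bar{\rho}_{f,\lambda}|_{G_q}$ irreducible (induced from a fundamental character of level two), contradicting $\bar{\rho}^{\mathrm{ss}}_{f,\lambda}|_{G_q}=\bar\chi^{-1}|_{G_q}\oplus\bar\chi\epsilon_q|_{G_q}$. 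Ordinarity then gives (cf. Theorem~3.1 of \cite{DDT}) that $\rho_{f,\lambda}|_{G_q}$ is reducible with unramified quotient character $\psi$, $\psi(\Frob_q)=\alpha$ the $\lambda$-unit root of $X^2-a_q(f)X+q$; since $\alpha\beta=q$ with $\beta$ a non-unit, $a_q(f)\equiv\alpha\pmod\lambda$. Reducing modulo $\lambda$ and comparing with $\bar{\rho}^{\mathrm{ss}}_{f,\lambda}|_{G_q}=\bar\chi^{-1}|_{G_q}\oplus\bar\chi\epsilon_q|_{G_q}$ — in which $\bar\chi^{-1}$ is unramified at $q$ while $\bar\chi\epsilon_q$ is not — forces the unramified quotient $\bar\psi$ to equal $\bar\chi^{-1}|_{G_q}$, so that $a_q(f)\equiv\bar\psi(\Frob_q)=\bar\chi(q)^{-1}\pmod\lambda$. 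As both sides already lie in $\kappa(\fm)\subseteq\kappa(\lambda)$, this yields $T^{\Gamma_0(N)}_q\equiv\bar\chi(q)^{-1}\pmod\fm$.

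The hardest point is the inductive step of (1): one must check that the descended ideal $\fn$ genuinely inherits every hypothesis — maximal, Eisenstein with the same trivial associated character, residue characteristic still $q$ — and, most delicately, that the congruences $T^{\Gamma_0(N)}_{p'}\equiv p'\pmod\fm$ survive contraction of maximal ideals along the maps of \S3, so that no $p'\mid N/p$ slips into $\CP_1(\fn)$. This rests on the sharper form of Lemma~\ref{4.4} visible in its proof, namely that $p$-newness pins $T_p$ to $\bar\chi(p)^{-1}$ (or $0$) rather than merely to $\{\bar\chi(p)^{-1},p\bar\chi(p)\}$. Part (2) is comparatively routine once one has the standard local description of ordinary versus supersingular weight-two modular Galois representations.
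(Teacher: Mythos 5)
Your part (2) is correct, and it takes a genuinely different route from the paper: the paper first uses part (1) to know that $E_{M,L,\chi}$ is a holomorphic Eisenstein series of level $N$, observes that $f-E_{M,L,\chi}$ reduces modulo $\lambda$ to a form whose $\fq$-expansion is supported on powers of $\fq^{q}$, and then invokes Katz's rigidity theorem to get $a_q(f)\equiv a_q(E_{M,L,\chi})=\chi(q)^{-1}+q\chi(q)\equiv\chi(q)^{-1}\pmod\lambda$. Your ordinary-versus-supersingular analysis at $q$ is a legitimate (and arguably cleaner) alternative that does not depend on part (1).

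Part (1), however, has a genuine gap, and you flagged it yourself: the ``sharper form of Lemma~\ref{4.4}'' you rely on --- that $p$-newness at a prime $p\|N$ pins $T^{\Gamma_0(N)}_p$ to $\bar{\chi}(p)^{-1}$ --- is not true in general. With $\bar{\chi}=1$, the local description at a $p$-new prime $p\|N$ gives $\rho_{f,\lambda}|_{G_p}\sim\left(\begin{smallmatrix}\eta\epsilon_q&\star\\0&\eta\end{smallmatrix}\right)$ with $\eta$ unramified quadratic and $\eta(p)=a_p(f)=\pm1$; matching $\{\eta,\eta\epsilon_q\}|_{G_p}$ against $\{1,\epsilon_q\}|_{G_p}$ forces $\eta=1$ only when $p\not\equiv-1\pmod q$. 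If $p\equiv-1\pmod q$ the alternative matching $\eta=\epsilon_q|_{G_p}$ is available, giving $T^{\Gamma_0(N)}_p\equiv-1\equiv p\pmod\fm$ with $\fm$ genuinely $p$-new --- exactly the level-raising configuration, which does occur. So you cannot conclude that $\fm$ is $p$-old at every prime dividing $N$, and your induction does not close. The paper's proof is structured precisely to avoid this: it descends only along the primes at which $\fm$ is $p$-old, arrives at a square-free level $d$ whose Eisenstein ideal is new at every $p\mid d$ with $T_p\equiv p\neq1$, and then rules out that terminal configuration by the global Theorem 2.6(ii) of \cite{BD}. That external input (or a substitute, e.g.\ Yoo's classification of new Eisenstein maximal ideals at square-free level) is indispensable and is absent from your argument. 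A minor further point: since the descent step $N\mapsto N/p$ need not decrease the number of distinct prime factors when $p^2\mid N$, your induction should run on $N$ itself or on the number of prime factors counted with multiplicity.
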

\begin{proof}
To prove (1) it is enough to show that $M>1$. Suppose to the contrary that $M=1$, so we have $\bar{\chi}=1$ and $T^{\Gamma_0(N)}_p\equiv p\neq1\pmod{\fm}$ for any prime $p\mid N$.

Let $p$ be a prime divisor of $N$ such that $n_p=v_p(N)\geq2$. Then because $T^{\Gamma_0(N)}_p$ is mapped to $0$ via $\BT_0(N)\rightarrow\BT_0(N)^\text{p-new}$, we find that the image of $\fm$ in $\BT_0(N)^\text{p-new}$ contains $T^{\Gamma_0(N)}_p-p=-p$ which implies that $\BT_0(N)^\text{p-new}_\fm=0$. Ir follows that $\fm$ must be $p$-old. Similarly, we find that $(\BT_0(N)^\text{p-new}_{(i)})_\fm=0$ for any $i=1,...,n_p-1$ and also that $(\BT_0(N)^\text{p-old}_{n_p})_\fm=0$. Therefore the image of $\fm$ in $\BT_0(N/p)$ must be a maximal ideal. Thus it follows inductively that there exists a maximal ideal $\bar{\fm}$ in the Hecke algebra $\BT_0(\bar{N})$, where $\bar{N}=S(N)$ is square-free, such that $T^{\Gamma_0(\bar{N})}\equiv1+\ell\pmod{\bar{\fm}}$ for any prime $\ell\nmid\bar{N}$, $T^{\Gamma_0(\bar{N})}\equiv p\pmod{\bar{\fm}}$ for any $p\mid\bar{N}$.

If $\bar{\fm}$ is $p$-old for some prime $p\mid\bar{N}$, then we have that
\begin{align*}
  \BT_0(\bar{N})/\bar{\fm}&\simeq\BT_0(\bar{N})^\text{p-old}/\bar{\fm}\\
  &\simeq\BT_0(\bar{N}/p)[x]/(x^2-T^{\Gamma_0(\bar{N}/p)}_p\cdot x+p,\ \bar{\fm}),
\end{align*}
which implies that there is a maximal Eisenstein ideal $\bar{\fm}_1\subseteq\BT_0(\bar{N}/p)$ such that $T^{\Gamma_0(N)}_{p'}\equiv p'\neq1\pmod{\bar{\fm}_1}$ for any prime $p'\mid(\bar{N}/p)$. Proceeding in this way, we will arrive at some divisor $d\mid\bar{N}$, such that there exists a maximal Eisenstein ideal $\bar{\fn}\subset\BT_0(d)$ which is $p$-new for any $p\mid d$ and satisfies $T^{\Gamma_0(d)}\equiv p\neq1\pmod{\fn}$ for any $p\mid d$. But this is impossible by Theorem 2.6,(ii) of \cite{BD}. We have thus proved (1).

To prove (2), we need to show that $a_q(f)\equiv\chi(q)^{-1}\pmod{\lambda}$ with $f,\lambda$ as in Eq.~\ref{4.2}. By (1), $E_{M,L,\chi}$ is an Eisenstein series in $\CE_2(\Gamma_0(N),\BC)$. Then it follows from Proposition~\ref{Hecke eigen} that $E_{M,L,\chi}-f$, when modulo $\lambda$, gives rise to a form $\sum^{\infty}_{n=0}a_{qn}\cdot \fq^{qn}\in M^B_2(\Gamma_0(N),\bar{\BF}_q)$. By the main result of \cite{K} we find that this form must be zero, so that $a_q(f)\equiv a_q(E_{M,L,\chi})\equiv\chi(q)^{-1}\pmod{\lambda}$ which completes the proof.
\end{proof}

\begin{defn}\label{key definition}
Notation are as above. Define
\[E_{M,L,\chi}:=[\bar{L}]^-_\chi\circ[\bar{M}]^+_\chi(E_\chi),\]
which is called as \textbf{the Eisenstein series associated with $\fm$}. Here
\begin{align*}
  E_\chi:=-\frac{1}{2g(\chi)}\sum_{a\in(\BZ/\ff_\chi\BZ)^\times}\sum_{b\in(\BZ/\ff^2_\chi\BZ)^\times}\chi(a)\cdot\chi(b)\cdot\phi_{(\frac{a}{\ff_\chi},\frac{b}{\ff^2_\chi})},
\end{align*}
with $g(\chi):=\sum_{t\in\BZ/\ff_\chi\BZ}\chi(t)\cdot e^{2\pi it}$ being the Gauss sum of $\chi$.
\end{defn}

Let us have a closer look at the above functions. From Eq.(\ref{3.2}), we find that
\begin{align*}
E_\chi=&-\frac{\delta_\chi}{4\pi i(z-\overline{z})}-\frac{1}{4g(\chi)}\sum_{a\in({\BZ}/{\ff_\chi\BZ})^\times}\sum_{b\in({\BZ}/{\ff_\chi^2\BZ})^\times}\chi(a)\cdot\chi(b)\cdot B_2(\frac{a}{\ff_\chi})\\
&+\frac{1}{g(\chi)}\sum_{a\in({\BZ}/{\ff_\chi\BZ})^\times}\sum_{b\in({\BZ}/{\ff_\chi^2\BZ})^\times}\chi(a)\cdot\chi(b)\cdot P_{(\frac{a}{\ff_\chi},\frac{b}{\ff_\chi^2})},
\end{align*}
where $\delta_\chi=1$ or $0$ according to $\chi=1$ or not. By Eq.~(\ref{3.3}), we have
\begin{align*}
\sum_{a\in({\BZ}/{\ff_\chi\BZ})^\times}\sum_{b\in({\BZ}/{\ff_\chi^2\BZ})^\times}\chi(a)\cdot\chi(b)\cdot P_{(\frac{a}{\ff_\chi},\frac{b}{\ff_\chi^2})}&=\sum^{\infty}_{k,m=1}\frac{k\chi(k)}{\ff_\chi}\left(\sum_{y\in({\BZ}/{\ff_\chi^2\BZ})^{\times}}\chi(y)e^{2\pi i\frac{my}{\ff_\chi^2}}\right)e^{2\pi i\frac{mk}{\ff_\chi}z}\\
&=\sum^{\infty}_{k,m=1}\frac{k\chi(k)}{\ff_\chi}\left(\sum_{y\in({\BZ}/{\ff_\chi^2\BZ})^{\times}}\chi(y)e^{2\pi i\frac{my}{\ff_\chi}}\right)e^{2\pi imkz}\\
&=g(\chi)\sum^{\infty}_{k,m=1}k\cdot\chi(k)\cdot\chi^{-1}(m)\cdot e^{2\pi imkz},
\end{align*}
where $\chi(n)$ is defined to be $0$ when $(n,\ff_\chi)\neq1$ as usual, it follows that
\begin{align}\label{3.5}
  E_\chi=-\frac{\delta_\chi}{4\pi i(z-\overline{z})}+a_0(E_\chi;[\infty])+\sum^{\infty}_{n=1}\sigma_{\chi}(n)\cdot \fq^n,
\end{align}
with
\begin{align}\label{3.6}
a_0(E_\chi;[\infty])=
\begin{cases}
-\frac{1}{24} &,\text{if}\ \chi=1\\
\ \ 0 &,\text{otherwise}\
\end{cases},
\end{align}
and
\begin{align}\label{3.7}
\sigma_\chi(n):=
\sum_{1\leq d\mid n}d\cdot\chi(d)\cdot\chi^{-1}({n}/{d}).
\end{align}
In particular, we find that $a_1(E_\chi;[\infty])=1$, that is to say, $E_\chi$ is normalized. Since the operators $[\bar{M}]^+_\chi$ and $[\bar{L}]^-_{\chi}$ do not change the first term of the $\fq$-expansion, we find that $E_{M,L,\chi}$ is also normalized. Moreover, we claim that $E_{M,L,\chi}$ is holomorphic and hence is an Eisenstein series of level $ML$. This is clear if $\chi\neq1$. On the other hand, if $\chi=1$, then we have $\ff_\chi=1$. Since $M>1$ and $[M]^+(\frac{1}{z-\overline{z}})=0$ , we find ny Eq.~(\ref{3.5}) that $E_{M,L,1}$ is also holomorphic which completes the proof of the claim.

\begin{lem}\label{3.8}
For any Dirichlet character $\chi$ of conductor $\ff_\chi$, we have
\begin{align*}
\CT^{\Gamma_0(\ff^2_\chi)}_\ell(E_\chi)=
\begin{cases}
  [\chi(\ell)^{-1}+\ell\cdot\chi(\ell)]\cdot E_\chi &, \text{if}\ \ell\nmid\ff_\chi\\
  0 &, \text{if}\ \ell\mid\ff_\chi.
\end{cases}
\end{align*}
\end{lem}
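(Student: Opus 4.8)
The plan is to compute the action of the Hecke operators directly on the $\fq$-expansion of $E_\chi$ obtained in Eq.~(\ref{3.5}). Recall from that equation that, after discarding the non-holomorphic term which is annihilated by all Hecke operators when $\chi\neq 1$ (and, when $\chi=1$, is handled separately since $\ff_\chi=1$ forces the lemma into the classical $E_2$ situation where one argues on $\Gamma_0(1)$), we have $E_\chi = a_0(E_\chi;[\infty]) + \sum_{n\geq 1}\sigma_\chi(n)\,\fq^n$ with $\sigma_\chi(n)=\sum_{d\mid n} d\,\chi(d)\,\chi^{-1}(n/d)$. The first step is to identify $E_\chi$ as the standard weight-two Eisenstein series attached to the pair of characters $(\chi^{-1},\chi)$: its $n$-th coefficient for $n\geq 1$ is exactly the twisted divisor sum $\sigma_{\chi^{-1},\chi}(n)$. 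This is a multiplicative-in-$n$ arithmetic function, and its Dirichlet series factors as $L(s-1,\chi)\,L(s,\chi^{-1})$, which exhibits $E_\chi$ as a Hecke eigenform away from $\ff_\chi$ with the stated eigenvalue.

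Concretely, I would proceed as follows. For a prime $\ell\nmid\ff_\chi$, the operator $\CT^{\Gamma_0(\ff_\chi^2)}_\ell$ acts on $\fq$-expansions by the usual formula $a_n\mapsto a_{\ell n} + \ell\,a_{n/\ell}$ (with $a_{n/\ell}=0$ if $\ell\nmid n$), together with the constant-term contribution. Using multiplicativity of $\sigma_\chi$, one checks the local identity at $\ell$: writing $\sigma_\chi(\ell^{k}) = \sum_{j=0}^{k}\ell^{j}\chi(\ell)^{j}\chi(\ell)^{-(k-j)}$, a short manipulation gives $\sigma_\chi(\ell^{k+1}) + \ell\,\sigma_\chi(\ell^{k-1}) = (\chi(\ell)^{-1}+\ell\,\chi(\ell))\,\sigma_\chi(\ell^{k})$, and hence the same recursion for $\sigma_\chi(\ell n)+\ell\,\sigma_\chi(n)=(\chi(\ell)^{-1}+\ell\chi(\ell))\sigma_\chi(n)$ for all $n$ by multiplicativity. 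One then verifies the constant term is compatible: when $\chi\neq 1$ the constant term is $0$ and there is nothing to check, and when $\chi=1$ one uses the classical fact that $\CT_\ell$ sends the (non-modular) $E_2$ to $(1+\ell)E_2$ on the relevant expansion. For $\ell\mid\ff_\chi$, the operator is $a_n\mapsto a_{\ell n}$, and since $\chi(\ell)=0$ one gets $\sigma_\chi(\ell n)=0$ for every $n\geq 1$ (every divisor of $\ell n$ is divisible by $\ell$ on one side or the other, so each term $d\,\chi(d)\,\chi^{-1}(n\ell/d)$ vanishes), and the constant term also vanishes; hence $\CT_\ell^{\Gamma_0(\ff_\chi^2)}(E_\chi)=0$.

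The only genuinely delicate point is bookkeeping at the bad prime $q=\mathrm{char}$ of the coefficient ring versus primes dividing $\ff_\chi$, and more importantly making sure the operator $\CT^{\Gamma_0(\ff_\chi^2)}_\ell$ really is computed at the correct level $\ff_\chi^2$ rather than $\ff_\chi$: for $\ell\mid\ff_\chi$ we are in the ``$\ell\mid N$'' case of the Hecke operator (with $N=\ff_\chi^2$), so the formula is the one-term sum $\sum_{k=0}^{\ell-1}\left(\begin{smallmatrix}1&k\\0&\ell\end{smallmatrix}\right)$, consistent with what is used above. I expect the main obstacle to be not any single computation but rather assembling the distribution-law description of $E_\chi$ from Definition~\ref{key definition} with the Fourier expansion in Eq.~(\ref{3.5}) carefully enough that the multiplicativity of $\sigma_\chi$ is transparent; once that is in place, the eigenvalue identity is a routine local check prime by prime.
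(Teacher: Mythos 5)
Your proof is correct, but it follows a genuinely different route from the paper's. The paper never touches the $\fq$-expansion in this lemma: for $\ell\nmid\ff_\chi$ it invokes Stevens' explicit formula (Proposition 2.4.7 of \cite{St}) for the action of $\CT_\ell$ on the individual functions $\phi_{(a/\ff_\chi,\,b/\ff_\chi^2)}$, namely $\CT_\ell\phi_{(a/\ff_\chi,b/\ff_\chi^2)}=\phi_{(a/\ff_\chi,b\ell/\ff_\chi^2)}+\ell\cdot\phi_{(a\ell'/\ff_\chi,b/\ff_\chi^2)}$, and reads off the eigenvalue by reindexing the character sum; for $\ell\mid\ff_\chi$ it rewrites $E_\chi$ via the distribution law~(\ref{3.4}) and shows the resulting inner sum depends only on $b$ modulo $\ff_\chi/\ell$, so primitivity of $\chi$ kills it. You instead identify $E_\chi$ through Eq.~(\ref{3.5}) as the classical Eisenstein series attached to $(\chi^{-1},\chi)$ and run the standard local check on the multiplicative coefficients $\sigma_\chi(n)$, closing the argument with the fact that a holomorphic form with vanishing Fourier expansion at $\infty$ is identically zero. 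Both are complete; the paper's version stays inside the $\phi_{\underline{x}}$ formalism it needs anyway for the constant-term computations at all cusps later in the section, while yours is more elementary and arguably cleaner at the primes $\ell\mid\ff_\chi$, where $\chi(\ell)=0$ makes every term of $\sigma_\chi(\ell n)$ vanish outright. Two small points to tighten: the reduction to the $\fq$-expansion silently uses that $\CT_\ell(E_\chi)-(\chi(\ell)^{-1}+\ell\chi(\ell))E_\chi$ is determined by its expansion at the single cusp $\infty$ (fine over $\BC$ for holomorphic forms, but worth saying), and for $\chi=1$ you should note explicitly that the non-holomorphic term $(z-\bar z)^{-1}$ is fixed by each upper-triangular coset representative, hence is scaled by exactly $\ell+1$, so that it too is absorbed into the eigenvalue identity rather than merely ``handled separately.''
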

\begin{proof}
If $\ell\nmid\ff_\chi$ is a prime, let $\ell'\in\BZ$ such that $\ell\cdot\ell'\equiv1\pmod{\ff_\chi}$, then by \cite{St} Proposition 2.4.7 we have that
\[\CT^{\Gamma_0(\ff^2_\chi)}_\ell(\phi_{(\frac{a}{\ff_\chi},\frac{b}{\ff^2_\chi})})=\phi_{(\frac{a}{\ff_\chi},\frac{b\ell}{\ff^2_\chi})}+\ell\cdot\phi_{(\frac{a\ell'}{\ff_\chi},\frac{b}{\ff^2_\chi})}.\]
It follows that
\begin{align*}
\CT^{\Gamma_0(\ff^2_\chi)}_\ell(E_\chi)&=\sum_{a\in\BZ/\ff_\chi\BZ}\sum_{b\in\BZ/\ff^2_\chi\BZ}\chi(a)\chi(b)\left(\phi_{(\frac{a}{\ff_\chi},\frac{b\ell}{\ff^2_\chi})}+\ell\cdot\phi_{(\frac{a\ell'}{\ff_\chi},\frac{b}{\ff^2_\chi})}\right)\\
&=\left(\chi^{-1}(\ell)+\ell\cdot\chi(\ell)\right)\cdot E_\chi,
\end{align*}
which proves the assertion for those primes not dividing $\ff_\chi$.

On the other hand, by the distribution law, we have that
\begin{align*}
  E_\chi&=\sum_{a\in\BZ/\ff_\chi\BZ}\sum_{b\in\BZ/\ff^2_\chi\BZ}\chi(a)\chi(b)\phi_{(\frac{a}{\ff_\chi},\frac{b}{\ff^2_\chi})}\\
  &=\sum_{a,b\in\BZ/\ff_\chi\BZ}\chi(a)\chi(b)\phi_{(\frac{a}{\ff_\chi},\frac{b}{\ff_\chi})}|\left(
                                                                                               \begin{array}{cc}
                                                                                                 \ff_\chi & 0 \\
                                                                                                 0 & 1 \\
                                                                                               \end{array}
                                                                                             \right).
\end{align*}
Thus, if $\ell$ is a prime with $\ell\mid\ff_\chi$, then
\begin{align*}
  \CT^{\Gamma_0(\ff_\chi)}(E_\chi)&=\sum_{a,b\in\BZ/\ff_\chi\BZ}\chi(a)\chi(b)\phi_{(\frac{a}{\ff_\chi},\frac{b}{\ff_\chi})}|\left(
                                                                                               \begin{array}{cc}
                                                                                                 \ff_\chi & 0 \\
                                                                                                 0 & 1 \\
                                                                                               \end{array}
                                                                                             \right)\sum^{\ell-1}_{k=0}\left(
                                                                                                                         \begin{array}{cc}
                                                                                                                           1 & k \\
                                                                                                                           0 & \ell \\
                                                                                                                         \end{array}
                                                                                                                       \right)\\
  &=\left[\sum_{a,b\in\BZ/\ff_\chi\BZ}\chi(a)\chi(b)\phi_{(\frac{a}{\ff_\chi},\frac{b}{\ff_\chi})}|\sum^{\ell-1}_{k=0}\left(
                                                                                               \begin{array}{cc}
                                                                                                 1 & \frac{\ff_\chi}{\ell}k \\
                                                                                                 0 & 1 \\
                                                                                               \end{array}
                                                                                             \right)\right]|\left(
                                                                                                                         \begin{array}{cc}
                                                                                                                           \frac{\ff_\chi}{\ell} & 0 \\
                                                                                                                           0 & 1 \\
                                                                                                                         \end{array}
                                                                                                                       \right)\\
  &=\left[\sum_{a,b\in\BZ/\ff_\chi\BZ}\chi(a)\chi(b)\phi_{(\frac{a}{\ff_\chi},\frac{b}{\ff_\chi}+\sum^{\ell-1}_{k=0}\frac{ak}{\ell})}\right]|\left(
                                                                                                                         \begin{array}{cc}
                                                                                                                           \frac{\ff_\chi}{\ell} & 0 \\
                                                                                                                           0 & 1 \\
                                                                                                                         \end{array}
                                                                                                                       \right),
\end{align*}
with the function $\BZ/\ff_\chi\rightarrow\BC,\ b\mapsto\sum_{a\in\BZ/\ff_\chi\BZ}\chi(a)\phi_{(\frac{a}{\ff_\chi},\frac{b}{\ff_\chi}+\sum^{\ell-1}_{k=0}\frac{ak}{\ell})}$, depends only on $b\pmod{\ff_\chi/\ell}$. So we find that the above sum is zero as $\chi$ is primitive of conductor $\ff_\chi$ and hence completes the proof.
\end{proof}

\begin{prop}\label{Hecke eigen}
Let $N$ be an odd positive integer and $\fm\subseteq\BT_0(N)$ be a maximal Eisenstein ideal with $(\fm,6N)=1$. Let $E_{M,L,\chi}$ be the Eisenstein series associated with $\fm$ as in Definition~\ref{key definition}. Then we have
\begin{align*}
  \CT^{\Gamma_0(N)}_\ell(E_{M,L,\chi})=
\begin{cases}
      \left(\chi^{-1}(\ell)+\ell\cdot\chi(\ell)\right)\cdot E_{M,L,\chi} &, \text{if } \ell\nmid N\\
      \chi^{-1}(\ell)\cdot E_{M,L,\chi} &, \text{if } \ell\mid M/(M,L)\\
      \ell\cdot\chi(\ell)\cdot E_{M,L,\chi}&, \text{if } \ell\mid L/(M,L)\\
      0 &, \text{if } \ell\mid (M,L);
    \end{cases}
\end{align*}
\end{prop}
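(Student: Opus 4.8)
The plan is to obtain $E_{M,L,\chi}$ from $E_\chi$ by attaching one prime at a time, using that the operators $[p]^{\pm}_\chi$ (over primes $p\nmid\ff_\chi$) all commute with one another, so the composite $[\bar{L}]^-_\chi\circ[\bar{M}]^+_\chi$ may be factored and reordered freely. The two inputs are Lemma~\ref{3.8}, which records that $E_\chi$ is a $\CT_\ell$-eigenform of eigenvalue $\chi^{-1}(\ell)+\ell\cdot\chi(\ell)$ for $\ell\nmid\ff_\chi$ and of eigenvalue $0$ for $\ell\mid\ff_\chi$, together with the commutation formulas of Lemma~\ref{lemma3.3}. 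First I would isolate the following elementary observation. Fix a prime $\ell\nmid\ff_\chi$ and let $F\in M_2(\Gamma_0(N'),\BC)$. If $\ell\nmid N'$ and $\CT^{\Gamma_0(N')}_\ell F=(\chi^{-1}(\ell)+\ell\chi(\ell))F$, then by Lemma~\ref{lemma3.3}(2) one has
\[\CT^{\Gamma_0(N'\ell)}_\ell\bigl([\ell]^+_\chi F\bigr)=\chi^{-1}(\ell)\cdot[\ell]^+_\chi F\qquad\text{and}\qquad \CT^{\Gamma_0(N'\ell)}_\ell\bigl([\ell]^-_\chi F\bigr)=\ell\chi(\ell)\cdot[\ell]^-_\chi F,\]
since both sides of the first identity reduce to $\chi^{-1}(\ell)F-\gamma_\ell F$ and both sides of the second to $\ell\chi(\ell)F-\gamma_\ell F$. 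If instead $\ell\mid N'$ and $\CT^{\Gamma_0(N')}_\ell F=\chi^{-1}(\ell)\cdot F$, then Lemma~\ref{lemma3.3}(3) gives $\CT^{\Gamma_0(N'\ell)}_\ell\bigl([\ell]^-_\chi F\bigr)=0$; symmetrically, if $\CT^{\Gamma_0(N')}_\ell F=\ell\chi(\ell)\cdot F$ then $\CT^{\Gamma_0(N'\ell)}_\ell\bigl([\ell]^+_\chi F\bigr)=0$.

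Then I would run through the cases of the statement according to how $\ell$ meets $\ff_\chi$, $\bar{M}$ and $\bar{L}$; recall $M=\ff_{\bar{\chi}}\bar{M}$ and $L=\ff_{\bar{\chi}}\bar{L}$ with $\bar{M},\bar{L}$ square-free and prime to $\ff_{\bar{\chi}}$, so that $\ell\mid M/(M,L)$ means $\ell\mid\bar{M}$, $\ell\nmid\bar{L}$; $\ell\mid L/(M,L)$ means $\ell\mid\bar{L}$, $\ell\nmid\bar{M}$; and $\ell\mid(M,L)$ means $\ell\mid\ff_{\bar{\chi}}$ or $\ell\mid(\bar{M},\bar{L})$. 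If $\ell\nmid N$, then $\ell$ is prime to $\ff_\chi\bar{M}\bar{L}$, so by Lemma~\ref{lemma3.3}(1) the operator $\CT^{\Gamma_0(N)}_\ell$ commutes past $[\bar{L}]^-_\chi\circ[\bar{M}]^+_\chi$ and the claim follows from Lemma~\ref{3.8} applied to $E_\chi$; the same reasoning, with Lemma~\ref{3.8} now giving eigenvalue $0$, handles $\ell\mid\ff_{\bar{\chi}}$.

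For $\ell\mid\bar{M}$ and $\ell\nmid\bar{L}$, commutativity lets me write $E_{M,L,\chi}=[\ell]^+_\chi(F)$ with $F=[\bar{L}]^-_\chi\circ[\bar{M}/\ell]^+_\chi(E_\chi)$ of level $N'$ prime to $\ell$; by Lemma~\ref{lemma3.3}(1) and Lemma~\ref{3.8}, $F$ is a $\CT^{\Gamma_0(N')}_\ell$-eigenform of eigenvalue $\chi^{-1}(\ell)+\ell\chi(\ell)$, so the observation above gives $\CT_\ell(E_{M,L,\chi})=\chi^{-1}(\ell)E_{M,L,\chi}$. The case $\ell\mid\bar{L}$, $\ell\nmid\bar{M}$ is identical with $[\ell]^-_\chi$ replacing $[\ell]^+_\chi$, giving eigenvalue $\ell\chi(\ell)$. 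Finally, for $\ell\mid(\bar{M},\bar{L})$, commutativity gives $E_{M,L,\chi}=[\ell]^-_\chi\circ[\ell]^+_\chi(G)$ with $G=[\bar{L}/\ell]^-_\chi\circ[\bar{M}/\ell]^+_\chi(E_\chi)$ of level prime to $\ell$ and, again by Lemma~\ref{lemma3.3}(1) and Lemma~\ref{3.8}, a $\CT_\ell$-eigenform of eigenvalue $\chi^{-1}(\ell)+\ell\chi(\ell)$; then $[\ell]^+_\chi(G)$ has eigenvalue $\chi^{-1}(\ell)$ at a level exactly divisible by $\ell$, and applying $[\ell]^-_\chi$ forces $\CT_\ell(E_{M,L,\chi})=0$. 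These are exactly the four cases in the statement. The one delicate point is this last case: one must switch to the ``$p\mid N$'' formula of Lemma~\ref{lemma3.3}(3) for the outer operator and check that the two successive $\ell$-stabilizations collapse the eigenvalue to $0$ rather than to $\chi^{-1}(\ell)$ or $\ell\chi(\ell)$; everything else is bookkeeping with commuting operators.
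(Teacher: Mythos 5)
Your proposal is correct and follows essentially the same route as the paper: peel off the $[\ell]^{\pm}_\chi$ factors one at a time using their commutativity and the relations of Lemma~\ref{lemma3.3}, reducing to the eigenvalue computation for $E_\chi$ in Lemma~\ref{3.8}; your ``elementary observation'' is exactly the inline computation the paper performs, including the collapse to $0$ in the case $\ell\mid(\bar{M},\bar{L})$. The only bookkeeping point you leave implicit, which the paper states explicitly, is that for $\ell\mid ML$ the operators $\CT^{\Gamma_0(ML)}_\ell$ and $\CT^{\Gamma_0(N)}_\ell$ are given by the same analytic formula, so the eigenvalue computed at level $ML$ (or the intermediate levels $N'\ell$, $N'\ell^2$) is the one asserted at level $N$.
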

\begin{proof}
The assertion follows from Lemma~\ref{lemma3.3},(1) and Lemma~\ref{3.8},(1). If $p$ is a prime with $p\mid N$, then both $\CT^{\Gamma_0(N)}_p$ and $\CT^{\Gamma_0(ML)}_p$ are analytically given as $\sum^{p-1}_{k=0}\left(
                   \begin{array}{cc}
                     1 & k \\
                     0 & p \\
                   \end{array}
                 \right)
$, so we have that
\[\CT^{\Gamma_0(N)}_p(E_{M,L,\chi})=\CT^{\Gamma_0(ML)}_p(E_{M,L,\chi})\]
Thus we find that:

$\bullet$ If $p\mid M/(M,L)$, or equivalently, $p\mid \bar{M}/(\bar{M},\bar{L})$, then we have by (1) and (2) of Lemma~\ref{lemma3.3} that
\begin{align*}
&\CT^{\Gamma_0(ML)}_p\circ[\bar{L}]^-_\chi\circ[\bar{M}]^+_\chi(E_\chi)\\
=&[\bar{L}]^-_\chi\circ[\bar{M}/p]^+_\chi\circ(\CT^{\Gamma_0(\ff^2_\chi)}_p-\gamma_p-p\cdot\chi(p))(E_\chi)\\
=&[\bar{L}]^-_\chi\circ[\bar{M}/p]^+_\chi\circ(\chi^{-1}(p)-\gamma_p)(E_\chi)\\
=&\chi^{-1}(p)\cdot[\bar{L}]^-_\chi\circ[\bar{M}/p]^+_\chi\circ[p]^+_\chi(E_\chi)\\
=&\chi^{-1}(p)\cdot[\bar{L}]^-_\chi\circ[\bar{M}]^+_\chi(E_\chi),
\end{align*}
and therefore $\CT^{\Gamma_0(N)}_p(E_{M,L,\chi})=\chi^{-1}(p)\cdot E_{M,L,\chi}$.

$\bullet$ If $p\mid L/(M,L)$, or equivalently, $p\mid \bar{L}/(\bar{M},\bar{L})$, then we have by (1) and (2) of Lemma~\ref{lemma3.3} that
\begin{align*}
&\CT^{\Gamma_0(ML)}_p\circ[\bar{L}]^-_\chi\circ[\bar{M}]^+_\chi(E_\chi)\\
=&[\bar{L}/p]^-_\chi\circ[\bar{M}]^+_\chi\circ(\CT^{\Gamma_0(\ff^2_\chi)}_p-\gamma_p-\chi^{-1}(p))(E_\chi)\\
=&[\bar{L}/p]^-_\chi\circ[\bar{M}]^+_\chi\circ(p\cdot\chi(p)-\gamma_p)(E_\chi)\\
=&p\cdot\chi(p)\cdot[\bar{L}/p]^-_\chi\circ[\bar{M}]^+_\chi\circ[p]^-_\chi(E_\chi)\\
=&p\cdot\chi(p)\cdot[\bar{L}]^-_\chi\circ[\bar{M}]^+_\chi(E_\chi),
\end{align*}
and therefore $\CT^{\Gamma_0(N)}_p(E_{M,L,\chi})=p\cdot\chi(p)\cdot E_{M,L,\chi}$.

$\bullet$ Now if $p\mid(M,L)$ but $p\nmid\ff_\chi$, then we have by (3) of Lemma~\ref{lemma3.3} that
\begin{align*}
&\CT^{\Gamma_0(ML)}_p\circ[\bar{L}]^-_\chi\circ[\bar{M}]^+_\chi(E_\chi)\\
=&(\CT^{\Gamma_0(ML/p)}_p-\chi^{-1}(p))\circ[\bar{L}/p]^-_\chi\circ[\bar{M}]^+_\chi(E_\chi)\\
=&(\chi^{-1}(p)-\chi^{-1}(p))\circ[\bar{L}/p]^-_\chi\circ[\bar{M}]^+_\chi(E_\chi)=0.
\end{align*}

$\bullet$ Finally if $p\mid\ff_\chi$, then $(p,\bar{M})=(p,\bar{L})=1$. So we find by Lemma~\ref{3.3} and Lemma~\ref{3.8} that
\begin{align*}
&\CT^{\Gamma_0(ML)}_p\circ[\bar{L}]^-_\chi\circ[\bar{M}]^+_\chi(E_\chi)\\
=&[\bar{M}]^-_\chi\circ[\bar{L}]^+_\chi\circ\CT^{\Gamma_0(\ff^2_\chi)}_p(E_\chi)=0.
\end{align*}
We have thus completed the proof of the proposition.
\end{proof}

\textbf{3.4. }Fix a positive integer $N$. For any $1\leq d\mid N$, let $\fB_d$ be a full set of representatives for $(\BZ/(d,N/d)\cdot\BZ)^\times$. Then we have the following

\begin{lem}\label{representative}
  $cusp(\Gamma_0(N))=\{[\frac{dx}{N}]|1\leq d\mid N,x\in{\fB_d}\}$.
\end{lem}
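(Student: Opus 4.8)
The plan is to realise $cusp(\Gamma_0(N))$ as $\Gamma_0(N)\backslash\BP^1(\BQ)$ and to reduce its enumeration to a combinatorial problem over $\BZ/N\BZ$. First I would use that $\SL_2(\BZ)$ acts transitively on $\BP^1(\BQ)$ with $\infty=\binom{1}{0}$ stabilised by $\Gamma_\infty:=\{\pm\left(\begin{smallmatrix}1&n\\0&1\end{smallmatrix}\right):n\in\BZ\}$, so the cusps are $\Gamma_0(N)\backslash\SL_2(\BZ)/\Gamma_\infty$; together with the standard bijection $\Gamma_0(N)\backslash\SL_2(\BZ)\xrightarrow{\sim}\BP^1(\BZ/N\BZ)$ attaching to a matrix its bottom row $(c:d)\bmod N$, this makes $cusp(\Gamma_0(N))$ the set of orbits of $\BP^1(\BZ/N\BZ)$ under the two moves $(c:d)\mapsto(uc:ud)$, $u\in(\BZ/N\BZ)^\times$, and $(c:d)\mapsto(c:d+cn)$, $n\in\BZ$.

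The heart of the argument is to list these orbits. For a class $(c:d)$ the divisor $e:=\gcd(c,N)$ of $N$ is fixed by both moves, hence is an invariant of the cusp; scaling lets one normalise $c=e$, after which $\gcd(d,e)=1$, the translations make $d$ well defined only modulo $e$, and the scalings that still fix $c=e$ are exactly the units $u\equiv 1\pmod{N/e}$, acting on $d\bmod e$ through a subgroup $H\le(\BZ/e\BZ)^\times$. A prime-by-prime Chinese Remainder Theorem computation identifies $H$ as the kernel of the surjection $(\BZ/e\BZ)^\times\to(\BZ/\gcd(e,N/e)\BZ)^\times$; hence the orbits correspond bijectively to pairs $(e,\bar d)$ with $e\mid N$ and $\bar d\in(\BZ/\gcd(e,N/e)\BZ)^\times$, distinct pairs giving distinct cusps.

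It remains to translate this back into fractions. Put $d:=N/e$, so $e=N/d$ and $\gcd(e,N/e)=\gcd(d,N/d)$. Unwinding the bijection, the cusp with data $(e,\bar d)$ is represented by a rational number of the form $\tfrac{x}{e}=\tfrac{dx}{N}$ with $\gcd(x,e)=1$, the residue class of $x$ modulo $\gcd(d,N/d)$ being the one determined by $\bar d$; such an $x$ exists since $\gcd(d,N/d)\mid e$ and a unit modulo a divisor of $e$ lifts to a unit modulo $e$. Taking $x$ to be the element of $\fB_d$ in that class — and choosing, as one may, the representatives in $\fB_d$ to be prime to $N/d$ so that $\tfrac{dx}{N}=\tfrac{x}{N/d}$ is already in lowest terms — one gets exactly $\{[\tfrac{dx}{N}]:1\le d\mid N,\ x\in\fB_d\}$, which by the previous paragraph is all of $cusp(\Gamma_0(N))$ with no repetition. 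The one step beyond bookkeeping is the computation of the stabiliser $H$, equivalently the assertion that $\tfrac{dx}{N}$ and $\tfrac{dx'}{N}$ are $\Gamma_0(N)$-equivalent precisely when $x\equiv x'\pmod{\gcd(d,N/d)}$; an alternative, entirely elementary route is to complete $\binom{a}{c}$ (for $\tfrac ac$ in lowest terms) to an element of $\SL_2(\BZ)$, left-multiply by suitable elements of $\Gamma_0(N)$ to replace $c$ by $\gcd(c,N)\mid N$, rewrite with denominator $N$, and then verify the same congruence directly — but this congruence is the crux in either approach.
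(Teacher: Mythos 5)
Your proposal is correct, but it takes a genuinely different route from the paper's. The paper quotes the standard count $\#cusp(\Gamma_0(N))=\sum_{d\mid N}\varphi((d,N/d))$ and then proves only injectivity: assuming $[\frac{d_1x_1}{N}]=[\frac{d_2x_2}{N}]$, it writes each cusp as $\sigma_i(\infty)$ for some $\sigma_i\in\SL_2(\BZ)$ with lower-left entry $N/d_i$, compares the bottom rows of the resulting matrix identity to force $d_1=d_2$, and reads off $x_2\equiv x_1\pmod{(d,N/d)}$ from the top row; the bijection then follows by matching cardinalities. You instead classify the orbits of $\BP^1(\BZ/N\BZ)$ under the Borel action from scratch, which delivers surjectivity, injectivity and the count simultaneously, at the price of invoking the bijection $\Gamma_0(N)\backslash\SL_2(\BZ)\simeq\BP^1(\BZ/N\BZ)$ and the surjectivity of $(\BZ/N\BZ)^\times\to(\BZ/m\BZ)^\times$ for $m\mid N$. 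Your stabilizer computation identifying $H$ as $\ker\left((\BZ/e\BZ)^\times\to(\BZ/(e,N/e)\BZ)^\times\right)$ encodes exactly the congruence the paper extracts by hand (note only that your $\bar d$ is the inverse of $x$ modulo $e$, which is harmless since inversion permutes the unit classes), and the ``alternative elementary route'' you sketch at the end is essentially the paper's injectivity argument. One point you handle more explicitly than the paper: the statement is only literally correct if the representatives in $\fB_d$ are chosen prime to $N/d$, since otherwise $\frac{dx}{N}$ simplifies to a cusp with a different denominator; the paper defers this to the remark following the lemma.
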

\begin{proof}
Since $\#cusp(\Gamma_0(N))=\sum_{1\leq d\mid N}\varphi(d,N/d)$, it is enough to prove that $[\frac{d_1x_1}{N}]=[\frac{d_2x_2}{N}]$ implies that $d_1=d_2$ and $x_1=x_2$.

So suppose that $[\frac{d_1x_1}{N}]=[\frac{d_2x_2}{N}]$, then there exists some $\gamma\in\Gamma_0(N)$ such that $\frac{d_2x_2}{N}=\gamma(\frac{d_1x_1}{N})$. It follows that
\[\left(
    \begin{array}{cc}
      x_2 & u_2 \\
      N/d_2 & v_2 \\
    \end{array}
  \right)(\infty)=\gamma\cdot\left(
                               \begin{array}{cc}
                                 x_1 & u_1 \\
                                 N/d_1 & v_1 \\
                               \end{array}
                             \right)(\infty),
\]
where $\left(
         \begin{array}{cc}
           x_i & u_i \\
           N/d_i & v_i \\
         \end{array}
       \right)\in\SL_2(\BZ)
$ for each $i=1,2$, so that
\[\left(
    \begin{array}{cc}
      x_2 & u_2 \\
      N/d_2 & v_2 \\
    \end{array}
  \right)\cdot\left(
                \begin{array}{cc}
                  1 & n \\
                  0 & 1 \\
                \end{array}
              \right)
  =\pm\gamma\cdot\left(
                               \begin{array}{cc}
                                 x_1 & u_1 \\
                                 N/d_1 & v_1 \\
                               \end{array}
                             \right)(\infty)
\]
for some $n\in\BZ$. Denote $\pm\gamma=\left(
                                         \begin{array}{cc}
                                           a & b \\
                                           c & e \\
                                         \end{array}
                                       \right)
.$ Then we find that $N/d_2=cx_1+e(N/d_1)$, which implies that $(N/d_1)\mid(N/d_2)$ and vice versa, and therefore $d_1=d_2\triangleq d$. It then follows that
\[x_2=ax_1+b(N/d).\]
Since $N/d=cx_1+e(N/d)$, we find that $e\equiv1\pmod{d}$ so that $a\equiv1\pmod{d}$. Thus we find that
\[x_2\equiv x_1\pmod{(d,N/d)},\]
which implies that $x_1=x_2\in\fB_d$ and completes the proof of the lemma.
\end{proof}

\begin{remark}
In the following of this paper, we will always use the above representatives for cusps of $X_0(N)$. Note that, if it is necessary, we may even assume that $x$ to be prime to $N$.
\end{remark}

For any Dirichlet character $\chi$ of conductor $\ff_\chi$, let $E_\chi$ be as in Definition~\ref{key definition}.
These formulas can be used to determine the constant terms of $E_\chi$. Extend $\chi$ to be a function on $\BZ$ so that $\chi(n)=0$ if $(n,\ff_\chi)\neq1$. For any cusp $[\frac{dx}{\ff^2_\chi}]\in X_0(\ff^2_\chi)$ as in Lemma~\ref{representative}, choose $\left(
                                                                                                                     \begin{array}{cc}
                                                                                                                       x & u \\
                                                                                                                       {\ff^2_\chi}/{d} & v \\
                                                                                                                     \end{array}
                                                                                                                   \right)
\in\SL_2(\BZ)$ which maps $[\infty]$ to $[\frac{s^2tx}{\ff^2_\chi}]$. Then
\begin{align*}
  a_0(E_\chi;[\frac{s^2tx}{\ff^2_\chi}])&=-\frac{1}{4g(\chi)}\sum_{a\in({\BZ}/{\ff_\chi\BZ})^\times}\sum_{b\in({\BZ}/{\ff^2_\chi\BZ})^\times}\chi(a)\cdot\chi(b)\cdot B_2(\frac{xa}{\ff_\chi}+\frac{b}{d})\\
  &=-\frac{1}{4g(\chi)}\sum_{b\in({\BZ}/{\ff^2_\chi\BZ})^\times}\chi(b)\left(\sum_{a\in({\BZ}/{\ff_\chi\BZ})^\times}\chi(a)\cdot B_2(\frac{xa}{\ff_\chi}+\frac{b}{d})\right).
\end{align*}
Since the function in the above bracket depends only on $b$ modulo $d$ and $\chi$ is primitive of conductor $\ff_\chi$, we find that $a_0(E_\chi;[\frac{s^2tx}{\ff^2_\chi}])$ must be zero unless $\ff_\chi\mid d$. Moreover, if $d=\ff_\chi\cdot h$ is divided by $\ff_\chi$, then
\begin{align*}
  a_0(E_\chi;[\frac{dx}{\ff^2_\chi}])&=a_0(E_\chi;[\frac{hx}{\ff_\chi}])\\
  &=-\frac{1}{4g(\chi)}\sum_{a\in({\BZ}/{\ff_\chi\BZ})^\times}\sum_{b\in({\BZ}/{\ff^2_\chi\BZ})^\times}\chi(a)\cdot\chi(b)\cdot B_2(\frac{xa}{\ff_\chi}+\frac{b}{h\ff_\chi})\\
  &=-\frac{1}{4g(\chi)}\sum_{a\in({\BZ}/{\ff_\chi\BZ})^\times}\chi(a)\left(\sum_{b\in({\BZ}/{\ff^2_\chi\BZ})^\times}\chi(b)\cdot B_2(\frac{xa}{\ff_\chi}+\frac{b}{h\ff_\chi})\right),
\end{align*}
with the function in the bracket depends only on $a$ modulo $\ff_\chi/h$, so we find that the constant term is zero unless $h=1$. It follows that
\begin{align}
a_0(E_\chi;[\frac{dx}{\ff^2_\chi}])=
\begin{cases}
\chi^{-1}(x)\cdot n_\chi&,\text{if}\ d=\ff_\chi\\
0 &,\text{otherwise,}\
\end{cases}
\end{align}
where
\[n_\chi:=-\frac{\ff_\chi}{4g(\chi)}\sum_{a,b\in{\BZ}/{\ff_\chi\BZ}}\chi(a)\cdot\chi(b)\cdot B_2(\frac{a+b}{\ff_\chi}).\]

\section{Proof of the main theorem}
Recall that, to a maximal Eisenstein ideal $\fm$ with residue characteristic $q\nmid6N$, we can associated an Eisenstein series $E_{M,L,\chi}$, where $M,L$ are as in Eq.~(\ref{4.5}) and $\chi$ is the Teichmuller lifting of $\bar{\chi}$. And we see from Proposition~\ref{Hecke eigen} that $E_{M,L,\chi}$ is an eigenform. Let $\CT_0(N)[\chi]:=\CT_0(N)\otimes_{\BZ}\BZ[\chi]$ and $\BT_0(N)[\chi]:=\BT_0(N)\otimes_{\BZ}\BZ[\chi]$. Then we define
\begin{align}
  I^{(N)}_{M,L,\chi}:=\Im\left(\Ann_{\CT_0(N)[\chi]}(E_{M,L,\chi})\rightarrow\BT_0(N)[\chi]\right).
\end{align}

We are now going to study the quotient $\BT_0(N)[\chi]/I^{(N)}_{M,L,\chi}$ by relating it with $C_{\Gamma_0(N)}(E_{M,L,\chi})$ via the method of Stevens (see \cite{St2}). In general, let $g=\sum^{\infty}_{n=0}a_n\cdot\fq^{n}$ be a weight-$2$ modular form of some level. Then for any Dirichlet character $\eta$ and any prime $p$ with $(p,\ff_{\eta}\cdot\ff_{\chi})=1$, we have
\begin{align*}
[p]^+_{\chi}(g)&=g-\chi(p)\cdot g|\gamma_p\\
&=\sum^{\infty}_{n=0}(a_n-\chi(p)\cdot p\cdot a_{n/p})\cdot\fq^n,
\end{align*}
and therefore
\begin{align*}
  L([p]^+_{\chi}(g),\eta)&=\sum^{\infty}_{n=0}(a_n-\chi(p)\cdot p\cdot a_{n/p})\cdot\eta(n)\cdot n^{-s}\\
  &=(1-\chi(p)\cdot\eta(p)\cdot p^{1-s})\cdot L(g,\eta,s).
\end{align*}
Similarly, since we have
\begin{align*}
[p]^-_{\chi}(g)&=g-\frac{1}{p\cdot\chi(p)}\cdot g|\gamma_p\\
&=\sum^{\infty}_{n=0}(a_n-\chi(p)^{-1}\cdot a_{n/p})\cdot\fq^n,
\end{align*}
it follows that
\begin{align*}
  L([p]^-_{\chi}(g),\eta)&=\sum^{\infty}_{n=0}(a_n-\chi(p)^{-1}\cdot a_{n/p})\cdot\eta(n)\cdot n^{-s}\\
  &=(1-\chi(p)^{-1}\cdot\eta(p)\cdot p^{-s})\cdot L(g,\eta,s).
\end{align*}
Thus we find by Eq.~(\ref{3.5}) that, for any Dirichlet character $\eta$ with conductor $\ff_\eta$ prime to $N$, we have
\begin{align*}
  L(E_{M,L,\chi})=&\prod_{p\mid(L/\ff_\chi)}(1-\frac{\chi(p)^{-1}\eta(p)}{p^s})\cdot\prod_{p\mid(M/\ff_\chi)}(1-\frac{\chi(p)\eta(p)}{p^{s-1}})\\
  &\cdot L(\chi^{-1}\eta,s)\cdot L(\chi\eta,s-1).
\end{align*}
Let $S$ be the set of all primes in the arithmetic progression $-1+4N\BZ$. Then for any $\eta\in\fX^{\infty}_\S$ we have that
\begin{align*}
  \Lambda(E_{M,L,\chi},\eta,1)=&\prod_{p\mid(L/\ff_\chi)}(1-{\chi(p)^{-1}\eta(p)}{p^{-1}})\cdot\prod_{p\mid(M/\ff_\chi)}(1-{\chi(p)\eta(p)})\\
  &\cdot ({g(\bar{\eta})}/{2\pi i})\cdot L(\chi^{-1}\eta,1)\cdot L(\chi\eta,0).
\end{align*}
Then straightforward calculation yields that
\begin{align*}
   \Lambda_{\pm}(E_{M,L,\chi},\eta,1)=&\pm\tilde{\eta}(-1)\cdot\chi(\ff_{\tilde{\eta}})^{-1}\cdot\tilde{\eta}(\ff_\chi)\cdot({g(\chi^{-1})}/{L})\cdot (\frac{1}{2}B_{1,\overline{\chi^{-1}\tilde{\eta}}})\cdot(\frac{1}{2}B_{1\chi\tilde{\eta}})\\
   &\cdot\prod_{p\mid(L/\ff_\chi)}(p-{\chi(p)^{-1}\tilde{\eta}(p)})\cdot\prod_{p\mid(M/\ff_\chi)}(1-{\chi(p)\tilde{\eta}(p)}),
\end{align*}
where $\tilde{\eta}=\eta$ or $\eta\cdot(\frac{\cdot}{p_\eta})$ according as $\chi\eta(-1)=-1$ or not.

\begin{lem}\label{4.6}
Notations are as above. Then we have
\[\CP_{\Gamma_1(N)}(E_{M,L,\chi})=\frac{g(\chi^{-1})}{L}\cdot\BZ[\chi]+\CR_{\Gamma_1(N)}(E_{M,L,\chi}).\]
\end{lem}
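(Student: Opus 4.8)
The plan is to apply Stevens' criterion, namely the equivalence (St1) $\Leftrightarrow$ (St3) recalled in \S2, with the module $\CM := \frac{g(\chi^{-1})}{L}\cdot\BZ[\chi] + \CR_{\Gamma_1(N)}(E_{M,L,\chi})$. The inclusion "$\supseteq$" of the asserted identity is immediate once one checks that $\CM$ actually contains $\CR_{\Gamma_1(N)}(E_{M,L,\chi})$ (true by construction) and is contained in $\CP_{\Gamma_1(N)}(E_{M,L,\chi})$; the latter follows because $\CP_{\Gamma_1(N)} \supseteq \CR_{\Gamma_1(N)}$ always, so it remains to show $\frac{g(\chi^{-1})}{L} \in \CP_{\Gamma_1(N)}(E_{M,L,\chi})$, which comes out of the period computation for a suitable $\eta$. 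The substantive direction is "$\subseteq$", i.e. $\CP_{\Gamma_1(N)}(E_{M,L,\chi}) \subseteq \CM$, and for this I would verify condition (St3): that $\CR_{\Gamma_1(N)}(E_{M,L,\chi}) \subseteq \CM$ (clear) and that $\Lambda_{\pm}(E_{M,L,\chi},\eta,1) \in \CM[\eta, \frac{1}{p_\eta}]$ for every $\eta \in \fX_N^\infty$.

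First I would take the explicit formula for $\Lambda_{\pm}(E_{M,L,\chi},\eta,1)$ derived just above the lemma statement, namely the product of $\pm\tilde\eta(-1)\cdot\chi(\ff_{\tilde\eta})^{-1}\cdot\tilde\eta(\ff_\chi)$, the factor $g(\chi^{-1})/L$, the two half-Bernoulli factors $\tfrac12 B_{1,\overline{\chi^{-1}\tilde\eta}}$ and $\tfrac12 B_{1,\chi\tilde\eta}$, and the two finite Euler-type products over $p \mid L/\ff_\chi$ and $p \mid M/\ff_\chi$. The factor $g(\chi^{-1})/L$ is exactly the generator of the $\BZ[\chi]$-part of $\CM$, so the task reduces to showing that the remaining factor — call it $R_\eta$, the product of the root-of-unity prefactor, the two half-Bernoulli numbers, and the two finite products — lies in $\BZ[\chi,\eta,\tfrac{1}{p_\eta}]$ (equivalently in $\BZ[\chi][\eta,\tfrac1{p_\eta}]$). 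The root-of-unity prefactor is a unit in $\BZ[\chi,\eta]$; the two finite products have entries of the form $p - \chi(p)^{-1}\tilde\eta(p)$ and $1 - \chi(p)\tilde\eta(p)$, which are manifestly algebraic integers in $\BZ[\chi,\eta]$; so the only possible denominators come from the half-Bernoulli numbers. Here I would invoke the integrality/divisibility properties of generalized Bernoulli numbers that Stevens records (Theorem 4.2 of \cite{St2}): $\tfrac12 B_{1,\psi}$ is $p_\eta$-integral away from $p_\eta$ for any odd Dirichlet character $\psi$ whose conductor is divisible by $p_\eta$ to at most the first power (which holds for $\psi = \overline{\chi^{-1}\tilde\eta}$ and $\psi = \chi\tilde\eta$ since $\ff_\chi$ is prime to $p_\eta$ and $\tilde\eta$ has conductor a power of $p_\eta$), so that after inverting $p_\eta$ we get $\tfrac12 B_{1,\psi} \in \BZ[\chi,\eta,\tfrac1{p_\eta}]$. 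One also needs to check the parity: by construction $\tilde\eta$ is chosen precisely so that $\chi\tilde\eta$ (hence also $\chi^{-1}\tilde\eta$) is odd, so $B_{1,\chi\tilde\eta}$ and $B_{1,\overline{\chi^{-1}\tilde\eta}}$ are the nonzero branch and Stevens' theorem applies. Putting this together gives $\Lambda_{\pm}(E_{M,L,\chi},\eta,1) \in \frac{g(\chi^{-1})}{L}\BZ[\chi,\eta,\tfrac1{p_\eta}] \subseteq \CM[\eta,\tfrac1{p_\eta}]$, which is (St3).

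For the reverse containment $\frac{g(\chi^{-1})}{L} \in \CP_{\Gamma_1(N)}(E_{M,L,\chi})$ I would exhibit one character $\eta \in \fX_N^\infty$ for which the product $R_\eta$ is a $\BZ[\chi,\eta,\tfrac1{p_\eta}]$-unit, or at least for which $\Lambda_+(E_{M,L,\chi},\eta,1)$ together with a second value combine to produce $\frac{g(\chi^{-1})}{L}$ up to a unit; concretely one wants a prime $p_\eta \equiv -1 \pmod{4N}$ and a character $\eta$ of conductor $p_\eta$ such that neither half-Bernoulli number is divisible by any prime of $\BZ[\chi,\eta]$ above $p_\eta$ other than those already inverted and such that the two finite Euler products are units — this is arrangeable because the finite products are over the fixed finite sets of primes dividing $L/\ff_\chi$ and $M/\ff_\chi$, and by choosing $p_\eta$ large and $\eta$ generic the Bernoulli numbers are forced to be units at the relevant places (this is the standard "non-vanishing mod $\ell$" input, again available from Stevens' analysis). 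Thus $\CP_{\Gamma_1(N)}(E_{M,L,\chi})$ contains an element which is $\frac{g(\chi^{-1})}{L}$ times a unit, hence contains $\frac{g(\chi^{-1})}{L}\BZ[\chi]$, and adding $\CR_{\Gamma_1(N)}(E_{M,L,\chi})$ (which it always contains) gives $\CM \subseteq \CP_{\Gamma_1(N)}(E_{M,L,\chi})$.

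The main obstacle I anticipate is the bookkeeping around the generalized Bernoulli numbers: one must be careful that the conductor of $\chi\tilde\eta$ really is divisible by $p_\eta$ to exactly the first power so that Stevens' integrality statement (which gives $p_\eta$-integrality after inverting $p_\eta$, but only under such a hypothesis) is applicable, and one must correctly track the interplay between $\tilde\eta = \eta$ versus $\tilde\eta = \eta\cdot(\tfrac{\cdot}{p_\eta})$ depending on the parity of $\chi\eta$, since the twist by the Legendre symbol at $p_\eta$ changes the conductor at $p_\eta$ and hence which branch of the Bernoulli number is the relevant one. A secondary point requiring care is that $\CM$ as defined is a $\BZ$-module, not obviously a $\BZ[\chi]$-module until one notes $\frac{g(\chi^{-1})}{L}\BZ[\chi]$ is one and $\CR_{\Gamma_1(N)}(E_{M,L,\chi})$ is stable under the relevant Galois action; but once phrased correctly, $\CM[\eta,\tfrac1{p_\eta}] = \frac{g(\chi^{-1})}{L}\BZ[\chi,\eta,\tfrac1{p_\eta}] + \CR_{\Gamma_1(N)}(E_{M,L,\chi})[\eta,\tfrac1{p_\eta}]$ and the verification of (St3) goes through as above.
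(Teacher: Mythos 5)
Your proposal matches the paper's proof: both inclusions are obtained from Stevens' criterion (Theorem 1.3(b) of \cite{St2}) applied to the explicit formula for $\Lambda_{\pm}(E_{M,L,\chi},\eta,1)$ computed just before the lemma, with the integrality and non-vanishing statements for the half-Bernoulli numbers (Theorem 4.2 of \cite{St2}) supplying, respectively, the containment $\CP_{\Gamma_1(N)}(E_{M,L,\chi})\subseteq\frac{g(\chi^{-1})}{L}\BZ[\chi]+\CR_{\Gamma_1(N)}(E_{M,L,\chi})$ and the reverse one. The only refinement to make is in the reverse inclusion: a single $\eta$ making the whole correction factor $R_\eta$ a global unit generally does not exist (factors such as $1-\chi(p)\tilde{\eta}(p)$ have non-unit norm), so one must argue prime by prime --- for each prime $\fP$ of $\BZ[\chi]$ choose $\eta\in\fX^{\infty}_{N}$ so that the two finite Euler products and both half-Bernoulli numbers are $\fP$-units --- which is exactly how the paper phrases it.
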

\begin{proof}
  It is clear from Theorem 1.3,(b) and Theorem 4.2,(b) of \cite{St2}, together with the above discussion, that $\CP_{\Gamma_1(N)}(E_{M,L,\chi})$ is contained in $\frac{g(\chi^{-1})}{L}\cdot\BZ[\chi]+\CR_{\Gamma_1(N)}(E_{M,L,\chi})$ as sub-$\BZ[\chi]$-modules in $\BQ[\chi]$.

  On the other hand, for any prime $\fP$ in $\BZ[\chi]$, we may choose $\eta\in\fX^{\infty}_{S}$ such that $\prod_{p\mid(L/\ff_\chi)}(p-{\chi(p)^{-1}\tilde{\eta}(p)}),\prod_{p\mid(M/\ff_\chi)}(1-{\chi(p)\tilde{\eta}(p)}),
  \frac{1}{2}B_{1,\overline{\chi^{-1}\tilde{\eta}}}$ and $\frac{1}{2}B_{1\chi\tilde{\eta}}$ are all $\fP$-units by Theorem 4.2,(a) and (c) of \cite{St2}. Then we find by Theorem 1.3,(b) of \cite{St2} again that $\CP_{\Gamma_1(N)}(E_{M,L,\chi})=\frac{g(\chi^{-1})}{L}\cdot\BZ[\chi]+\CR_{\Gamma_1(N)}(E_{M,L,\chi})$.
\end{proof}

\begin{cor}\label{4.7}
  Notations are as above, then we have
  \[C_{\Gamma_0(N)}(E_{M,L,\chi})\otimes_{\BZ}\BZ[\frac{1}{6N}]\simeq\BZ[\frac{1}{6N},\chi]/\fa_{M,L,\chi},\]
  where $\fa_{M,L,\chi}\subseteq\BZ[1/6N,\chi]$ is the ideal generated by the constant terms of $E_{M,L\chi}$.
\end{cor}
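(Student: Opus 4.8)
The plan is to reduce the statement, via the exact sequence of \S2
\[0\to\Sigma_N\cap C_{\Gamma_0(N)}(E_{M,L,\chi})\to C_{\Gamma_0(N)}(E_{M,L,\chi})\to A^{(s)}_{\Gamma_0(N)}(E_{M,L,\chi})\to0,\]
to two assertions: (i) that the Shimura term $\Sigma_N\cap C_{\Gamma_0(N)}(E_{M,L,\chi})$ has order divisible only by primes dividing $6N$, and (ii) that $A^{(s)}_{\Gamma_0(N)}(E_{M,L,\chi})\otimes\BZ[\tfrac{1}{6N}]\simeq\BZ[\tfrac{1}{6N},\chi]/\fa_{M,L,\chi}$. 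Granting both, and using that $\BZ[\tfrac{1}{6N}]$ is $\BZ$-flat, the corollary follows immediately.

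For (ii) I would first identify $\CR_{\Gamma_0(N)}(E_{M,L,\chi})\otimes\BZ[\tfrac{1}{6N}]$ with $\fa_{M,L,\chi}$. The ramification indices $e_x$ occurring in $\delta_{\Gamma_0(N)}(E_{M,L,\chi})$ are cusp widths, hence divide $N$ and become units, so $\CR_{\Gamma_0(N)}(E_{M,L,\chi})\otimes\BZ[\tfrac{1}{6N}]$ is the $\BZ[\tfrac{1}{6N}]$-module spanned by the constant terms $a_0(E_{M,L,\chi};[x])$. By the computation of the constant terms of $E_\chi$ at the end of \S5 (the formula $a_0(E_\chi;[\tfrac{dx}{\ff^2_\chi}])=\chi^{-1}(x)\cdot n_\chi$, zero unless $d=\ff_\chi$) together with the explicit effect of $[\bar{M}]^+_\chi$ and $[\bar{L}]^-_\chi$ on constant terms, each $a_0(E_{M,L,\chi};[x])$ is $n_\chi$ times a value of $\chi$ times a product of Euler-type factors; since the values of $\chi$ span $\BZ[\chi]$ over $\BZ$, this $\BZ[\tfrac{1}{6N}]$-module is already the $\BZ[\tfrac{1}{6N},\chi]$-ideal $\fa_{M,L,\chi}$, and since $n_\chi$ and the Euler factors are supported on primes dividing $N$ (using $\ff^2_\chi\mid N$ from Proposition~\ref{bound for conductor}, and that Gauss sums and Bernoulli denominators are supported on primes dividing $6N$) one has $\fa_{M,L,\chi}\subseteq\BZ[\tfrac{1}{6N},\chi]$. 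The same description gives $\CR_{\Gamma_1(N)}(E_{M,L,\chi})\otimes\BZ[\tfrac{1}{6N}]=\fa_{M,L,\chi}$, since the constant term of $\pi^*_NE_{M,L,\chi}$ at a cusp of $X_1(N)$ equals that of $E_{M,L,\chi}$ at the cusp below. Now Lemma~\ref{4.6} gives $\CP_{\Gamma_1(N)}(E_{M,L,\chi})=\tfrac{g(\chi^{-1})}{L}\BZ[\chi]+\CR_{\Gamma_1(N)}(E_{M,L,\chi})$, and $\tfrac{g(\chi^{-1})}{L}$ generates the unit ideal of $\BZ[\tfrac{1}{6N},\chi]$ because $L=\ff_\chi\bar{L}\mid N$ and $g(\chi^{-1})$ is supported on primes dividing $\ff_\chi\mid N$ (cf. $g(\chi)g(\chi^{-1})=\chi^{-1}(-1)\ff_\chi$). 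Hence after $\otimes\BZ[\tfrac{1}{6N}]$ the numerator $\CP_{\Gamma_1(N)}(E_{M,L,\chi})+\CR_{\Gamma_0(N)}(E_{M,L,\chi})$ is all of $\BZ[\tfrac{1}{6N},\chi]$ while the denominator is $\fa_{M,L,\chi}$, which is (ii).

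For (i), fix a prime $\ell\nmid 6N$; then $\ell\nmid N$, and it suffices to show $\bigl(\Sigma_N\cap C_{\Gamma_0(N)}(E_{M,L,\chi})\bigr)[\ell]=0$. Since $C_{\Gamma_0(N)}(E_{M,L,\chi})\subseteq\CC_N\subseteq J_0(N)(\BQ(\mu_{\wt{N}}))$ is generated by classes of degree-zero divisors supported on cusps, the inertia subgroup $I_\ell\subseteq\Gal(\bar{\BQ}/\BQ)$ acts trivially on its $\ell$-torsion (as $\ell\nmid\wt{N}$). On the other hand $\Sigma_N$ is finite of multiplicative type and extends to a multiplicative-type group scheme over $\BZ[\tfrac{1}{N}]$, so $\Sigma_N[\ell]\cong\mu_\ell\otimes_{\BF_\ell}A$ as Galois modules with $A$ unramified at $\ell$; thus $I_\ell$ acts on $\Sigma_N[\ell]$ through the nontrivial mod-$\ell$ cyclotomic character and $\Sigma_N[\ell]^{I_\ell}=0$. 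Therefore $\bigl(\Sigma_N\cap C_{\Gamma_0(N)}(E_{M,L,\chi})\bigr)[\ell]\subseteq\Sigma_N[\ell]^{I_\ell}=0$, which is (i).

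The main obstacle is step (i): one must know that the cuspidal group attached to $E_{M,L,\chi}$ and the Shimura subgroup become disjoint away from $6N$. The clean route is the ramification dichotomy above, which hinges on the bound $\ff^2_\chi\mid N$ (so that $\chi$, hence the field of definition of $C_{\Gamma_0(N)}(E_{M,L,\chi})$, is unramified outside $N$) and on $\Sigma_N$ being of multiplicative type. The remaining ingredients of step (ii) — that cusp widths, Gauss sums and Bernoulli denominators are supported on primes dividing $6N$, and that the values of $\chi$ span $\BZ[\chi]$ — are routine. Finally, since $E_{M,L,\chi}$ is an eigenform, all the terms above carry compatible actions of $\BT_0(N)$ as quotients of $\BZ[\tfrac{1}{6N},\chi]$, so the displayed isomorphism is one of $\BT_0(N)$-modules.
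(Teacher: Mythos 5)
Your proposal is correct, and the computational half (your step (ii)) is essentially the paper's own argument: identify $\CR_{\Gamma_0(N)}(E_{M,L,\chi})\otimes\BZ[\tfrac{1}{6N}]$ with $\fa_{M,L,\chi}$ (cusp widths dividing $N$, values of $\chi$ spanning $\BZ[\chi]$), invoke Lemma~\ref{4.6} and the fact that $g(\chi^{-1})/L$ is a unit in $\BZ[\tfrac{1}{6N},\chi]$ (since $g(\chi)g(\chi^{-1})=\chi(-1)\ff_\chi$ and $L\mid N$), and conclude via the exact sequence; the paper phrases the last step through the Pontryagin duality $C^{(s)}\simeq\Hom(A^{(s)},\BQ/\BZ)$ and the self-duality of $\BZ[\tfrac{1}{6N},\chi]/\fa_{M,L,\chi}$, whereas you read the isomorphism off the exact sequence directly, which is cleaner and equivalent. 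Where you genuinely diverge is step (i), the vanishing of $\bigl(\Sigma_N\cap C_{\Gamma_0(N)}(E_{M,L,\chi})\bigr)\otimes\BZ[\tfrac{1}{6N}]$. The paper splits into cases: for $\chi=1$ the cuspidal group is $\BQ$-rational, so the intersection is a $\BQ$-rational subgroup of a $\mu$-type group and lands in $\mu_2$; for $\chi\neq1$ it uses that $T^{\Gamma_0(N)}_p$ annihilates $C_{\Gamma_0(N)}(E_{M,L,\chi})$ for $p\mid\ff_\chi$ while acting as multiplication by $p$ on $\Sigma_N$, so the intersection is killed by $p\mid N$. Your argument instead compares ramification at a prime $\ell\nmid 6N$: the cuspidal group lives in $J_0(N)(\BQ(\mu_{\wt N}))$ and is fixed by $I_\ell$, while $\Sigma_N[\ell]$ carries the nontrivial mod-$\ell$ cyclotomic action of $I_\ell$ and has no inertia invariants. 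This is uniform in $\chi$ and avoids the Hecke-theoretic input ($T_p=p$ on $\Sigma_N$), at the cost of using slightly more about the Galois module structure of $\Sigma_N$ than the paper literally states ("of multiplicative type as a $G_\BQ$-module"); with the standard description of $\Sigma_N$ as a product of $\mu_{n_i}$'s this is fine, and both routes give the corollary.
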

\begin{proof}
If $\chi=1$ so that $C_{\Gamma_0(N)}(E_{M,L,\chi})$ is $\BQ$-rational, then $C_{\Gamma_0(N)}(E_{M,L,\chi})\bigcap\sum_N$ is both $\BQ$-rational and of multiplicative type, and is hence contained in $\mu_2$. On the other hand, if $\chi\neq1$, then $C_{\Gamma_0(N)}(E_{M,L,\chi})$ is annihilated by $T^{\Gamma_0(N)}_p$ for any $p\mid\ff_\chi$; since $T^{\Gamma_0(N)}_p$ acts as multiplication by $p$ on $\sum_N$, we find that $C_{\Gamma_0(N)}(E_{M,L,\chi})\bigcap\sum_N$ must be annihilated by $p$. Thus $(C_{\Gamma_0(N)}(E_{M,L,\chi})\bigcap\sum_N)\otimes_{\BZ}\BZ[1/6N]$ is always zero, so that $C_{\Gamma_0(N)}(E_{M,L,\chi}\otimes_{\BZ}\BZ[1/6N]$ is isomorphic to $C^{(s)}_{\Gamma_0(N)}(E_{M,L,\chi})\otimes_{\BZ}\BZ[1/6N]$. Since
\begin{align*}
  \CR_{\Gamma_1(N)}(E_{M,L,\chi})\otimes_{\BZ}\BZ[\frac{1}{6N}]&\subseteq  \CR_{\Gamma_1(N)}(E_{M,L,\chi})\otimes_{\BZ}\BZ[\frac{1}{6N}]\\
&=\fa_{M,L,\chi},
\end{align*}
it follows that
\begin{align*}
  C_{\Gamma_0(N)}(E_{M,L,\chi})\otimes_{\BZ}\BZ[{1}/{6N}]&\simeq C^{(s)}_{\Gamma_0(N)}(E_{M,L,\chi})\otimes_{\BZ}\BZ[1/6N]\\
  &\simeq\Hom_{\BZ}(A^{(s)}_{\Gamma_0(N)}(E_{M,L,\chi}),{\BQ}/{\BZ})\otimes_{\BZ}\BZ[{1}/{6N}]\\
  &\simeq\Hom_{\BZ[1/6N]}(\BZ[{1}/{6N},\chi]/\fa_{M,L,\chi},{\BQ}/{\BZ[{1}/{6N}]})\\
  &\simeq\frac{\Hom_{\BZ[1/6N]}(\fa_{M,L,\chi},{\BZ[{1}/{6N}]})}{\Hom_{\BZ[1/6N]}(\BZ[1/6N,\chi],{\BZ[{1}/{6N}]})}\\
  &\simeq\BZ[{1}/{6N},\chi]/\fa_{M,L,\chi},
\end{align*}
which completes the proof of the assertion.
\end{proof}

\begin{prop}\label{index}
Notations ate as above, then $\BT_0(N)[\chi]/I^{(N)}_{M,L,\chi}$ is finite. Moreover, the action of $\BT_0(N)[\chi]$ on $C_{\Gamma_0(N)}(E_{M,L,\chi})$ induces an isomorphism
\begin{align*}
  \BT_0(N)[\chi,\frac{1}{6N}]/I^{(N)}_{M,L,\chi}\simeq C_{\Gamma_0(N)}(E_{M,L,\chi})\otimes\BZ[\chi,\frac{1}{6N}].
\end{align*}
\end{prop}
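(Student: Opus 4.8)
Throughout write $\CT:=\CT_0(N)[\chi]$ and $\BT:=\BT_0(N)[\chi]$, let $r\colon\CT\twoheadrightarrow\BT$ be restriction to cusp forms, put $E:=E_{M,L,\chi}$, and let $\lambda_\ell\in\BZ[\chi]$ be its $\CT_\ell$-eigenvalue from Proposition~\ref{Hecke eigen}. The line $\BC\cdot E$ is $\CT$-stable, with $\CT$ acting through the ring homomorphism $\CT\twoheadrightarrow\BZ[\chi]$ sending $\CT_\ell\mapsto\lambda_\ell$ (and acting on $\BZ[\chi]$ by its natural multiplication), so $\Ann_\CT(E)$ is the kernel of this homomorphism and $\CT/\Ann_\CT(E)\cong\BZ[\chi]$. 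Since $I^{(N)}_{M,L,\chi}=r(\Ann_\CT(E))$,
\[
\BT/I^{(N)}_{M,L,\chi}\;\cong\;\CT/\bigl(\Ann_\CT(E)+\ker r\bigr)\;\cong\;\BZ[\chi]/\eta ,
\]
where $\eta$ is the image of $\ker r$ in $\BZ[\chi]$. For the first assertion, since $\BT$ is finitely generated over $\BZ$ it suffices to see that $\BT/I^{(N)}_{M,L,\chi}$ is $\BZ$-torsion, i.e.\ that $\eta\neq0$. Tensoring with $\BQ$, the ring $\CT\otimes\BQ$ is the product of the number fields cut out by the Galois orbits of normalized eigenforms in $\CM_2(\Gamma_0(N),\BC)$, $\BT\otimes\BQ$ is the product of the factors arising from cusp forms, and $E$ contributes an Eisenstein factor (its eigenvalues $\chi^{-1}(\ell)+\ell\cdot\chi(\ell)$ for $\ell\nmid N$ violate the Ramanujan bound, so $E$ is not cuspidal). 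Hence $\Ann_\CT(E)\otimes\BQ$, which omits only the $E$-factor, surjects onto $\BT\otimes\BQ$, so $I^{(N)}_{M,L,\chi}\otimes\BQ=\BT\otimes\BQ$, $\eta\otimes\BQ=\BQ(\chi)$, and $\BT/I^{(N)}_{M,L,\chi}$ is finite.

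For the isomorphism, set $C:=C_{\Gamma_0(N)}(E)$. By the Remark after Definition~\ref{def}, $C$ is annihilated by $I^{(N)}_{M,L,\chi}$, and by Corollary~\ref{4.7} we have $C\otimes\BZ[\chi,\frac{1}{6N}]\cong\BZ[\frac{1}{6N},\chi]/\fa_{M,L,\chi}$ as $\BZ[\frac{1}{6N},\chi]$-modules, the right-hand side being cyclic. Letting $\BT$ act on a generator produces a surjection of $\BT$-modules
\[
\phi\colon\;\BT_0(N)[\chi,\tfrac{1}{6N}]/I^{(N)}_{M,L,\chi}\;\twoheadrightarrow\;C\otimes\BZ[\chi,\tfrac{1}{6N}]\;\cong\;\BZ[\tfrac{1}{6N},\chi]/\fa_{M,L,\chi},\qquad 1\mapsto\bar 1,
\]
which is visibly a homomorphism of $\BZ[\frac{1}{6N},\chi]$-algebras (the target is cyclic over $\BZ[\frac{1}{6N},\chi]$ and $\BT_0(N)[\chi,\frac{1}{6N}]$ is commutative). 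By the first paragraph the source of $\phi$ is $\BZ[\frac{1}{6N},\chi]/\bar\eta$, with $\bar\eta:=\eta\,\BZ[\frac{1}{6N},\chi]$; thus $\phi$ is a surjection $\BZ[\frac{1}{6N},\chi]/\bar\eta\twoheadrightarrow\BZ[\frac{1}{6N},\chi]/\fa_{M,L,\chi}$ acting as the identity on the images of $\BZ[\frac{1}{6N},\chi]$, which forces $\bar\eta\subseteq\fa_{M,L,\chi}$ and shows that $\phi$ is an isomorphism exactly when $\bar\eta=\fa_{M,L,\chi}$. So the statement reduces to the inclusion $\fa_{M,L,\chi}\subseteq\bar\eta$.

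This last point is a congruence-module computation: $\BZ[\frac{1}{6N},\chi]/\bar\eta$ is the congruence module of $E$ relative to cusp forms, so for a prime $\fP\nmid 6N$ one has $v_\fP(\bar\eta)\geq k$ exactly when the system of eigenvalues $\{\lambda_\ell\bmod\fP^k\}$ is realized by a cusp form in $S_2(\Gamma_0(N))$ modulo $\fP^k$. If $\fP$ divides every constant term $a_0(E;[x])$ at the cusps of $X_0(N)$ listed in Lemma~\ref{representative}, then $E\bmod\fP$ lies in $S_2(\Gamma_0(N),\overline{\BF}_q)$; since $q\nmid N$ the curve $X_0(N)$ has good reduction at $q$, so $S_2(\Gamma_0(N),\BZ_q)\otimes\overline{\BF}_q\xrightarrow{\ \sim\ }S_2(\Gamma_0(N),\overline{\BF}_q)$ and the system $\{\lambda_\ell\bmod\fP\}$ lifts to the reduction of a characteristic-zero cusp form, whence $\fP\supseteq\bar\eta$. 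Upgrading this to the statement with exact $\fP$-multiplicities, via the Manin--Drinfeld identification of the Eisenstein part of $\CM_2(\Gamma_0(N))$ with $\Div^0\bigl(cusp(\Gamma_0(N))\bigr)$ as Hecke modules (again away from $6N$), yields $v_\fP(\bar\eta)\geq v_\fP(\fa_{M,L,\chi})$ for every $\fP$, hence $\fa_{M,L,\chi}\subseteq\bar\eta$ and finally $\bar\eta=\fa_{M,L,\chi}$.

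The heart of the matter, and the expected main obstacle, is exactly this identity $\bar\eta=\fa_{M,L,\chi}$: the inclusion $\bar\eta\subseteq\fa_{M,L,\chi}$ is formal once $\phi$ is known to be surjective, but the reverse inclusion---that the Eisenstein ideal attached to $E$ is as large as its constant terms force it to be, and with the correct multiplicities---is where the real work lies, and it is here that the hypothesis $q\nmid 6N$ is indispensable, both for running the mod-$q$ comparison of $E$ with cusp forms and for neutralizing the Shimura-subgroup and denominator contributions inherent in Stevens' $\Gamma_0(N)$-level theory.
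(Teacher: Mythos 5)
Your reduction of the statement to the ideal identity $\bar\eta=\fa_{M,L,\chi}$ is sound and follows the paper's route: the finiteness argument via the Ramanujan bound is the paper's, and the ``formal'' inclusion $\bar\eta\subseteq\fa_{M,L,\chi}$ is obtained, as in the paper, from the cyclicity of $C_{\Gamma_0(N)}(E_{M,L,\chi})$ over $\BT_0(N)[\chi]/I^{(N)}_{M,L,\chi}$ together with Corollary~\ref{4.7}. The gap is in the last step, which is indeed the heart of the matter. The argument you give for $\fa_{M,L,\chi}\subseteq\bar\eta$ runs in the wrong logical direction twice over. First, the implication you prove is ``if $\fP$ divides every constant term of $E$, then $E\bmod\fP$ is a cusp form, whose eigenvalue system lifts, whence $\bar\eta\subseteq\fP$''; quantified over $\fP$ and upgraded to multiplicities this reads $v_\fP(\fa_{M,L,\chi})\geq k\Rightarrow v_\fP(\bar\eta)\geq k$, i.e.\ $v_\fP(\bar\eta)\geq v_\fP(\fa_{M,L,\chi})$ for all $\fP$. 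Second, in a Dedekind domain $\fa\subseteq\bar\eta$ is equivalent to $v_\fP(\bar\eta)\leq v_\fP(\fa)$ for all $\fP$, so the inequality you derive is equivalent to $\bar\eta\subseteq\fa_{M,L,\chi}$ --- the inclusion you already had formally --- and your final ``hence $\fa_{M,L,\chi}\subseteq\bar\eta$'' does not follow. Intuitively, your Deligne--Serre/Katz-type step (``small constant terms produce congruences with cusp forms'') only bounds the congruence module from \emph{below}; what is needed here is the \emph{upper} bound: congruences with cusp forms can be no deeper than the constant terms allow.

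The paper's proof of the missing inclusion goes as follows. The ring homomorphism $\BT_0(N)\rightarrow\BT_0(N)[\chi]/I^{(N)}_{M,L,\chi}\cong\BZ[\chi,\frac{1}{6N}]/\fb_{M,L,\chi}$ corresponds, by the duality between the Hecke algebra and cusp forms with coefficients in $\BZ[\frac{1}{6N}]$-algebras, to a normalized cuspidal eigenform $\theta\in S^B_2(\Gamma_0(N),\BZ[\chi,\frac{1}{6N}]/\fb_{M,L,\chi})$. Since $E_{M,L,\chi}\bmod\fb_{M,L,\chi}$ is also normalized with the same Hecke eigenvalues, the $\fq$-expansion principle (Proposition 1.2.10 of \cite{Oh}) forces $\theta=E_{M,L,\chi}\bmod\fb_{M,L,\chi}$; in particular $E_{M,L,\chi}$ becomes a cusp form modulo $\fb_{M,L,\chi}$, so all its constant terms lie in $\fb_{M,L,\chi}$, which is exactly $\fa_{M,L,\chi}\subseteq\fb_{M,L,\chi}\otimes_\BZ\BZ[\frac{1}{6N}]$. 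Note that this argument treats the whole ideal $\fb_{M,L,\chi}$ at once and needs no prime-by-prime multiplicity bookkeeping; your proposed ``upgrade via Manin--Drinfeld'' is, by contrast, left entirely unspecified and, as explained above, is aimed at the wrong inclusion in any case.
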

\begin{proof}
  We first prove the finiteness of $\BT_0(N)[\chi]/I^{(N)}_{M,L,\chi}$. It is clear that the inclusion $\BZ[\chi]\subset\BT_0(N)[\chi]$ induces a surjection $\varphi:\BZ[\chi]\twoheadrightarrow\BT_0(N)[\chi]/I^{(N)}_{M,L,\chi}$. Suppose $\ker({\varphi})=0$ so that $\varphi$ is an isomorphism, then the composition $\BT_0(N)\rightarrow\BT_0(N)[\chi]/I^{(N)}_{M,L,\chi}\simeq\BZ[\chi]\hookrightarrow\BC$ corresponds to a normalized cuspidal eigenform $g=\sum^{\infty}_{n=1}a_n(g)\cdot\fq^n\in S_2(\Gamma_0(N),\BC)$ such that $a_\ell(g)=\chi(\ell)^{-1}+\ell\cdot\chi(\ell)$ for any prime $\ell\nmid N$. But this contradicts to the Ramanujan bound, so $\ker(\varphi)$ must be a non-zero ideal in $\BZ[\chi]$, which proves the assertion.

  Let $\fb_{M,L,\chi}=\ker(\varphi)$ so that $\BZ[\chi]/\fb_{M,L,\chi}\simeq\BT_0(N)[\chi]/I^{(N)}_{M,L,\chi}$. From Theorem 3.2.4 of \cite{St}, $C_{\Gamma_0(N)}(E_{M,L,\chi})$ is s cyclic $\BT_0(N)[\chi]/I^{(N)}_{M,L,\chi}$-module, thus we find by Corollary~\ref{4.7} that
  \[\fb_{M,L,\chi}\otimes_{\BZ}\BZ[{\frac{1}{6N}}]\subseteq\fa_{M,L,\chi}.\]
  On the other hand, the composition $\BT_0(N)\rightarrow\BT_0(N)[\chi]/I^{(N)}_{M,L,\chi}\rightarrow\BZ[\chi,1/6N]/\fb_{M,L,\chi}$ gives a normalized eigenform $\theta\in S^B_2(\Gamma_0(N),\BZ[\chi,1/6N]/\fb_{M,L,\chi})$. Since $E_{M,L,\chi}\pmod{\fb_{M,L,\chi}}\in M^B_2(\Gamma_0(N),\BZ[\chi,1/6N]/\fb_{M,L,\chi})$ has the same eigenvalues with $\theta$, it follows from the $\fq$-expansion principle (see Proposition 1.2.10 of \cite{Oh}) that $\theta=E_{M,L,\chi}\pmod{\fb_{M,L,\chi}}$. In particular, we find that $E_{M,L,\chi}$ is a cusp form when modulo $\fb_{M,L,\chi}$, so all of its constant terms must be contained in $\fb_{M,l,\chi}$. It follows that
  \[\fa_{M,L,\chi}\subseteq\fb_{M,L,\chi}\otimes_{\BZ}\BZ[{\frac{1}{6N}}],\]
  which completes the proof of the proposition.
\end{proof}

\textbf{Proof of Theorem~\ref{main}: }It is clear that (1) implies (2). To show that (2) implies (3), let $V=J_0(N)[\fm]$ and suppose that $\fm$ is not Eisenstein. Then $\rho_\fm$ is irreducible and we find by Theorem 5.2,(b) of \cite{Ri2} that $V$ is of dimension-$2$ over ${\kappa(\fm)}$, which therefore provides a model for $\rho_\fm$. Since $0\neq\CJ_N[\fm]\subseteq V$, we have $V=\CJ_N[\fm]$. In particular, $\rho_\fm$ factors through the abelian group $\Gal(\BQ_N/\BQ)$. However, since $\fm\nmid2$ and $\rho_\fm$ is odd, $\rho_\fm$ is geometric irreducible and hence can not factors through an abelian group. This contradiction shows that $\fm$ must be Eisenstein.

It remains to prove that (3) implies (1). So let $\fm$ be a maximal Eisenstein ideal in $\BT_0(N)$. Then it is enough to show that $(\CC_N)_{\fm}$ is non-zero, where $\BT_0(N)_\fm$ is the completion of $\BT_0(N)$ at $\fm$. Let the notations be the same as in Eqs.~(\ref{eq4.5})-(\ref{eq4.6}). In particular, there is an  Eisenstein series $E_{M,L,\chi}$ associated to $\fm$. Since $\chi$ is the Teichmuller lifting of $\bar{\chi}$, the values of $\chi$ are all contained in $\BT_0(N)$ so that we have a commutative diagram
\[\xymatrix{
  \BT_0(N)[\chi] \ar[dr] \ar[r]
                & \BT_0(N)_{\fm} \ar[d]  \\
                & \kappa(\fm)             }
\]
Since the generators of $I^{(N)}_{M,L,\chi}$ are all mapped to zero in $\kappa(\fm)$, it follows that $(\BT_0(N)[\chi]/I^{(N)}_{M,L,\chi})_\fm\neq0$. Therefore we find by Proposition~\ref{index} that
\[(\CC_N)_\fm\supseteq(C_{\Gamma_0(N)}(E_{M,L,\chi}))_{\fm}\neq0,\]
which completes the proof.


\begin{thebibliography}{10}
\bibitem{BD} N. Billerey and L. V. Dieulefait, Explicit large image theorems for modular forms. Journal of the London Mathematical Society, 2014, 89(2):499每523

\bibitem{DDT} H. Darmon, F. Dimond, R. Taylor, Fermat's last theorem. Elliptic Curves, Modular Forms, and Fermat's Last Theorem. edited by
John H. Coates and Shing-Tung Yau. International Press Somerville, Massachusetts, U.S.A

\bibitem{K} N. Katz, A result on modular forms in characteristic $p$. Modular functions of one variable V. Lecture Notes in Mathematics, Vol(601), Springer, Berlin, 1977.

\bibitem{L}  S. Ling, On the $\BQ$-rational cuspidal subgroup and the component group of $J_0(p^r)$. Israel Journal of Mathematics,
1997, 99(1):29-54.

\bibitem{M} B. Mazur, Modular curves and the eisenstein ideal. Publications Math谷matiques de l'Institut des Hautes $\acute{e}$tudes Scientifiques, 1977, 47(1):33-186

\bibitem{Oh} M. Ohta, Eisenstein ideals and the rational torsion subgroups of Modular Jacobian Varieties II. Tokyo J. Math. Vol. 37, NO. 2, 2014

\bibitem{Ri} K. Ribet, Mod p Hecke operators and congruences between modular forms. Inventiones Mathematicae, 1983, 71(1):193-205

\bibitem{Ri2} K. Ribet, On modular representations of $\Gal(\bar{\BQ}/\BQ)$ arising from modular forms. Inventiones mathematicae, 1990, 100(1):431每476

\bibitem{Ri3} K. Ribet, Congruence relations between modular forms. Proceeding of International Congress of Mathematics 17:503每51

\bibitem{Sh} G. Shimura, Introduction to the arithmetic theory of automorphic functions. Publ. Math. Soc. Japan 11 (Iwanami Shorten/Princeton,1971)

\bibitem{St} G. Stevens, Arithmetic of modular curves, Progress in Mathematics 20, Birkh$\ddot{a}$user, 1982.

\bibitem{St2} G. Stevens, The Cuspidal Group and Special Values of L-Functions, Transactions of the American Mathematical Society, 1985, 291(2):519-550

\bibitem{Wiles} A. Wiles, Modular elliptic curves and Fermat's last theorem. Ann. of Math. (2) 141 (1995), no. 3, 443每551.
\end{thebibliography}
\end{document}